\documentclass[reqno]{amsart}
\usepackage[utf8]{inputenc}
\usepackage{amsmath,amsfonts,amssymb,amsthm,dsfont, bm}
\usepackage{array}
\usepackage{enumitem}

\usepackage{float}
\usepackage[usenames,dvipsnames]{xcolor}
\usepackage[colorlinks=true]{hyperref} 
\usepackage{verbatim}
\usepackage{graphicx,overpic}
\usepackage{mathabx}
\usepackage{scalerel}
\usepackage{booktabs}

\hypersetup{linkcolor=Violet,citecolor=PineGreen}
\newtheorem{thm}{Theorem}[section]

\newtheorem{prop}[thm]{Proposition}

\theoremstyle{definition}
\newtheorem{defn}[thm]{Definition}
\newtheorem{exmpl}[thm]{Example}
\newtheorem{rmk}[thm]{Remark}
\numberwithin{equation}{section}

\newcommand{\A}{\mathcal{A}}

\newcommand{\N}{\mathbb{N}}
\newcommand{\R}{\mathbb{R}}

\newcommand{\ep}{\varepsilon}

\newcommand\note[1]{\marginpar{\flushleft\sffamily\tiny #1}}

\newcommand{\PreserveBackslash}[1]{\let\temp=\\#1\let\\=\temp}
\newcolumntype{C}[1]{>{\PreserveBackslash\centering}p{#1}}
\newcolumntype{R}[1]{>{\PreserveBackslash\raggedleft}p{#1}}
\newcolumntype{L}[1]{>{\PreserveBackslash\raggedright}p{#1}}

\newcommand{\nin}{{n\in\N}}

\newcommand{\dist}{\mathrm{dist}}

\newcommand{\BA}{\mathcal{B(A)}}

\newcommand{\BS}{S_\mathcal{B(A)}}

\newcommand\norm[1]{\left\lVert#1\right\rVert}
\DeclareMathOperator*{\argmax}{argmax}
\graphicspath{{images/}{../images/}}

\makeatletter
\@namedef{subjclassname@2020}{%
  \textup{2020} Mathematics Subject Classification}
\makeatother

\usepackage{subcaption}
\begin{document}
\title[Resilience of dynamical systems]
{Resilience of dynamical systems}
\author[H.~Krakovská]{Hana Krakovská}
\author[C.~Kuehn]{Christian Kuehn}
\author[I.P.~Longo]{Iacopo P. Longo}

\email[Hana Krakovská]{hana.krakovska@meduniwien.ac.at}
\email[Christian Kuehn]{ckuehn@ma.tum.de}
\email[Iacopo P. Longo]{longoi@ma.tum.de}

\address[H.~Krakovská]{Slovak Academy of Sciences, Institute of Measurement Science, Department of Theoretical Methods,
Dúbravská cesta 9, 
841 04 Bratislava 4, 
Slovakia.\vspace{0.1cm}
Medical University of Vienna, Section for Science of Complex Systems, Spitalgasse 23, 1090 Vienna, Austria.}

\address[C. Kuehn, I.P. Longo]{Technische Universit\"at M\"unchen,
Forschungseinheit Dynamics,
Zentrum Mathematik, M8,
Boltzmannstraße 3,
85748 Garching bei M\"unchen, Germany.}

\thanks{HK was supported by the Slovak Grant Agency for Science under the grant No.2/0023/22. CK was partly supported by a Lichtenberg Professorship of the VolkswagenStiftung. CK also acknowledges partial support of the EU within the TiPES project funded the European Unions Horizon 2020 research and innovation programme under grant agreement No. 820970. IPL was partly supported  by European Union’s Horizon 2020 research and innovation programme under the Marie Skłodowska-Curie grant agreement No 754462, by MICIIN/FEDER project RTI2018-096523-B-100, by TUM International Graduate
School of Science and Engineering (IGSSE) and by the University of Valladolid under project PIP-TCESC-2020.
}

\subjclass[2020]{37C20 34D10 34D05 37G35 37N99}
\date{}
\begin{abstract}
Stability is among the most important concepts in dynamical systems. Local stability is well-studied, whereas determining  the ``global stability'' of a nonlinear system is very challenging. Over the last few decades, many different ideas have been developed to address this issue, primarily driven by concrete applications. In particular, several disciplines suggested a web of concepts under the headline ``resilience''. Unfortunately, there are many different variants and explanations of resilience, and often the definitions are left relatively vague, sometimes even deliberately. Yet, to allow for a structural development of a mathematical theory of resilience that can be used across different areas, one has to ensure precise starting definitions and provide a mathematical comparison of different resilience measures. In this work, we provide a systematic review of the most relevant indicators of resilience in the context of continuous dynamical systems, grouped according to their mathematical features. The indicators are also generalized to be applicable to any attractor. These steps are important to ensure a more reliable, quantitatively comparable and reproducible study of resilience in dynamical systems. Furthermore, we also develop a new concept of resilience against certain nonautonomous perturbations to demonstrate how one can naturally extend our framework. All the indicators are finally compared via the analysis of a classic scalar model from population dynamics to show that direct quantitative application-based comparisons are an immediate consequence of a detailed mathematical analysis.
\end{abstract}
\keywords{Resilience, dynamical systems, attractors, stability.}
\maketitle

\section{Introduction}\label{secintro}
In 1973, Holling~\cite{holling1973} prompted the mathematical community in ecology to investigate a new concept of  ``nonlocal'' stability to capture the ``persistence of relationships within a system, [...] the ability to absorb changes of state variables, driving variables, and parameters, and still persist'', and by doing so, complement the classic Lyapunov stability. He named this property \emph{resilience}. The theory of dynamical systems still proved to be the most promising theoretical framework to  treat the question rigorously (see Gruemm~\cite{gruemm1976}). Nevertheless, the newly introduced concept elicited a tremendous, and yet mostly disorganized, research effort  in several areas of applied science. The result is a plethora of different indicators of resilience, in setups which are often not directly comparable  (see Baggio et al.~\cite{baggio2015},  Brand and Jax~\cite{brand2007}, Carpenter et al.~\cite{carpenter2001}, Donohue et al.~\cite{donohue2016}, Grimm and Wissel~\cite{grimm1997}, K\'{e}fi et al.~\cite{kefi2019}, Meyer~\cite{meyer2016}, Van Meerbeek et al.~\cite{vanmeerbek2021} and Walker and Salt~\cite{walker2012}).\par\smallskip 

It is possible to categorize most of the available indicators within two main branches: \emph{engineering resilience}  and \emph{ecological resilience} (Peterson~\cite{peterson1998}).  The former aims at answering the question ``how long will it take for a system that suffers a perturbation to readjust to its original state?'' and can be traced back to Maynard Smith~\cite{smith1968mathematical} and to May~\cite{may2019}. The latter aims at answering the question ``which are the features of the maximal perturbations that a system can endeavour and still return to its original state?'' and it is directly related to  Holling~\cite{holling1973}. The indicators treated in this paper are listed in Table~\ref{indicators_summary} according to this classical subdivision\par\smallskip
\renewcommand{\arraystretch}{1.4}
\begin{table}[h]
	\begin{tabular}{@{} L{4.2cm} C{0.4cm} C{0.7cm} | L{4.2cm} C{0.4cm} C{0.7cm} @{}}   \hline \toprule
	\multicolumn{3}{c}{\textnormal{\textbf{Engineering resilience}}}  & \multicolumn{3}{c}{\textnormal{\textbf{Ecological resilience}}}\\\cmidrule(lr){1-3} \cmidrule(lr){4-6}
		Characteristic return time$^\S$ &	$
		 T_R$ &\ref{local_char_time} &Latitude in width$^\S$& $L_w$ &\ref{def_lw}\\	
		Reactivity$^\S$  &	$R_0$  &\ref{reactivity_def}&	Distance to threshold$^\S$& $DT$ &\ref{def_DT}\\
        Max amplification$^\S$& $\rho_{max}$ &\ref{max_ampl_def} & Precariousness$^{\dagger\ddagger}$ & $P_\A$&\ref{def:precariousness}\\
         Max amplification time$^\S$& $t_{max}$ &\ref{max_ampl_def} &Latitude in volume$^\S$ & $ L_v$ &\ref{latitude_volume} \\
       	 Stochastic invariability$^\dagger$ & $\mathcal{I}_S$ &\ref{def_int_stoch_invar}& Basin stability$^\S$ & $S_\mathcal{B}$&\ref{basin_stab_def}\\ 
         Deterministic invariability$^\dagger$
		&$\mathcal{I}_D$ &\ref{def_int_det_invar}&	Resistance (potential)$^\S$  & $W$ &\ref{potential_def}\\
        	Average return time$^\S$ & $\!\!\!\!T_R^{mean}$ &\ref{return_time_general}&	Flow-kick resilience$^{\S\dagger} $ &--&\ref{def_flowkick_boundary}\\
	 Maximal return time$^\S$ & $\!\!\!\!T_R^{sup}$ &\ref{return_time_general}&	Intensity of attraction$^\dagger$ & $\mathcal{I}$ &\ref{intensity_def}\\
		
		Resistance (Harrison)$^{\dagger\ddagger}$&$R$ &\ref{hison_res}&	Expected escape times$^\dagger$& $\tau$&\ref{defn:exp_escape_times}\\
		Elasticity$^{\dagger\ddagger}$ & $E$ &\ref{def:elasticity}& Distance to bifurcation$^{\ddagger}$&$D_{bif}$&\ref{dist_to_bif_def}\\
		&&&Persistence$^{\dagger\ddagger}$ &$P$&\ref{def:persistence}\\
		&& &Rate-induced tipping$^{\dagger\ddagger}$&$D_{\rm{rate}}$&\ref{dist_to_bif_rate}\\
		\hline
		\toprule
	\end{tabular}
	\caption{Summary of the indicators reviewed in this work according to the ample categories of engineering and ecological resilience (see Peterson~\cite{peterson1998}). The presentation in the paper privileges a subdivision based on the mathematical techniques which they entail. The notation and number of
	definition are displayed next to the name of each indicator. The superscripts indicate the type of perturbation for which each indicator is designed: ($\S$) perturbations of initial conditions, ($\dagger$) time-dependent perturbations, ($\ddagger$) perturbations of parameters.}
	\label{indicators_summary}
\end{table}

In this paper we have three main objectives. Firstly, we carry out an extensive review of indicators of resilience within the framework of continuous dynamical systems theory. While doing so, we provide a systematic organization based on the mathematical techniques which they entail. To our knowledge, such a comprehensive effort does not exist in the literature, nor for dimension, nor in the organizational spirit. As an immediate effect, this paper provides an easier access to a very vast and fragmented literature. Secondly, we aim at increasing  the applicability of this theory by generalizing  these definitions (wherever possible) to general attractors, whereas they are usually designed  only for stable equilibria and sometimes periodic orbits. However,  equilibria and periodic orbits, while playing a fundamental role in the understanding of many phenomena, are far from exhausting the dynamical possibilities in higher dimensional and complex systems. In particular, this applies to the local theory based on the first order approximation of the dynamics near a hyperbolic trajectory. In this context, it is natural to use the notion of exponential dichotomy from the theory of nonautonomous linear systems. Incidentally, we note that this fact encourages the investigation of resilience also in nonautonomous nonlinear dynamical systems, although, for the sake of simplicity, we limit our presentation to the autonomous case. Lastly, we enrich the discussion  by bringing together recent accomplishments in the theory of nonautonomous/rate-induced tipping phenomena, where the time-dependent variation of parameters of some systems has to be generalized beyond classical autonomous bifurcations to the nonautonomous setting. \par\smallskip

The paper is organized as follows. In  Section~\ref{sec:prelimin}, we set the notation and recall some notions on continuous dynamical systems. Each of the following four sections addresses a class of indicators of resilience grouped according to the most relevant  mathematical technique upon which they are based: Section~\ref{local_lI_subsection} contains the local indicators obtained via the linearization in the neighbourhood of a hyperbolic trajectory. Section~\ref{secbasinshape} deals with the indicators focusing on geometric features of the basins of attraction. In Section~\ref{sec:transient}, we present all the indicators related to the transient behaviour of a system during and after a perturbation. The variation of parameters and the consequent indicators are treated in Section~\ref{sec:parameters}. In Section~\ref{sec:examples}, we carry out an analysis of resilience of a model from population dynamics, using and comparing the indicators introduced in the previous sections. We close the paper with a final discussion in Section~\ref{sec:conclusions}.

\section{Notation, assumptions and preliminary definitions}\label{sec:prelimin}
Although the content of this paper applies to any metric space $(X,d)$, for the sake of clarity and accessibility for applications, we will restrict to  the $N$-dimensional Euclidean space $\R^N$ with norm $|\cdot|$. When $N=1$, we will simply write $\R$ and the symbol $\R^+$ will denote the set of nonnegative real numbers. The symbol $\R^{N\times M}$ represents the set of matrices with $N$ rows  and $M$ columns, and given $A\in\R^{N\times M}$, $A^\top$ will denote its transpose. Moreover, the symbol $\|A\|_{op}$ will represent the operator norm (induced by the Euclidean norm) of the matrix $A$. As a rule, a singleton $\{x\}$ in $\R^N$ will be identified with the element $x$ itself and, given $E\subset\R^N$, the symbol $\overline E$ will denote the closure of $E$ with respect to the topology induced by the Euclidean norm.
Moreover, by $B_r(p)$, we denote the open ball of $\R^N$ centered at $p$ and with radius~$r$. If $p$ is the origin of $\R^N$, we simplify the notation to $B_r$. On the other hand,  given any set $E\subset\R^N$, $B_r(E)$ will denote the set of points $x\in\R^N$ such that $x\in B_r(p)$ for some $p\in E$. Furthermore, for any 
$U\subseteq\R^M$ and any $W\subset\R^N$, 
$\mathcal{C}(U,W)$ will denote the space of continuous functions from $U$ to $W$ endowed with the usual supremum norm $\|\cdot\|_{\mathcal{C}(U,W)}$.
\par\smallskip
Unless otherwise stated, we will deal with
a continuous dynamical system on $\R^N$ which we will identify with its continuous (local) \emph{flow}
\begin{equation*}\label{eq:flow}
\phi:\mathcal{U}\subset\R\times\R^N\to\R^N,\quad(t,x_0)\mapsto \phi(t,x_0).    
\end{equation*}
Moreover, given $E\subset\R^N$ and $t>0$, we use $\phi(t,E)$ to denote the set
\[
\phi(t,E)=\big\{y\in\R^N\mid y=\phi(t,x) \text{ for some }x\in E\big\}. 
\]
In particular, we recall that a set $E\subset\R^N$ is called \emph{forward invariant} under the flow $\phi$ if $\phi(t,E)\subset E$ for all $t>0$ and \emph{invariant} if $\phi(t,E)=E$ for all $t\in\R$.

Where necessary, we will assume that the continuous flow $\phi$ is induced by an autonomous ordinary differential equation of the form 
\begin{equation}\label{eq:autonomous}
    \dot x=f(x),\quad x \in \R^N,
\end{equation}
where $f\colon \R^N\to\R^N$ is regular enough so that the initial value problem $x(0)=x_0\in\R^N$ admits a unique solution $x(\cdot,x_0)\in C(I_{x_0},\R^N)$, with $I_{x_0}$ being its maximal interval of definition. It is well-known that~\eqref{eq:autonomous} induces a continuous (local) flow on $\R^N$ via the relation
\begin{equation*}
 \phi(t,x_0)=x(t,x_0).    
\end{equation*}
\par\smallskip

We will also assume that the considered dynamical system has a local attractor $\A\subset \R^N$ in the sense of the definition below (which is taken from Kloeden and Rasmussen~\cite{kloeden2011}).
\begin{defn}[Local attractor and basin of attraction]\label{attractor_def}
    A compact set $\A\subset \R^N$ invariant under a flow $\phi$ on $\R^N$ is called a \emph{local attractor} if there exists $\eta>0$ such that for all $x_0\in B_\eta (\A)$, $x(\cdot,x_0)$ is defined for all $t>0$ and  
    \[
    \lim_{t\to\infty}\dist\big(\phi(t,B_\eta(\A)),\A\big)=0,
    \]
where for all $A,B\subset\R^N$, $\dist(A,B)$ denotes the Hausdorff semi-distance between $A$ and $B$. We call the  \textit{basin of attraction} of $\A$ the set
\[
\BA=\{x_0 \in \R^N:\emptyset \neq \omega(x_0)\subset \A\}.
\]
where, given $E\subset\R^N$, $\omega(E)$ represents the \emph{omega limit set of $E$} under the flow $\phi$, i.e.~ the set
\[
\omega(E):=\bigcap_{t\ge0}\overline{\bigcup_{s\ge t}\phi(s,E)}.
\]
\end{defn}

\begin{rmk}\label{rmk:different_attractors}
     Several alternative definitions of local attractor are available in the literature. 
    For example, the above definition is sometimes completed by the condition of minimality:  ``there is no proper subset of $\A$ which satisfies the properties in Definition~\ref{attractor_def}''. In order to understand the implication of such condition, consider the two-dimensional differential system in polar coordinates $(r,\varphi) \in [0,\infty)\times[0,2\pi)$ where $0$ is identified with $2\pi$,
    \begin{equation}\label{eq:28/11-12:54}
    \dot{r}=r(r-1)(r-3), \qquad
	\dot{\varphi}=1.
    \end{equation}
   \begin{figure}[h]
       \centering
        \includegraphics[scale=0.25,trim={0 0cm 0 0cm},clip]{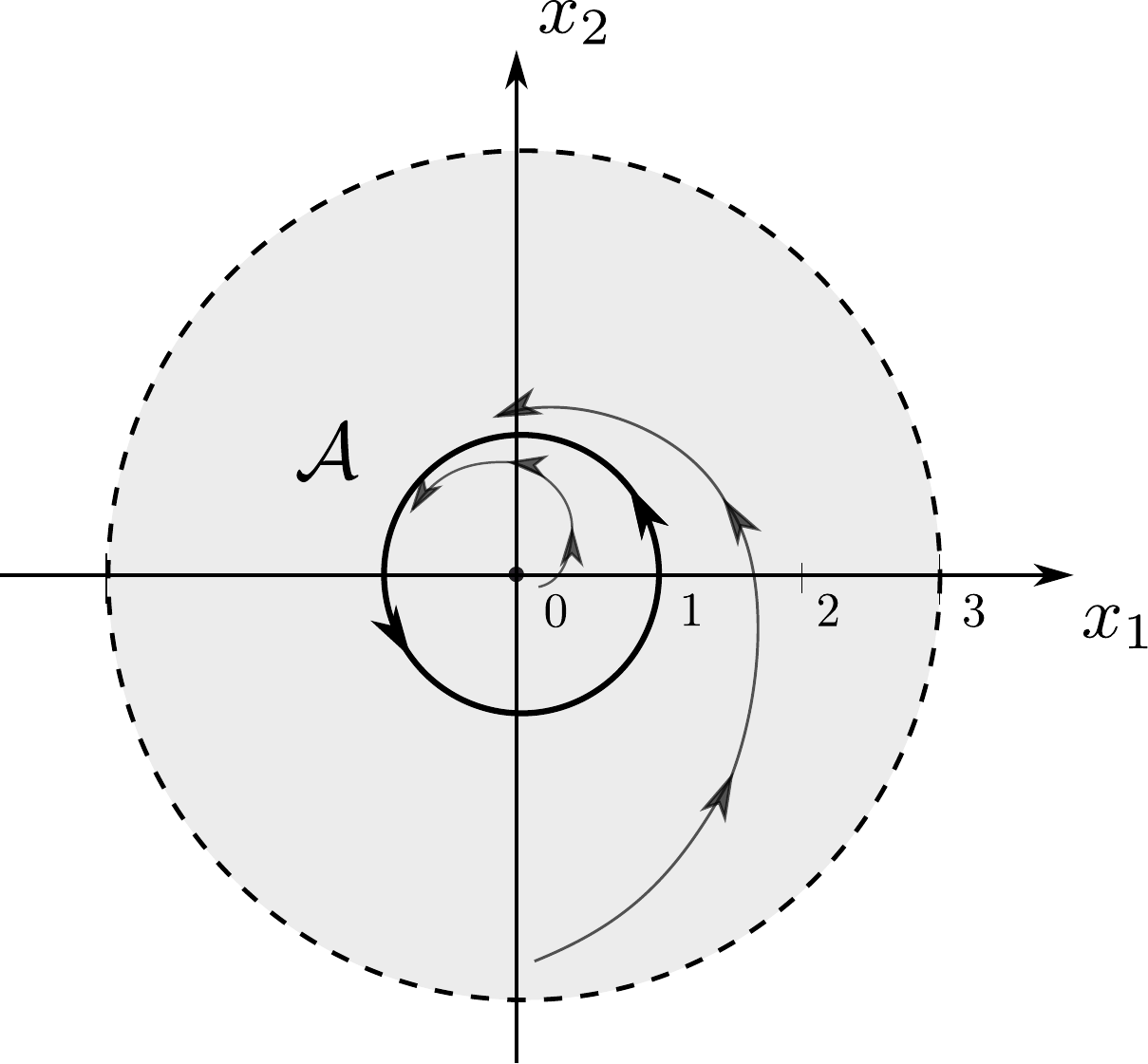}
    \caption{Sketch of the phase space for~\eqref{eq:28/11-12:54}. According to Definition~\ref{attractor_def}, one can choose either the whole closed ball of radius one, the origin and the periodic orbit of radius one, or just the latter as a local attractor for the induced dynamical system. Under the minimality condition in  Remark~\ref{rmk:different_attractors}, only the periodic orbit of radius one qualifies for being a local attractor and the origin belongs to the boundary of the basin of attraction.}
       \label{fig:attractor}
   \end{figure}
   It is easy to show that~\eqref{eq:28/11-12:54} has an unstable equilibrium at the origin and two hyperbolic periodic orbits of radius $r=1$ (stable) and radius $r=3$ (unstable), respectively (see  Figure~\ref{fig:attractor} for a sketch of the phase space).  
    If one employs the minimality condition, the local attractor of the system consists of the points in the periodic orbit of radius $r=1$ and its basin of attraction is the open ball of radius $r=3$ with the exception of the origin. In spite of belonging to the boundary of the basin of attraction, the origin is merely an isolated point. Every solution starting in a sufficiently small ball centered at the origin, except for the origin itself, will eventually converge to the periodic orbit of radius $r=1$. If the minimality condition is not taken into account, we are allowed to choose as a local attractor the subset of $\R^2$ made of the points in the periodic orbit of radius $r=1$ and also the origin. Its basin of attraction is now the whole open ball of radius $r=3$. It is worth noting that an alternative approach is given by the Milnor's definition of local attractor~\cite{milnor1985}, where the asymptotic behaviour of solutions starting in negligible sets (with respect to the Lebesgue measure in this context) are disregarded. In this case, the local attractor would be made of the points in the sole periodic orbit of radius $r=1$. In order to avoid measure theoretic arguments as much as possible, and in view of the nonlocal nature of the concept of ecological resilience, in this work we privilege the flexibility provided by Definition~\ref{attractor_def}. Note also that, although in this example the boundary of the basin is an unstable periodic orbit, this is not always the case (see Figure~\ref{fig:Duffing}). 
    \end{rmk}

\begin{figure}
       \centering
        \begin{overpic}[width=0.65\textwidth]{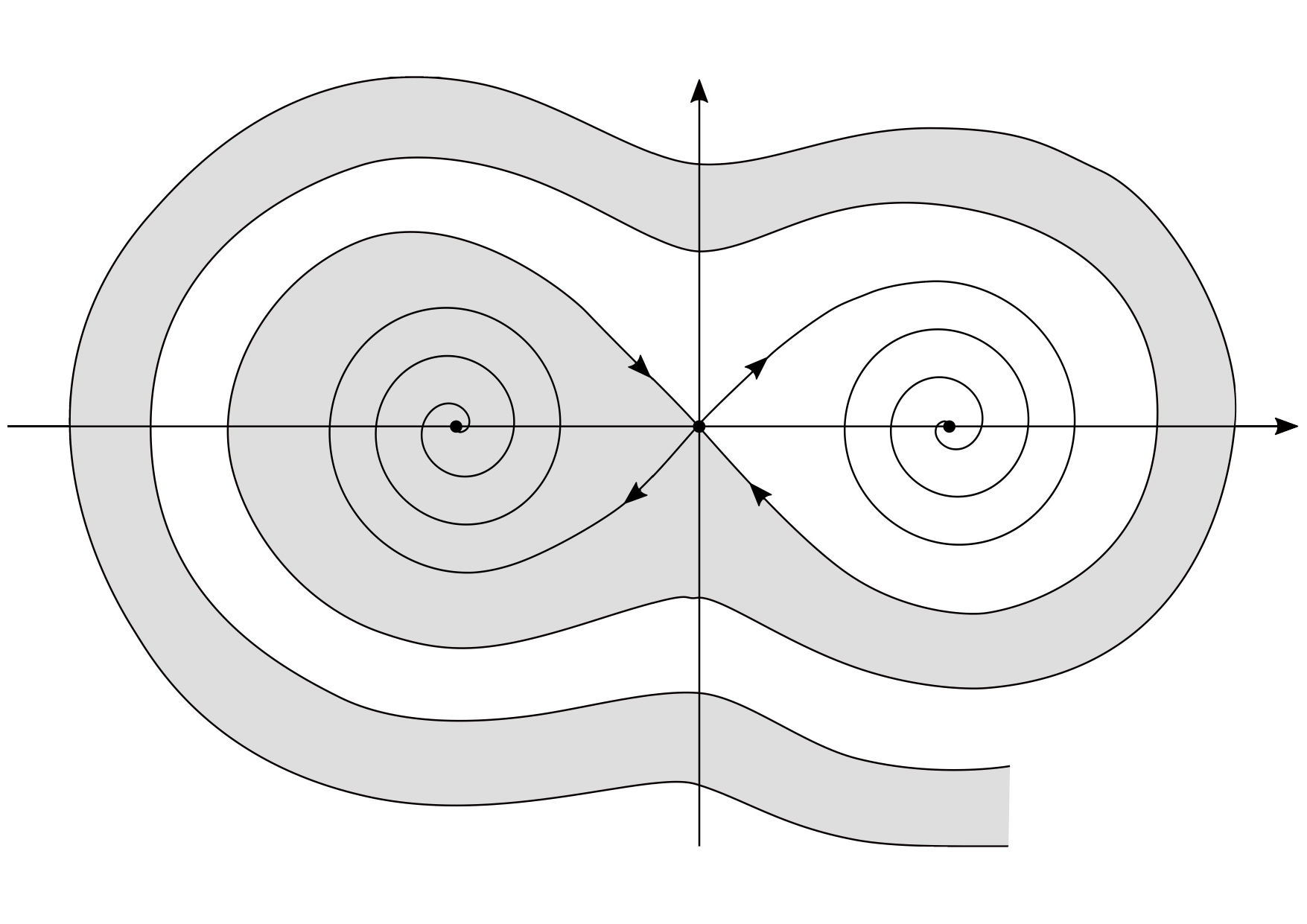}
        \put(97,40){$x$}
        \put(55,63){$y$}
        \end{overpic}
    \caption{The unforced Duffing oscillator is a classic example given by the equations $\dot x=y$, $\dot y=x-x^3-\delta y$, where $\delta\ge0$. The system has three equilibria, namely $(0,0)$ and $(\pm1,0)$. Easy calculations show that for $\delta>0$, the origin is a saddle and the fixed points at $(\pm1,0)$ are asymptotically stable. In particular, the boundary of the basin of attraction of each one of the latter is given by the stable manifold of the saddle at the origin. 
    }
       \label{fig:Duffing}
   \end{figure}

\begin{defn}[Global Attractor]\label{def:global_attractor}
A compact set $\A\subset \R^N$ invariant under the flow $\phi$ induced by~\eqref{eq:autonomous} is called a \emph{global attractor} if $\A$ attracts each bounded subset of $\R^N$.
\end{defn}
It is easy to show that the global attractor for $\phi$ is the minimal compact set that attracts each bounded subset of $\R^N$, and also that it is the maximal closed and bounded invariant set (see Lemma 1.6 in~\cite{carvalho2012attractors}). As a matter of fact, the global attractor is composed of all the points of $\R^N$ which belong to a bounded global solution for~\eqref{eq:autonomous} (see Theorem 1.7 in~\cite{carvalho2012attractors}).\par\smallskip

We also recall that, under the considered definitions and assumptions, the basin of attraction of a local attractor is an open set. The proof is a consequence of simple topological arguments and it is therefore omitted.
\begin{prop}\label{basin_open_prop}
	Assume the evolution operator $\phi$ is continuous. The basin of attraction $\BA$ of a local attractor $\A$ is an open set.
\end{prop}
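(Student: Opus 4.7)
The plan is to show that every $x_0 \in \BA$ is an interior point. Fix such an $x_0$; by definition, $\omega(x_0)$ is a nonempty subset of the compact set $\A$. The key qualitative consequence I would extract is that the forward orbit of $x_0$ eventually enters every neighborhood of $\A$: for every $\delta > 0$ there is a time $T = T(\delta) > 0$ with $\phi(T, x_0) \in B_\delta(\A)$.

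First I would pick $\delta < \eta/2$, where $\eta > 0$ is the uniform attracting radius furnished by Definition~\ref{attractor_def}, so that $\phi(T, x_0) \in B_{\eta/2}(\A)$. Then the continuity of the time-$T$ map $x \mapsto \phi(T, x)$ at $x_0$ supplies an $r > 0$ with $\phi(T, B_r(x_0)) \subset B_\eta(\A)$. For any $y \in B_r(x_0)$, once $\phi(T, y) \in B_\eta(\A)$ is reached, Definition~\ref{attractor_def} guarantees that $\phi(t, y)$ is defined for all $t \geq T$ and that $\dist(\phi(t, y), \A) \to 0$ as $t \to \infty$. Since $\A$ is compact, the orbit has accumulation points, all of them in $\A$, whence $\omega(y)$ is nonempty and contained in $\A$. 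This places $y$ in $\BA$, so $B_r(x_0) \subset \BA$ and $\BA$ is open.

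The step I expect to need the most care is not the attracting estimate itself but the verification that the trajectory $\phi(\cdot, y)$ is actually defined on the whole interval $[0, T]$ for every $y$ close to $x_0$, given that the flow $\phi$ is only local. This is the standard fact that the maximal interval of existence depends lower-semicontinuously on initial data: since $\{t\} \times \{\phi(t, x_0)\}$ lies in the open set $\mathcal{U}$ for each $t \in [0, T]$ and $[0, T]$ is compact, a tube around the trajectory of $x_0$ is contained in $\mathcal{U}$, and shrinking $r$ if necessary ensures that all nearby orbits are defined up to time $T$. With this technical point in place, the three-line continuity/absorption/invariance argument above goes through without friction.
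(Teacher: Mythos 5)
Your argument is correct and follows essentially the same route the paper has in mind: fix a time $T$ at which $\phi(T,x_0)$ lies inside the absorbing neighborhood $B_\eta(\A)\subset\BA$, and pull that neighborhood back through the continuous time-$T$ map to get an open neighborhood of $x_0$ contained in $\BA$. The extra care you take with the local flow (that $\phi(T,\cdot)$ is defined on a full neighborhood of $x_0$, via a compactness/tube argument on $[0,T]\times\{x_0\}\subset\mathcal{U}$) is a technical point the paper's omitted proof glosses over, but it is exactly the right thing to check.
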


 Throughout the document we will sometimes need to use nonautonomous differential equations, i.e. problems of the type 
\[
\dot x=f(t,x),\quad t\in\R,x\in\R^N,
\]
where $f\colon\R\times \R^N\to\R^N$ is regular enough so that the initial value problem $x(t_0)=x_0\in\R^N$, $t_0\in\R$, admits a unique solution $x(\cdot,t_0,x_0)\in C(I_{t_0, x_0},\R^N)$, with $I_{t_0,x_0}$ being its maximal interval of definition and $t_0\in I_{t_0, x_0}$. In order to carry out a dynamical analysis of a nonautonomous differential equation one can either construct a two-parameter semigroup---also known as process---or a skew-product flow, that is an autonomous flow on an extended phase space where the base is a functional space parametrized on time and the fiber is $\R^N$. For a concise introduction to these and further techniques in nonautonomos dynamical systems theory we point the reader to the the first chapter of the book by Kloeden and Pötzsche~\cite{kloeden2013nonautonomous}  (the following chapters contain also several applications of the theory to life sciences); for a more extended presentation we recommend for example the book by Kloeden and Rasmussen~\cite{kloeden2011}. Both techniques are frequently used in more theoretical aspects of nonautonomous dynamics. Yet, the use of these methods is relatively technical, so we decided to focus on the key notions necessary. In particular, we now recall two important notions from nonautonomous dynamical systems theory which will play an important role throughout the work: the one of exponential dichotomy and of hyperbolic solution.
\par\smallskip

An exponential dichotomy consists of a splitting of the phase space of a nonautonomous linear differential equation into solutions that decay exponentially fast to zero either as $t\to\infty$ or as $t\to -\infty$~\cite{coppel2006dichotomies,kloeden2013nonautonomous,kloeden2011}. For autonomous systems such splitting is obtained through the real parts of the eigenvalues (when they are nonzero) of the matrix defining the system and the associated eigenspaces. However, classical examples show that the eigenvalues are generally of no use when the matrix depends on time~\cite{coppel2006dichotomies}. 

\begin{defn}[Exponential dichotomy]\label{def:exp_dichotomy}
Given a locally integrable function $ A:\R\to\R^{N\times N}$, the linear system 
\begin{equation}\label{eq:linear_non_autonomous}
 \dot y= A(t)y   
\end{equation}
is said to have an \emph{exponential dichotomy} on an interval $I\subset\R$ if there are a projection $P$ (i.e.~a matrix $P\in\R^{N\times N}$ satisfying $P^2=P$), and constants $\alpha>0$, $K\ge1$  such that 
\[
\begin{split}
    |Y(t)PY^{-1}(s)|\le Ke^{-\alpha(t-s)},\quad\text{for all }t,s\in I, \, t\ge s,&\text{ and}\\
    |Y(t)(Id-P)Y^{-1}(s)|\le Ke^{\alpha(t-s)},\quad\text{for all }t,s\in I, \,t\le s,&
\end{split}
\]
where $Y:\R\to\R^{N\times N}$ is a fundamental matrix solution of~\eqref{eq:linear_non_autonomous}, and $\textnormal{Id}$ is the identity matrix on $\R^{N\times N}$. 
\end{defn}
The exponential dichotomy on $\R$ fulfils the role that eigenvalues with nonzero real part play in the study of stability of hyperbolic equilibria or periodic orbits. However, it also allows to treat nontrivial time-dependent solutions (if they exist) which have the equivalent role of determining the asymptotic behaviour of solutions in their vicinity.
For the sake of consistency and to avoid introducing further notation, we will present the notion of hyperbolic solutions only for autonomous ordinary differential equations although it is defined for general nonautonomous systems~\cite{kloeden2011}.
\begin{defn}[Hyperbolic solution]\label{def:hyperb_sol}
A globally defined solution $\widetilde x:\R\to\R^N$ of a nonautonomous differential equation $\dot x =f(t,x)$, with $f\colon\R\times\R^N\to\R^N$, $(t,x)\mapsto f(t,x)$ continuously differentiable in $x$ for almost every $t\in\R$, is called {\em hyperbolic\/} if the variational
equation $\dot y= {\textnormal{D}}f\big(t,\widetilde x(t)\big)y$ has an
\emph{exponential dichotomy} on $\R$. In particular, we will call an hyperbolic solution $\widetilde x$ \emph{locally attractive} if the exponential dichotomy has projector the identity $P={\textnormal{Id}}$.
\end{defn}
\begin{rmk}
The recalled notion of hyperbolicity generalizes the one for equilibria and periodic orbits to arbitrary time-dependent solutions. One of the important reasons for presenting it is that hyperbolicity in the sense of Definition~\ref{def:hyperb_sol} is a robust property, in that a differential equation with a hyperbolic solution can be perturbed (within a certain class of perturbations) and the hyperbolic solution persists~\cite{potzsche2011nonautonomous}. For example classic hyperbolic equilibria can be perturbed into hyperbolic nonstationary trajectories. This fact is of key importance in the study of resilience which in fact aims to capture the persistence of certain properties of attractivity as we shall see in due time.
The reader who is interested in a deeper understanding of the extent of such generalization, can refer for example to~\cite[Section 1.4]{johnson2016nonautonomous} and the references cited therein (see for example the combined role of~\cite[Proposition 1.56]{johnson2016nonautonomous} and~\cite[Theorem~III.2.4]{hale1980ordinary}).
On the other hand, the reader who deals with attractors made of equilibria and/or periodic orbits can intend the term hyperbolic in the classic sense.
\end{rmk}

\section{Local Indicators}\label{local_lI_subsection}
Linear stability analysis is one of the classic tools in the study of dynamical systems. It allows to infer the asymptotic dynamics of a system in the surrounding of a reference trajectory by looking at its linear approximation and the dominant Lyapunov exponent of a fundamental matrix solution. If the dominant Lyapunov exponent's real part is nonzero, the sign qualifies the reference trajectory as stable or unstable. Its absolute value measures the asymptotic speed of convergence or divergence after a small perturbation. When the reference trajectory is a stationary state or a periodic orbit, this reduces to the calculation of the eigenvalues of the Jacobian of the initial vector field evaluated on such orbits. \par\smallskip

Such ideas have been used to determine the resilience of an attractor via the rate of convergence of the nearby solutions. Due to the inherent local nature of linear stability analysis, these indicators of resilience overlook the topological structure of the phase space away from the considered attractor and are not designed to determine the highest possible perturbation which a system can absorb before tipping to a different state.\par\smallskip

The section contains five subsections and six indicators. In subsection~\ref{local_char_time_susbec} we present the classic \emph{characteristic return time} which relates the resilience of a system in the nearby of an attractor to the asymptotic rate of convergence of the solutions. Subsections~\ref{reactivity_subsection} and~\ref{amplification_subsec} address respectively the question of local resilience in the short-term horizon and in the transient after a perturbation of the initial conditions. The former contains the indicator  \emph{reactivity} whereas the latter the indicators \emph{maximal amplification} and \emph{maximal amplification time}. Subsection~\ref{subsec_intrinsic_variability} addresses the question of local resilience against time-dependent (random or deterministic) perturbations and contains the indicators \emph{intrinsic stochastic variability} and \emph{intrinsic deterministic variability}. Finally, subsection~\ref{local_discussion_subsection} contains a short discussion of the relations among the previously introduced indicators.
\par\smallskip

Besides the assumptions in Section~\ref{sec:prelimin}, we shall also consider the following assumption.
\begin{enumerate}[label=\upshape(\textbf{H1}),leftmargin=*]
	\item \label{system_l_ind}
	The function $f:\R^N\to\R^N$ in~\eqref{eq:autonomous} is assumed to be continuously differentiable and, for every $x\in\R^N$, ${\textnormal{D}}f(x)$ will denote the Jacobian of $f$ calculated at $x$. Moreover, assume that $\A=\{\widetilde x(t)\mid t\in\R\}$, and $\widetilde x$ is a locally attractive hyperbolic solution for $\dot x =f(x)$.
	\par\smallskip
	
\end{enumerate}
\subsection{Characteristic Return Time}\label{local_char_time_susbec}
The notion of characteristic return time in the context of resilience for ecological systems already dates back to May~\cite{may2019}.
A very commonly used version is due to Beddington et al.~\cite{beddington1976} for discrete dynamical systems and then to Pimm and Lawton~\cite{pimm1977} for the continuous case. It is indistinctly used under  different names, e.g., return time~\cite{pimm1977}, characteristic return time~\cite{pimm1984}, engineering resilience~\cite{gunderson2000, holling1996,standish2014}, resilience~\cite{neubert1997}.
A qualitative description of the underlying idea  is provided by 
Pimm~\cite{pimm1984} as ``how fast the variables return towards their equilibrium following a perturbation", or, more specifically, as the ``time taken for a perturbation to return to $1/e$ of its initial value". The definition is motivated by the fact that a trajectory starting in the nearby of a locally stable equilibrium $x^*$ will approach it in a time which is proportional to the reciprocal of the eigenvalue with largest real part for the system linearization at $x^*$. The presentation below is however given for the more general case of a locally attractive hyperbolic solution (see Definition~\ref{def:hyperb_sol}).

\begin{defn}[Characteristic return time]
	\label{local_char_time}
	Consider $f:\R^N\to\R^N$ satisfying assumption~\ref{system_l_ind}. The \textit{characteristic return time} $T_R$ of the system $\dot x=f(x)$ for the attractor $\A$ is defined as 
	\[
	T_R(\A)= \frac{1}{\widehat \alpha},
	\] 
    where  
	\[
    \widehat \alpha:=\inf\big\{\alpha>0\ \mid\  |Y(t)Y^{-1}(s)|\le Ke^{-\alpha(t-s)},\quad\text{for all }t\ge s\big\},
    \]
    and $Y:\R\to\R^{N\times N}$ is a fundamental matrix solution of $\dot y ={\textnormal{D}}f\big(\widetilde x(t)\big)y$. 
\end{defn}	
\begin{rmk}\label{rmk:char_return_equil}
    If the considered local attractor $\A= x^*$ is an attractive hyperbolic fixed point, then $\widehat \alpha$ coincides with the opposite of the real part of the dominant eigenvalue of ${\textnormal{D}}f(x^*)$, i.e.~$\widehat \alpha=-Re\big(\lambda_{dom}({\textnormal{D}}f(x^*))\big)$ .
\end{rmk}

The definition of characteristic return time motivated the introduction of the following indicator of resilience for a stable hyperbolic equilibrium as the rate of decay 
	\[EV(\A)=\widehat \alpha.\]
This indicator has been widely used and studied for both continuous and discrete systems in the case that $\A=x^*$ is hyperbolic and attracting (see for example Arnoldi et al.~\cite{arnoldi2016}, DeAngelis et al.~\cite{deangelis1989,deangelis1989b}, Harwell and Ragsdale~\cite{harwell1979}, Pimm and Lawton~\cite{pimm1978}, Rooney et al.~\cite{rooney2006}, Van Nes and Scheffer~\cite{van2007}, Vincent and Anderson~\cite{vincent1979}).

\subsubsection*{Invariance with respect to change of coordinates} 
The characteristic return time is invariant with respect to change of basis $z=Qy$, with $Q$ nonsingular. Indeed, considering the principal matrix solutions $U(t,s)$ and $V(t,s)$  of $\dot y=A(t)y$ and $\dot z=QA(t)Q^{-1}y$ respectively, and $y_0\in\R^N$, we have that
$Q^{-1}V(t,s)Qy_0=U(t,s)y_0$. Therefore, if  $\dot y=A(t)y$ has an exponential dichotomy on $\R$ with projector the identity and constants $\alpha>0$ and $K\ge 1$, then we have that 
\[
|V(t,s)|=|QU(t,s)Q^{-1}|\le \widetilde Ke^{-\alpha(t-s)}.
\]
For autonomous systems, this fact becomes even more evident because $A$ and $QAQ^{-1}$ have the same set of eigenvalues.

\subsection{Reactivity}\label{reactivity_subsection}

Neubert and Caswell~\cite{neubert1997} proposed and studied different indicators with the aim of capturing the transient behavior of a trajectory starting in the neighbourhood of a stable equilibrium as it may substantially differ from the asymptotic one. Specifically, the \textit{reactivity} corresponds to the maximum instantaneous rate at which an asymptotically stable linear homogeneous system responds if the initial condition is taken outside the origin. The reactivity of a nonlinear system $\dot x = f(x)$ in the neighbourhood of a stable hyperbolic equilibrium $x^*$ is obtained through its linearization at $x^*$.
\begin{defn}[Reactivity] 
	\label{reactivity_def}
	Let $\dot y= A(t)y$, with $A:\R\to\R^{N\times N}$ locally integrable, be an asymptotically stable linear homogeneous system, and denote by  $y(\cdot,t_0,y_0)$ its unique solution satisfying $y(t_0,t_0,y_0)=y_0$. We shall call the \textit{reactivity} of the system at time $t_0\in\R$, the quantity 
	\begin{equation}\label{reactivity_eq}
	R_{t_0}=\max_{|y_0|\neq 0} \left( \frac{1}{|y(t,t_0,y_0)|}\frac{d|y(t,t_0,y_0)|}{dt} \right)\bigg|_{t=t_0}.
	\end{equation}
	The system $\dot y= A(t)y$ is called \emph{reactive} if there is $t_0\in\R$ such that $R_{t_0}>0$ and nonreactive otherwise.
	A nonlinear system $\dot x =f(x)$ satisfying Assumption~\ref{system_l_ind} is called reactive if there exists a neighbourhood of a locally attractive hyperbolic solution $\widetilde x$ such that $\dot y ={\textnormal{D}}f\big(\widetilde x(t)\big)y$ is reactive.  
\end{defn}

If a system is reactive in a neighbourhood of a locally attractive hyperbolic solution $\widetilde x$, some trajectories starting in a neighborhood of $\widetilde x$  may initially move away from $\widetilde x$, before  converging to it. In other words, the finite-time Lyapunov exponents for $\widetilde x$ can be positive. 
An uninformed guess might relate the short-term behaviour of solutions to the real part of the least stable eigenvalue of $A$. This is true only when $A$ has a set of orthogonal eigenvectors. In such a case, however, a monotonic decay towards zero characterizes all the solutions since the eigenvalues of ${\textnormal{D}}f(x^*)$ determine both the asymptotic and the transient behaviour of the system. In other words, a short-time amplification is a possible effect of nonorthogonality of the eigenvector basis---also called non-normality of $A$. 
Neubert and Caswell~\cite{neubert1997} unveil a relation between the reactivity of a linear homogeneous system $\dot y= Ay$, $A\in\R^{N\times N}$, and the dominant eigenvalue of the symmetric part of the Toeplitz decomposition of $A$ (recall that every real symmetric matrix is Hermitian and therefore its eigenvalues are real).  The argument works also for nonautonomous linear homogeneous problems. Note that 
\begin{equation}\label{eq:reactivity_derivation}
\begin{split}
\frac{d|y(t,t_0,y_0)|}{dt} &=\frac{d}{dt}\sqrt{y(t,t_0,y_0)^\top y(t,t_0,y_0)}\\
&=\frac{\dot y(t,t_0,y_0)^\top y(t,t_0,y_0)+y(t,t_0,y_0)^\top \dot y(t,t_0,y_0)}{2|y(t,t_0,y_0)|} \\
& =\frac{ y(t,t_0,y_0)^\top \big(A(t)^\top +A(t)\big)y(t,t_0,y_0)}{2|y(t,t_0,y_0)|}.
\end{split}
\end{equation}
Therefore, from \eqref{reactivity_eq} we obtain that 
\[
R_{t_0}=\max_{|y_0|\neq 0}\frac{ y_0^\top \big(A(t_0)^\top +A(t_0)\big)y_0}{2|y(t,t_0,y_0)|^2}.
\]
The term on the right-hand side of the previous formula is also known as a Rayleigh quotient and its maximum is attained at the largest eigenvalue of the matrix $\big(A(t_0)^\top +A(t_0)\big)/2$~\cite{horn2012matrix}, that is, 
\[R_{t_0}=\lambda_{dom}\big(H(A(t_0)\big)\in\R,\qquad\text{where } H(A)=\frac{A(t_0)^\top +A(t_0) }{2}.\]

In the context of autonomous non-normal linear operators, reactivity is also known as the numerical abscissa of $A$~\cite{embree2005spectra}.

\subsubsection*{Invariance with respect to change of coordinates} In general, the reactivity is not preserved under a change of basis $z=Qy$, with $Q$ nonsingular. If $Q$ is orthogonal, i.e.~($Q^{-1}=Q^\top $), the eigenvalues of the symmetric parts of $A(t_0)$ and $QA(t_0)Q^{-1}$ are the same and reactivity is conserved.

\begin{exmpl}\label{reac_nonreac_example}
	In this example we compare three planar asymptotically stable linear systems $\dot{x}=A_i x,$ where $i=\{1,2,3\}:$ 
	\[
	A_1=	\begin{pmatrix}
			-2 & 0\\
			5 & -1	
			\end{pmatrix},\ \
			A_2=
			\begin{pmatrix}
			-2 & 1\\
			 \sqrt{26} + \sqrt{50} & - \sqrt{26} - \sqrt{50} - 1	
			\end{pmatrix},\ \
			A_3=
			\begin{pmatrix}
			-2 & 0\\
			1 & -1	
			\end{pmatrix}.
	\]

	\noindent All three systems have the same value of the dominant eigenvalue and consequently the same characteristic return time given as $T_R(0)=1$. The systems $A_1$ and $A_2$ are reactive with a positive value of reactivity given by $R_0=(\sqrt{26} - 3)/2\approx 1.05.$ The system $A_3$ is nonreactive with $R_0=(\sqrt{2} - 3)/2\approx -0.79.$  In the first row of Figure~\ref{non_reac_amplif_fig}, the plots show the time evolution of the magnitudes of trajectories starting from different perturbed initial conditions around the origin. 
	Whereas the trajectories of the reactive systems $A_1$ and $A_2$ can exhibit a transient growth in their magnitude, this is not possible for the nonreactive system $A_3$. Furthermore, this example shows that despite having the same characteristic return time and reactivity indicator values, the transient behaviour of the trajectories in systems $A_1$ and $A_2$ may vary substantially. This motivates the introduction of the so-called \emph{amplification envelope} (see Subsection~\ref{amplification_subsec}). The amplification envelopes of three considered systems are plotted in the second row of Figure~\ref{non_reac_amplif_fig}. 
\begin{figure}[h!]		
\captionsetup[subfigure]{labelformat=empty}
	\centering
	\begin{subfigure}{0.31\textwidth}
		\includegraphics[scale=0.295,trim={3.2cm 9.5cm 0cm 9.5cm},clip]{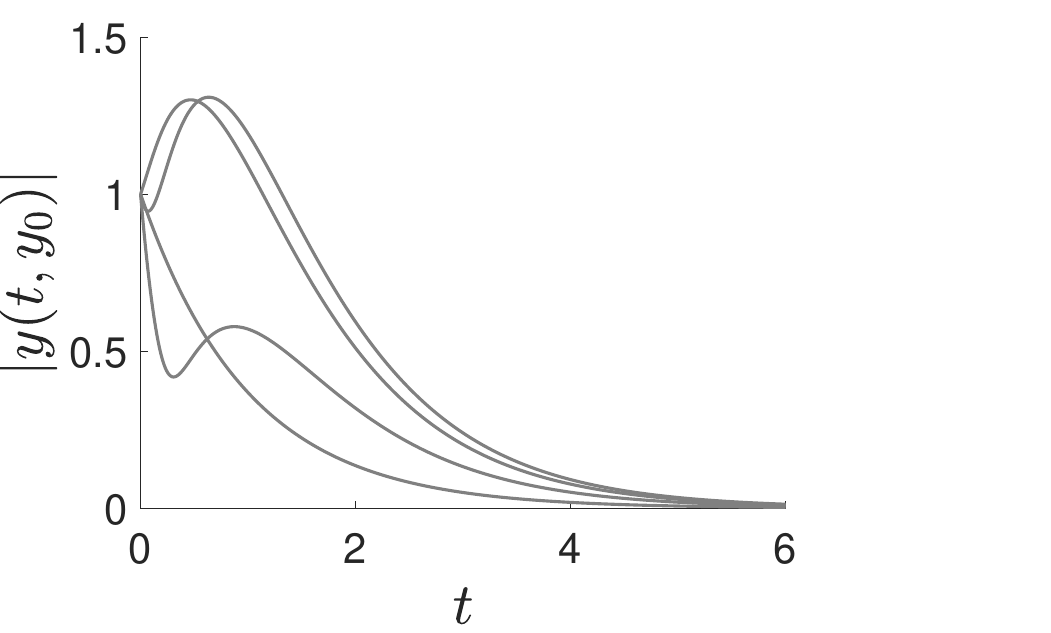}
		\caption{$A_1$}
	\end{subfigure}
	\begin{subfigure}{0.31\textwidth}
		\includegraphics[scale=0.295,trim={2.5cm 9.5cm 0cm 9.5cm},clip]{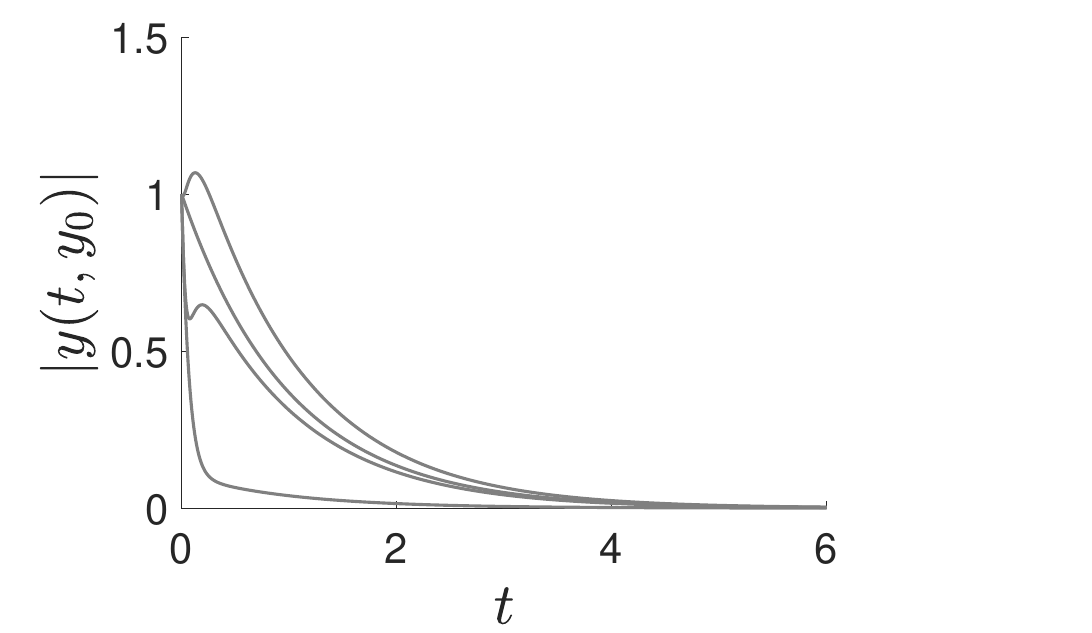}
		\caption{$A_2$}
	\end{subfigure}
		\begin{subfigure}{0.31\textwidth}
		\includegraphics[scale=0.295,trim={2.65cm 9.5cm 0cm 9.5cm},clip]{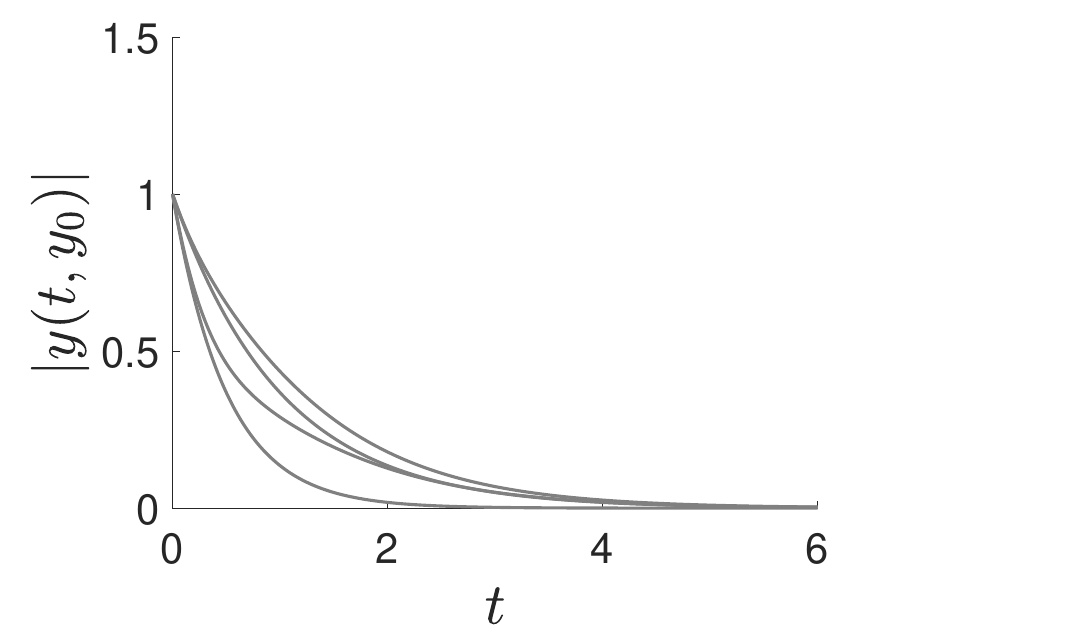}
		\caption{$A_3$}
	\end{subfigure}
	\begin{subfigure}{0.325\textwidth}
		\includegraphics[trim={0cm 0cm 0cm 0.5cm},clip,scale=0.295]{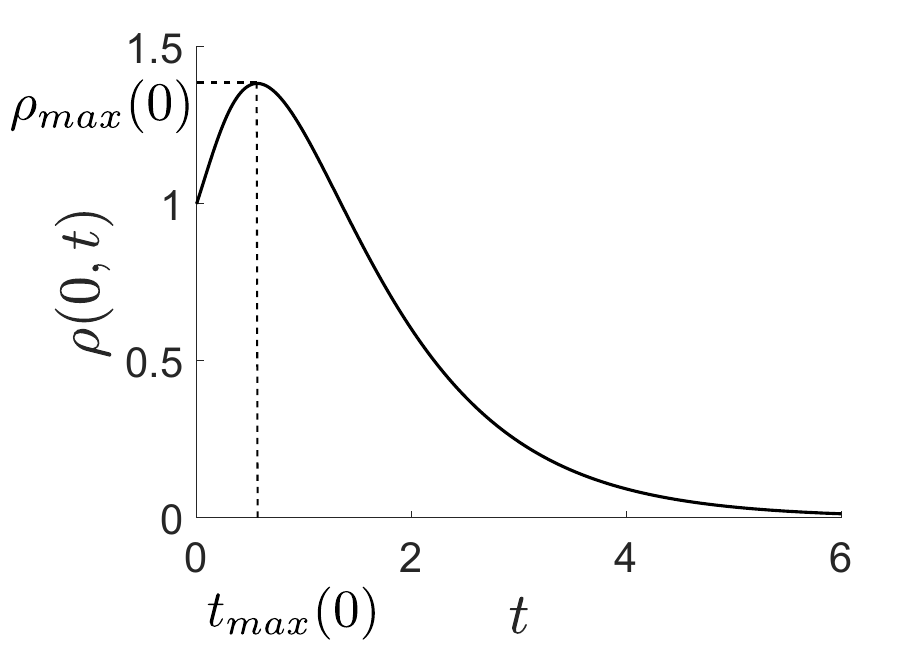}
		\caption{$A_1$}
	\end{subfigure}
	\begin{subfigure}{0.326\textwidth}
		\includegraphics[trim={0cm 0cm 0cm 0.5cm},clip,scale=0.295]{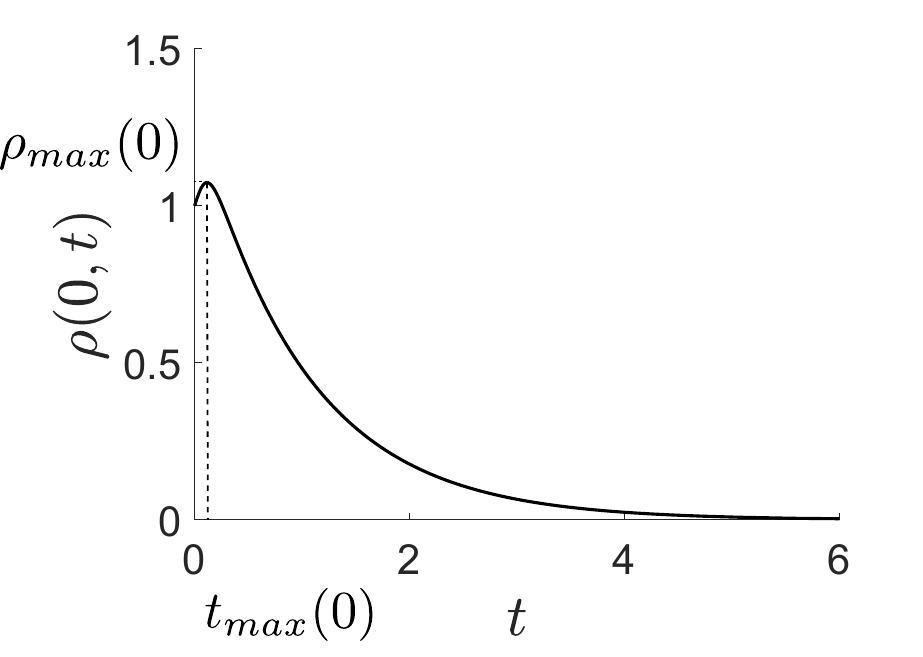}
		\caption{$A_2$}
	\end{subfigure}
		\begin{subfigure}{0.325\textwidth}
		\includegraphics[trim={0cm 0cm 0cm 0cm},clip,scale=0.295]{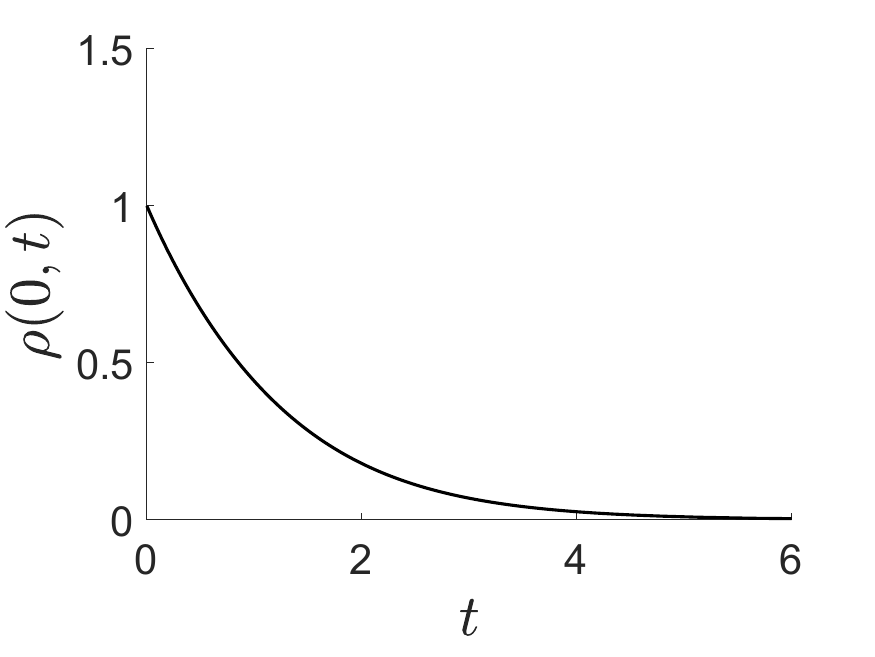}
		\caption{$A_3$}
	\end{subfigure}
	\caption{The transient behaviour of three systems $A_1,A_2,A_3$ from Example~\ref{reac_nonreac_example} is investigated. In the first row, the time evolution of the magnitude of four trajectories starting from different initial conditions around the origin is depicted. In the second row, we see the respective amplification envelopes of each system. Even though the systems have the same characteristic return time, and additionally, the systems $A_1$ and $A_2$ have also the same value of reactivity, the indicator amplification envelope and its characteristics $\rho_{max}$ and $t_{max}$ are able to capture different transient behaviour.}
		\label{non_reac_amplif_fig}
	\end{figure}
\end{exmpl}
\subsection{Maximal amplification and maximal amplification time}\label{amplification_subsec}
Example~\ref{reac_nonreac_example} shows that reactive systems with same reactivity and characteristic return time do not necessarily share the same transient behaviour. 
This fact led Neubert and Caswell~\cite{neubert1997} to the definition of \textit{amplification envelope}, which records the maximal deviation from the attractor for trajectories starting at  perturbed initial conditions of fixed norm.

\begin{defn}[Amplification envelope]
	\label{ampl_envelope}
	The \textit{amplification envelope} for an asymptotically stable linear system $\dot y= A(t)y$,  with $A:\R\to\R^{N\times N}$ locally integrable, is defined as the continuous function 
	\begin{equation}\label{eq:30/10-09:50}
	\begin{aligned}
	\rho:\R^+\times \R\to \R^+,\quad (t, t_0)\mapsto\rho(t,t_0)=\sup_{ |y_0|\neq 0} \frac{|y(t+t_0,t_0,y_0)|}{|y_0|},
	\end{aligned}
	\end{equation}
	where $y(\cdot,t_0,y_0)$ is the unique solution of $\dot y= A(t)y$ satisfying $y(t_0,t_0,y_0)=y_0$. Therefore,  $\rho(t,t_0)$ is in fact equal to the operator norm (induced by the Euclidean norm) of the principal matrix solution $Y(t+t_0,t_0)$ of $\dot y =A(t)y$ for the initial time $t_0$, that is 
	\[
	\rho(t,t_0)=\big\|Y(t+t_0,t_0)\big\|_{op}.
	\]
	For a nonlinear system $\dot x =f(x)$ such that $f$ satisfies Assumption~\ref{system_l_ind}, the \textit{amplification envelope} in a neighbourhood of a hyperbolic solution $\widetilde x$ is defined as the amplification envelope of the linear system $\dot y = {\textnormal{D}}f(\widetilde x(t))y$.
\end{defn}

Note in particular that if $A(t)=A\in\R^{N\times N}$ for all $t\in\R$, the amplification envelope depends only on the variable $t\in \R^+$, i.e.
\[
	\rho(t,t_0)=\rho(t)=\big\|e^{At}\big\|_{op}.
\]
In such a case, a constant $\mathcal{K}(A)$, known as the Kreiss constant of $A$ and defined as  $\mathcal{K}(A)=\sup_{ Re(z)>0} Re(z)\|(zI-A)^{-1}\|_{op}$, can be used to identify upper and lower estimates for the maximum amplification over time, using Kreiss Matrix Theorem~\cite{trefethen1997pseudospectra}. In particular, 
\[
\mathcal{K}(A)\le \max_{t>0}\rho(t)\le eN\mathcal{K}(A).
\]
The exact computation of the Kreiss constant is itself the object of study~\cite{apkarian2020optimizing}. A method to address the time-dependent case on a finite time interval requires the use of an augmented Lagrangian whose first variations are set to zero to obtain a set of algebraic-differential equations that can be solved forward and backward in time iteratively~\cite{schmid2007nonmodal}.

From the amplification envelope we can derive the two following indicators of resilience. The maximal amplification corresponds to the maximal magnification over time for trajectories starting in the nearby of an attractor, whereas the maximal amplification time records the
occurrence of the maximal amplification.
\begin{defn}[Maximal amplification and maximal amplification time]
\label{max_ampl_def}
	Consider the setting of Definition~\ref{ampl_envelope}. The \textit{maximal amplification} 	and \textit{maximal amplification time} are respectively defined as:
	\[\rho_{max}(t_0)=\max_{t \geq 0}\rho(t,t_0),\qquad t_{max}(t_0)=\argmax_{t \geq 0}\rho(t,t_0).\]
\end{defn}

\begin{rmk}
	The characteristic return time (see Definition~\ref{local_char_time}) and the Reactivity (see Definition~\ref{reactivity_def}) of a linear system 
	are also calculable from the amplification envelope. Indeed, for every fixed $t_0\in\R$, the first corresponds to the slope of $ln(\rho(\widetilde x,t,t_0))$ as $t \to +\infty$, while the second is equal to the slope of $ln(\rho(\widetilde x,t,t_0))$ as $t \to t_0$.  
\end{rmk}

\subsubsection*{Invariance with respect to change of coordinates} 
The construction of the amplification envelope implies that it is invariant with respect to diffeomorphisms of the phase space which preserve the Euclidean distance. This includes in particular linear changes of coordinates  $z=Qy$, with $Q$ nonsingular and orthogonal.

\subsection{Intrinsic stochastic and deterministic invariability}\label{subsec_intrinsic_variability}
The amplification envelope provides an effective tool to describe the local transient behaviour of a system close to an attractor if it is affected by an ``isolated and impulsive" perturbation. Arnoldi et al.~\cite{arnoldi2016} investigate the same transient behaviour when the system is subjected to a time-dependent (random or deterministic) forcing.  In the spirit of this paper, we generalize the presentation of~\cite{arnoldi2016} to hyperbolic solutions (see Definition~\ref{def:hyperb_sol}) up to the point where this makes sense. \par\smallskip

Consider a continuous and bounded function $A:\R\to\R^{N\times N}$ such that the linear homogeneous system 
\begin{equation}\label{eq:linear-nonautonom}
\dot y= A(t)y
\end{equation}
has an exponential dichotomy on $\R$ with projector the identity (see Definition~\ref{def:exp_dichotomy}).  This means that,  denoted by $U(t,s)$ the principal matrix solution of~\eqref{eq:linear-nonautonom} at time $s\in\R$, one has that there are constants $\alpha>0$ and $K\ge1$  such that 
\begin{equation}\label{eq:exp_dich_arnoldi}
|U(t,s)|\le Ke^{-\alpha(t-s)},\quad\text{for all }t\ge s.
\end{equation}
Firstly, consider the stochastic differential problem with additive white noise 
\begin{equation}\label{eq:Arnoldi_stochastic}
dy=A(t)y\,dt+S\,dW(t)
\end{equation}
where $S\in\R^{N\times M}$, and $ W= (W_1,\dots W_M)^\top $ is a vector of $M$ independent Brownian motions. As $t\to\infty$ the distribution of each strong solution of~\eqref{eq:Arnoldi_stochastic} converges to a stationary Gaussian distribution centered at the origin. Moreover, the covariance matrix of the system at time $t\in\R$ and initial data at $t_0\in\R$ is given by 
\begin{equation}\label{eq:covariance_matrix}
 C(t,t_0)=\int_{t_0}^{t}U(t,s)\Sigma U(t,s)^\top \,ds,
\end{equation}
where $\Sigma:=SS^\top $. 
Notice also that the differentiation of $C(t,t_0)$ with respect to $t$ shows that it solves the matrix differential equation
\begin{equation}\label{eq:covariance_differentiation}
\frac{d}{dt}C= A(t)C+CA(t)^\top+\Sigma.
\end{equation}
In particular, $C(t,-\infty)$ is the only bounded solution of~\eqref{eq:covariance_differentiation} over the whole real line, and~\eqref{eq:exp_dich_arnoldi} and~\eqref{eq:covariance_matrix} together imply that ~\eqref{eq:covariance_differentiation} has a local attractor (see for example~\cite[Theorem 1.23]{kloeden2011}) that must thus coincide with $C_*(\Sigma)=\{(t,C(t,-\infty)\mid t\in\R\}$. $C_*(\Sigma)$ shall be called the  \emph{stationary covariance} of the system. \par\smallskip

In order to construct an indicator of intrinsic resilience against stochastic perturbations, one looks at the largest stationary response among the possible perturbations of given ``magnitude".

\begin{defn}[Intrinsic stochastic invariability]\label{def_int_stoch_invar}
Consider a continuous dynamical system induced by an ordinary differential equation $\dot x =f(x)$, $x\in\R^N$, satisfying~\ref{system_l_ind} and the assumptions in {\rm Section~\ref{sec:prelimin}}. Moreover, assume that $\widetilde x$ is a locally attractive hyperbolic solution and consider the local attractor $\A=\{\widetilde x(t)\mid t\in\R\}$. Fixed a norm $\|\cdot\|$ on the matrix space $\R^{N\times N}$ (e.g.~the operatorial norm, the Froebenius norm), the \emph{stochastic variability} $\mathcal{V}_S(\A)$ and the \emph{intrinsic stochastic invariability} of the attractor $\A$ with respect to $\|\cdot\|$ are respectively defined as 
\[
\mathcal{V}_S(\A)=\sup_{\substack{\Sigma\ge0,\,\|\Sigma\|=1\\ t\in\R}}\|C(t,-\infty)\|_{op}\qquad\text{and}\qquad \mathcal{I}_S(\A)=\frac{1}{2\mathcal{V}_S(\A)},
\]
where  $C_*(\Sigma)$ is the stationary covariance of~\eqref{eq:Arnoldi_stochastic} with $A(t)={\textnormal{D}}f\big(\widetilde x(t)\big)$.
\end{defn}

\begin{rmk}\label{rmk:stoch_invariab_equilibria}
    If $\A=x^*$, then $A={\textnormal{D}}f(x^*)$, and~\eqref{eq:covariance_differentiation} does not depend on time. In this case, one can show that the linear operator $\widehat A(C):=AC+CA^\top$ has $N^2$ eigenvalues of the form $\lambda_j+\lambda_i$, where $\lambda_i,\lambda_j$ are eigenvalues of $A$, and the stationary covariance matrix $C_*(\Sigma)$ is given by the unique solution of the Lyapunov equation $AC+CA^\top+\Sigma=0$~\cite[Lemma 5.1.2]{berglund2006noise}. Therefore, one has that 
    \[
    \mathcal{V}_S(\A)=\sup_{\Sigma\ge0,\,\|\Sigma\|=1}\|-\widehat A^{-1}(\Sigma)\|.
    \]
\end{rmk}
On the other hand, one can consider a deterministic forcing of~\eqref{eq:linear-nonautonom}, that is,
\begin{equation}\label{eq:arnoldi_deterministic}
\dot y=A(t)y+g(t),
\end{equation}
where $g:\R\to\R^N$ is a bounded function. Denoted by $Y(t)$ a fundamental matrix solution of~\eqref{eq:linear-nonautonom}, note that 
\[
x(t,g)=Y(t)\int_{-\infty}^tY^{-1}(s)g(s)\,ds,\quad t\in\R,
\]
is a solution  of~\eqref{eq:arnoldi_deterministic}, which can be regarded as the \emph{stationary system response}, and its \emph{mean square deviation} from the origin (which is the global attractor of the homogeneous problem) is 
\begin{equation}\label{eq:mean_square_deviation}
m(g):=\lim_{T\to\infty}\frac{1}{T}\int_0^T|x(s,g)|^2\,ds.
\end{equation}
Arnoldi et al.~\cite{arnoldi2016} suggest to use the largest mean square deviation of $x(t,g)$ from the origin over the possible perturbations $g$ of given norm, as an indicator of resilience of the attractor. 

\begin{defn}[Intrinsic deterministic invariability]\label{def_int_det_invar}
    Consider a continuous dynamical system induced by an ordinary differential equation $\dot x =f(x)$, $x\in\R^N$, satisfying~\ref{system_l_ind} and the assumptions in {\rm Section~\ref{sec:prelimin}}. Moreover, assume that $\widetilde x$ is a locally attractive hyperbolic solution and consider the local attractor $\A=\{\widetilde x(t)\mid t\in\R\}$. The \emph{deterministic variability} $\mathcal{V}_D(\A)$ and the \emph{intrinsic deterministic invariability} of the attractor $\A$ are respectively defined as 
\begin{equation}\label{eq:intrinsic_deterministic_invariability}
\mathcal{V}_D(\A)=\sup_{\|g\|_\infty=1}2\sqrt{m(g)},\qquad\text{and}\qquad \mathcal{I}_D(\A)=\frac{1}{\mathcal{V}_D(\A)},
\end{equation}
where  $m(g)$ is the mean square deviation of the stationary response of~\eqref{eq:arnoldi_deterministic} with $A(t)={\textnormal{D}}f\big(\widetilde x(t)\big)$.
\end{defn}
\begin{rmk}
    If $\A=x^*$, then $A={\textnormal{D}}f(x^*)$, and one can use standard frequency analysis to improve the information given by~\eqref{eq:intrinsic_deterministic_invariability}. This is particularly true when one limits the possible deterministic perturbation to the so-called ``wide-sense stationary signals". Indeed, since any deterministic signal can be developed into a sum of harmonic terms,
or Fourier modes, and due to the fact that in the linear approximation, the system response to
a general perturbation is equal to the sum of the system response to the single-frequency
modes, a convexity argument yields that the perturbation generating the
largest system response is a single-frequency mode~\cite{arnoldi2016}.
Therefore, when $g$ is a single-frequency periodic forcing one obtains that 
    \[
    \mathcal{V}_D(\A)=\sup_{\omega\in\R}\|\textnormal{i}\omega-A^{-1}\|,
    \]
    where $\omega$ is the forcing frequency of $g$, $\textnormal{i}$ is the imaginary unit, and $\|\cdot\|$ is the induced matrix operator norm from the norm on $\mathbb{R}^N$.
\end{rmk}
When $\A=x^*$, an important result in~\cite{arnoldi2016} establishes a chain of order for some of the indicators of local resilience presented in this section.
\begin{prop}\label{prop:linear_ind_chain}
If $\A=x^*$ is a hyperbolic stable equilibrium, the following chain of inequalities holds true:
\[
-R_0\le \mathcal{I}_S(\A)\le \mathcal{I}_D(\A)\le EV(\A).
\]
\end{prop}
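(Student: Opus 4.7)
The plan is to break the chain into three independent comparisons and prove them in the order $\mathcal{I}_D\le EV$, $-R_0\le \mathcal{I}_S$, and $\mathcal{I}_S\le \mathcal{I}_D$, saving the hardest one for last.

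For $\mathcal{I}_D(\A)\le EV(\A)$, I would use Remark~\ref{rmk:char_return_equil} to write $EV(\A)=-\operatorname{Re}\lambda_{\mathrm{dom}}(A)$ with $A={\textnormal{D}}f(x^*)$. Decomposing $\lambda_{\mathrm{dom}}(A)=-EV(\A)+i\nu$ and choosing $\omega=\nu$, the matrix $i\omega I-A$ admits $EV(\A)$ as an eigenvalue, so $(i\omega I-A)^{-1}$ admits $1/EV(\A)$ as one. Since the induced matrix operator norm dominates the spectral radius, $\mathcal{V}_D(\A)\ge\|(i\nu I-A)^{-1}\|\ge 1/EV(\A)$, which after inversion yields the claim.

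For $-R_0\le\mathcal{I}_S(\A)$, the inequality is trivial when $R_0\ge 0$. Otherwise, I would fix any admissible $\Sigma$ and set $C:=C_*(\Sigma)$, which the integral representation shows to be symmetric and positive semidefinite. Let $v$ be a unit eigenvector of $C$ realizing $\|C\|$. Pairing the Lyapunov equation $AC+CA^\top=-\Sigma$ on both sides with $v$, and using both $Cv=\|C\|v$ and the scalar identity $v^\top Av=v^\top A^\top v$, one obtains
\[
2\|C\|\,v^\top H(A)v \;=\; -v^\top\Sigma v \;\ge\; -\|\Sigma\| \;=\; -1.
\]
Since $v^\top H(A)v\le R_0<0$, this forces $\|C\|\le 1/(-2R_0)$. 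Taking supremum over $\Sigma$ and inverting gives $\mathcal{I}_S(\A)\ge -R_0$.

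The middle inequality $\mathcal{I}_S(\A)\le\mathcal{I}_D(\A)$, equivalently $\mathcal{V}_D(\A)\le 2\mathcal{V}_S(\A)$, is the technical core. The strategy is to construct, given $\omega_0\in\R$ and a unit $u_0$ nearly realizing $\|G(\omega_0)u_0\|=\|G(\omega_0)\|$ (with $G(\omega):=(i\omega I-A)^{-1}$), a PSD $\Sigma$ of unit norm with $\|C_*(\Sigma)\|\ge\|G(\omega_0)\|/2$. Decomposing $G(\omega_0)u_0=h_R+ih_I$ and using $Ah_R=-u_0-\omega_0 h_I$ and $Ah_I=\omega_0 h_R$ (obtained by separating the real and imaginary parts of $(i\omega_0 I-A)h=u_0$), a direct calculation verifies that the PSD matrix $C:=h_Rh_R^\top+h_Ih_I^\top$ satisfies $AC+CA^\top=-(u_0h_R^\top+h_Ru_0^\top)$. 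The main obstacle is that the source produced by this natural ansatz is sign-indefinite, so it does not qualify directly as an admissible $\Sigma$: one must either apply Parseval's identity $C_*(\Sigma)=\frac{1}{2\pi}\int_{-\infty}^\infty G(\omega)\Sigma G(\omega)^*\,d\omega$ to optimize over a rank-one PSD perturbation concentrated near $\omega_0$, or split the sign-indefinite source into its PSD parts and retain the dominant one. Either route yields the sharp factor $1/2$, which cannot be improved, as shown by the one-dimensional benchmark $A=\lambda<0$ where $\mathcal{V}_D=2\mathcal{V}_S=1/|\lambda|$.
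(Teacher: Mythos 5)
The paper gives no proof of this proposition; it is stated as a citation of Arnoldi et al.~\cite{arnoldi2016}. So I assess your argument on its own terms.

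Your first two comparisons are correct. For $\mathcal{I}_D\le EV$, writing $\lambda_{\mathrm{dom}}(A)=-EV+\mathrm{i}\nu$ and noting that $\mathrm{i}\nu I-A$ then has $EV$ as an eigenvalue, so $(\mathrm{i}\nu I-A)^{-1}$ has $1/EV$ in its spectrum and hence $\mathcal{V}_D\ge\|(\mathrm{i}\nu I-A)^{-1}\|\ge 1/EV$, is exactly right. For $-R_0\le\mathcal{I}_S$, the trivial case $R_0\ge0$ is handled correctly, and for $R_0<0$ pairing the Lyapunov equation with a top unit eigenvector $v$ of $C:=C_*(\Sigma)$, using $v^\top Av=v^\top A^\top v$ to produce $2\|C\|\,v^\top H(A)v=-v^\top\Sigma v\ge-1$ and then $v^\top H(A)v\le R_0<0$, gives $\|C\|\le1/(-2R_0)$; taking the supremum over $\Sigma$ is legitimate. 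You should state explicitly that $\|\cdot\|$ is the operator norm induced by the Euclidean norm, since the step $v^\top\Sigma v\le\|\Sigma\|$ and the conclusion $\|C\|\le\cdot$ are both norm-dependent and the paper's Definition~\ref{def_int_stoch_invar} leaves the matrix norm as a choice.

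The middle inequality $\mathcal{I}_S\le\mathcal{I}_D$ is where your argument has a real gap. The ansatz $C_0=h_Rh_R^\top+h_Ih_I^\top$ and the identity $AC_0+C_0A^\top=-(u_0h_R^\top+h_Ru_0^\top)$ are correct (I verified the computation from $Ah_R=-u_0-\omega_0h_I$, $Ah_I=\omega_0h_R$), but you stop exactly where the work remains. Of the two routes you sketch, the first ("optimize over a rank-one PSD perturbation concentrated near $\omega_0$") is not meaningful here: the admissible perturbations $\Sigma$ in Definition~\ref{def_int_stoch_invar} are constant real PSD matrices, and the resulting output spectral density $G(\omega)\Sigma G(\omega)^*$ is spread over all frequencies, so there is nothing to concentrate at $\omega_0$; the Parseval formula merely evaluates $C_*(\Sigma)$ once $\Sigma$ is fixed. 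The second route, splitting $u_0h_R^\top+h_Ru_0^\top=\Sigma_+-\Sigma_-$, does not obviously yield the sharp constant: one gets $\|\Sigma_+\|+\|\Sigma_-\|=2|h_R|$ and therefore $\|C_0\|\le2|h_R|\,\mathcal{V}_S$, which, combined with the easy bound $\|C_0\|\ge|h_R|^2$, gives $|h_R|\le2\mathcal{V}_S$. This controls only the real part of $G(\omega_0)u_0$, not its modulus $|G(\omega_0)u_0|$, and so does not close to $\mathcal{V}_D\le2\mathcal{V}_S$ unless $\omega_0=0$ (where $G(\omega_0)$ is real). The one-dimensional benchmark you verify confirms that the constant $2$ is sharp, but it does not substitute for the missing step. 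As written, the proof of $\mathcal{I}_S\le\mathcal{I}_D$ is incomplete.
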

 \subsubsection*{Invariance with respect to change of coordinates} From the definitions of covariance matrix in \eqref{eq:covariance_matrix}, and of largest mean-square deviation in \eqref{eq:mean_square_deviation}, one can easily show that  intrinsic stochastic invariability and the intrinsic deterministic invariability are invariant with respect to diffeomorphisms of the phase space which preserve the Euclidean distance. This includes in particular linear changes of coordinates  $z=Qy$, with $Q$ nonsingular and orthogonal.

\subsection{Discussion}\label{local_discussion_subsection}

\begin{itemize}[leftmargin=*]
    \item Local indicators are intrinsically robust against ``small" perturbations. The reason lies in the property of persistence of the exponential dichotomy which goes under the name of \emph{roughness} (see Coppel~\cite{coppel2006dichotomies}). Note that this is a further reason justifying the notion of exponential dichotomy. Also in the case in which our original system admits only hyperbolic equilibria a sufficiently small but general time-dependent forcing perturbs the equilibria into hyperbolic trajectories whose properties of stability persist but, in general, cannot be analyzed via the sign of the eigenvalues of the variational problem anymore. In other words, the Lyapunov spectrum of an hyperbolic equilibrium is perturbed into the dichotomy spectrum~\cite{sacker1978spectral}.  
    
    \item Lundstr\"om~\cite{lundstrom2018} provides calculable definitions to approximate the recovery rate (reciprocal of the characteristic return time) and the slowest return time while at the same time estimating the distance to threshold (see Definition~\ref{def_DT}) and the volume of the basin of attraction (see Definition~\ref{latitude_volume}).

    \item Reactivity and maximal amplification are important concepts in fluid mechanics where they are used in the context of creation and evolution of local instabilities in a flow. The collection of results in the area goes under the name of nonmodal stability theory. For a concise introduction to the central techniques of nonmodal stability theory we point the interested reader to Schmid~\cite{schmid2007nonmodal}.

    \item Ives and Carpenter~\cite{ives2007} propose the idea that the dynamics in the nearby of the boundary of a basin of  attraction can be important to infer a measure of nonlocal stability for the attractor. Recalling the unforced Duffing oscillator (see Figure~\ref{fig:Duffing}) one can intuitively see that if a system lingers close to the boundary after a first perturbation, it is possibly more susceptible to tip once a new instantaneous perturbation takes place (see also Definition~\ref{def:precariousness}). The linearization of the model along the boundary is therefore suggested as a valuable tool. In practical terms, it is important to point out that identifying the boundary may be as difficult, if not more difficult, compared to identifying the attractor itself.  The study of the boundary and its distance from the attractor are the subject of Section~\ref{secbasinshape}. It is worth noting that all the definitions and properties contained therein are purely geometrical and do not involve any linearization.

\end{itemize}

\section{Basin Shape Indicators}\label{secbasinshape}
This section contains the core indicators of the classic research in ecological resilience which dates back to the groundbreaking work by Holling~\cite{holling1973}. The underlying assumption is that the considered phenomenon, and its mathematical model, admits multiple stable states and thus the phase space of the induced dynamical system can be partitioned into basins of attraction. Consequently, one aims to estimate the minimal perturbation of initial conditions which can drive a system lying on an attractor,  outside of its basin of attraction. The study of the geometrical features of the basins becomes the central focus, aiming to relate such characteristics to inherent properties of  stability of the respective attractors. \par\smallskip

This section contains three subsections and five indicators. In subsection~\ref{latitudew_subsec}, we present \emph{latitude in width}, \emph{distance to threshold} and \emph{precariousness}. In subsection~\ref{latitude_vol_susbec}, we present \emph{latitude in volume} and \emph{basin stability}. In subsection~\ref{subsec:shape} we briefly discuss the relations between the indicators presented above.\par\smallskip

All the indicators in this section are defined for a continuous dynamical system on $\R^N$ with local flow \[
\phi:\mathcal{U}\subset\R\times\R^N\to\R^N,\quad(t,x_0)\mapsto \phi(t,x_0).
\]

\subsection{Latitude in width,  distance to threshold and precariousness} \label{latitudew_subsec}
The notion of ``width" of a basin of attraction as an indicator of resilience dates back directly to the seminal work by Holling~\cite{holling1973}. Loosely speaking, the width of a basin of attraction corresponds to the length of the ``minimal" segment crossing through the attractor and intersecting (at its extrema) the boundary of the basin of attraction, if it applies  (see illustrative sketch in Figure~\ref{latitude_DT_picture}).
Although a precise mathematical formulation seems hard to find in the literature, the geometrical representations used in many works (for example Peterson et al.~\cite{peterson1998} and Walker et al.~\cite{walker2004}) permit to precisely formalize the underlying idea.

\begin{defn}[Latitude in width]\label{LW_def}
	Consider the (possibly empty) set 
	\begin{equation}\label{def:S_latitude}
	S=  \big\{(y,z)\in\R^{2N}\,\big|\,y, z \in \partial \BA, \text{ and }\alpha y+ (1-\alpha)z \in  \A\text{ for some } \alpha \in (0,1)  \big\}.
	\end{equation}
	The \textit{latitude in width} of an attractor $\A\subset\R^N$ for a continuous flow $\phi:\R\times\R^N\to\R^N$ is defined as
	\begin{equation*}
	L_w(\A)=\begin{cases}
	\infty,&\text{if }S=\emptyset\\
	\inf_{(y, z) \in S}|y-z|,&\text{otherwise}.
	\end{cases}
	\end{equation*}
	\label{def_lw}
\end{defn}
If $L_w(\A)<\infty$, the inferior in the previous formula is in fact a minimum as shown in the next result.
\begin{prop}
	\label{lw_min_inf}
	Under the notation and assumptions of  {\rm Definition~\ref{def_lw}} and if $L_w < +\infty$, we have:
	\[L_w(\A) = \inf_{(y, z) \in S} |y-z|  = \min_{(y, z) \in S} |y-z|.\]
\end{prop}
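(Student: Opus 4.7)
My plan is to show the infimum is attained by taking a minimizing sequence in $S$, extracting a convergent subsequence using the compactness of $\A$, and verifying that the limit still lies in $S$. Concretely, I would pick $(y_n,z_n) \in S$ with $|y_n - z_n| \to L_w(\A)$. By definition of $S$, for each $n$ there exist $\alpha_n \in (0,1)$ and $a_n \in \A$ with $a_n = \alpha_n y_n + (1-\alpha_n)z_n$. Since $\A$ is compact, $\{a_n\}$ is bounded, and because both $y_n$ and $z_n$ lie within distance at most $|y_n - z_n|$ from $a_n$ (which is itself bounded along the minimizing sequence), the sequences $\{y_n\}$ and $\{z_n\}$ are bounded in $\R^N$. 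Similarly $\alpha_n \in [0,1]$ is bounded. Passing to subsequences (still indexed by $n$), I would obtain $y_n \to y^*$, $z_n \to z^*$, $\alpha_n \to \alpha^* \in [0,1]$, and $a_n \to a^* \in \A$ by compactness of $\A$.

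Next, I would verify $(y^*,z^*) \in S$. Closedness of $\partial \BA$ yields $y^*, z^* \in \partial \BA$. Passing to the limit in the identity $a_n = \alpha_n y_n + (1-\alpha_n) z_n$ gives $a^* = \alpha^* y^* + (1-\alpha^*) z^*$ with $a^* \in \A$. The only thing missing is $\alpha^* \in (0,1)$ (strictly) and $y^* \neq z^*$. Here is where I would use Proposition~\ref{basin_open_prop}: since $\BA$ is open and $\A \subset \BA$ (as $\A$ is invariant and consists of points whose orbit, being $\A$ itself, converges to $\A$), we have $\A \cap \partial \BA = \emptyset$. If $\alpha^* = 0$, then $a^* = z^* \in \A \cap \partial \BA$, a contradiction; similarly $\alpha^* = 1$ is ruled out, and likewise $y^* = z^*$ would force $a^* = y^* \in \A \cap \partial \BA$. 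Hence $\alpha^* \in (0,1)$ and $(y^*,z^*)\in S$.

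Finally, continuity of the norm gives $|y^* - z^*| = \lim_n |y_n - z_n| = L_w(\A)$, so the infimum is attained at $(y^*,z^*)$, proving $L_w(\A) = \min_{(y,z)\in S} |y-z|$.

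The only delicate point is excluding $\alpha^* \in \{0,1\}$ in the limit; this is precisely where the openness of $\BA$ (Proposition~\ref{basin_open_prop}) and the fact that $\A \subset \BA$ are indispensable. Everything else is a standard compactness-plus-continuity argument, so I do not expect further obstacles.
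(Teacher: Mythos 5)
Your argument is correct and takes essentially the same route as the paper: confine a minimizing sequence to a compact set using the compactness of $\A$, extract a convergent subsequence, and invoke continuity of the norm. In fact, your proof is more careful than the paper's, which appeals directly to Weierstrass without verifying that the relevant restriction of $S$ is closed; the crucial step you make explicit — ruling out $\alpha^* \in \{0,1\}$ (and hence $y^* = z^*$) via $\A \subset \BA$ open, so $\A \cap \partial\BA = \emptyset$ — is exactly what justifies that the limit point stays in $S$, and is left implicit in the published argument.
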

\begin{proof}
	 By definition $\A$ is a compact subset of $\R^N$. Therefore, there is $\rho>L_w(\A)>0$ such that for all $a_1,a_2 \in \A$, $|a_1-a_2|<\rho$. Moreover, notice that $\partial \BA$ is nonempty since $L_w(\A)<\rho<\infty$. By the compactness of $\A$, there are $a_1,\dots, a_n\in\A$ such that 
	 \[
	 \A\subset \bigcup_{i=1}^n \overline{B}_{\rho}(a_i)=:P.
	 \]
	 In particular, notice that if $y,z\in S$ and  $|y-z|<\rho$, then $(y,z)\in P\times P$, which is a compact subset of $\R^{2N}$. Therefore, by the continuity of the norm in $\R^N$ and Weierstrass Theorem, we immediately obtain the result.
\end{proof}

\begin{figure}[h]
	\centering
	\begin{subfigure}{.33\textwidth}
		\centering
		\includegraphics[height=3.5cm]{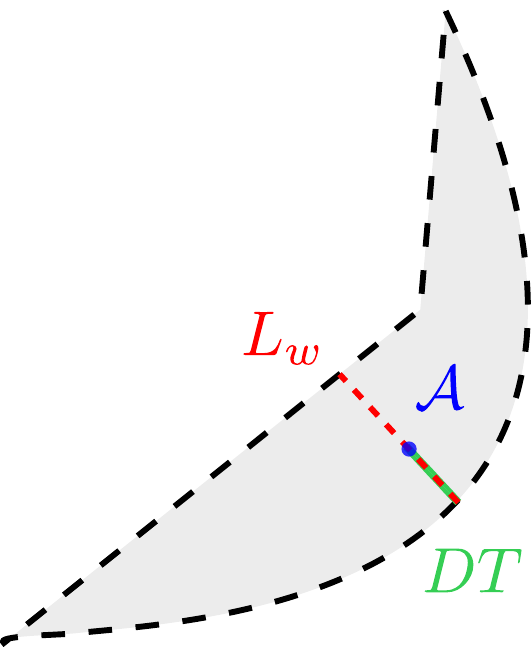}
		\caption{}
		\label{classic_lw}
	\end{subfigure}%
	\begin{subfigure}{.33\textwidth}
		\centering
		\includegraphics[height=3.5cm,trim={2cm 10cm 3.1cm 4cm},clip]{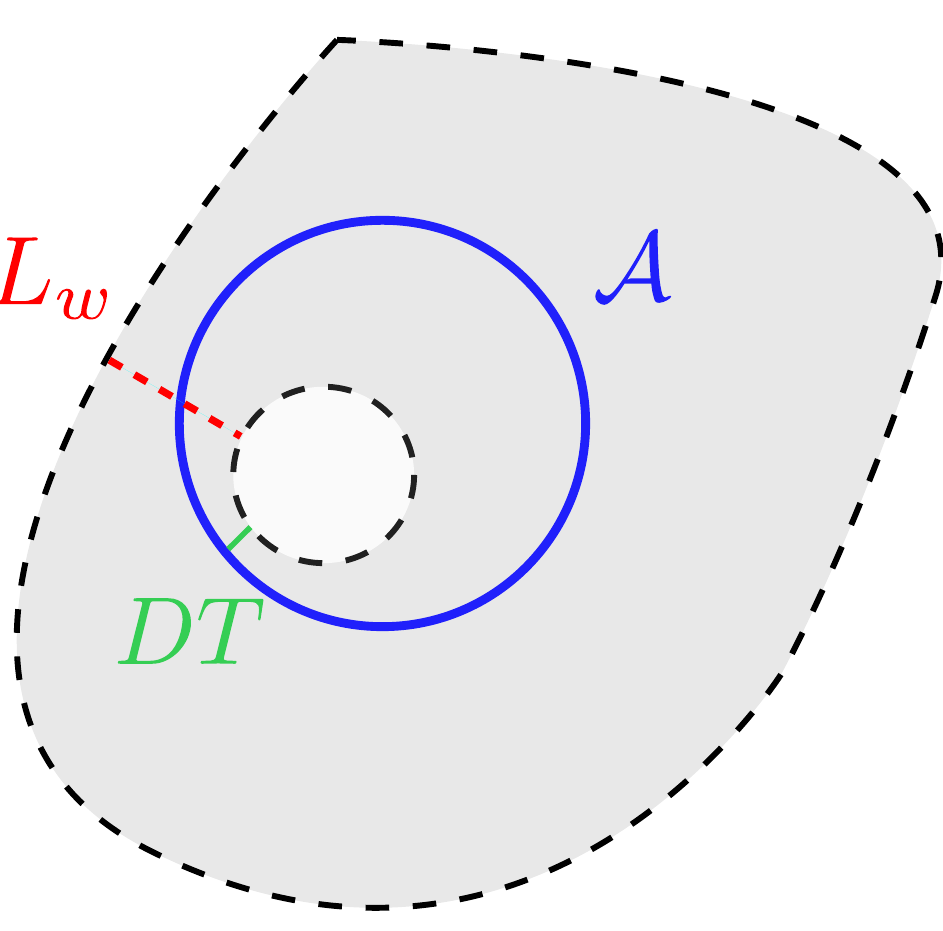}
		\caption{}
		\label{limit_cycle_lw}
	\end{subfigure}
		\begin{subfigure}{.33\textwidth}
		\centering
		\includegraphics[height=3.5cm,scale=5]{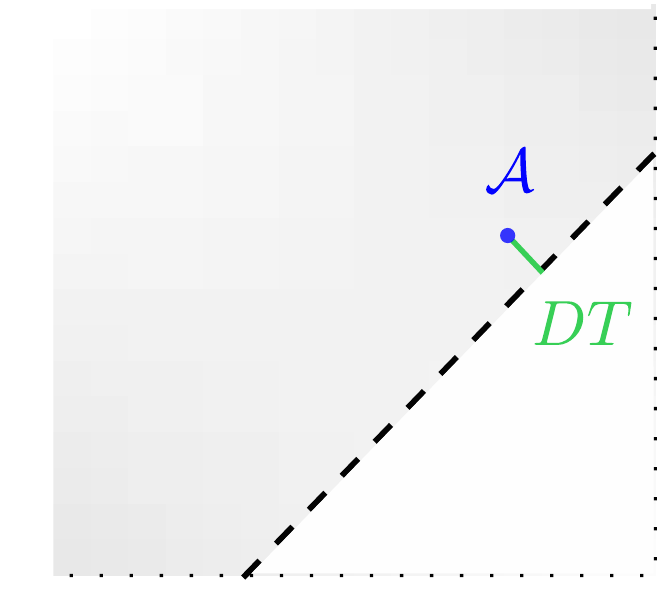}
		\caption{}
		\label{infty_lw}
	\end{subfigure}
	\caption{Three scenarios of a planar dynamical system are depicted. The grey regions represent the basins of attraction for the different attractors in blue. The corresponding boundaries are depicted as a black dashed line.
		(A) latitude in width $L_w$ for this equilibrium is given as the length of the red dashed line. Distance to threshold is given as the length of the green line. (B) system with a limit cycle attractor. $L_w$ is given as the length of the red dashed line and distance to threshold $DT$ as the length of the green line. Note that the lines have different locations. (C) example of an attractor with a basin of attraction that has infinite latitude in width indicator ${L_w= + \infty}.$ Note the distance to threshold given as the length of the green line is finite.}
	\label{latitude_DT_picture}
\end{figure}
Already in Peterson et al.~\cite{peterson1998}, while still employing the idea of width of the basin of attraction (under the name of ecological resilience), a focus was aimed at understanding the smallest perturbation of initial conditions able to drive the system away from the original attractor (or equivalently the largest perturbation of initial conditions for which the systems returns to the attractor). This approach, qualitatively, yet unequivocally presented by Beisner et al.~\cite{beisner2003}, emphasizes the importance of the points of minimal distance between an attractor and the boundary of its basin of attraction (in some cases called threshold or separatrix). The definition has the advantage of revealing that an attractor with infinite latitude in width (see Definition~\ref{def_lw}) is not necessarily ``very resilient" (see Figure~\ref{fig:inf_lw_nonresilient}).
This idea has been used consistently ever since for example in~Lundstr\"{o}m and Adainp\"{a}\"{a}~\cite{lundstrom2007}, Lundstr\"{o}m~\cite{lundstrom2018}, Kerswell et al.~\cite{kerswell2014}, Klinshov et al.~\cite{klinshov2015}, Mitra et al.~\cite{mitra2015}, and Fassoni and Yang~\cite{fassoni2017} where it is called \textit{precariousness} (on the subject see also Definition~\ref{def:precariousness} and Proposition~\ref{prop:DT_prec} below).

\begin{defn}[Distance to threshold]\label{def_DT}
	 Consider the (possibly empty) set 
	\begin{equation}\label{eq:01/12-12:01}
	S=\{(a,y)\in\R^{2N}\mid a \in \A, y \in \partial \BA \}.
	\end{equation}
	The \textit{distance to threshold} for an attractor $\A\subset\R^N$ for a continuous flow $\phi:\R\times\R^N\to\R^N$  is defined as 
	\begin{equation*}
	DT(\A) =\begin{cases}
	\infty,&\text{if }S=\emptyset\\
	\inf_{(a, y) \in S}|a-y|,&\text{otherwise}.
	\end{cases}
	\end{equation*}
\end{defn}

\begin{rmk}\label{rmk:DT_min}
    Reasoning as for the proof of Proposition~\ref{lw_min_inf} one can easily show that if $DT(\A)<\infty$, then it is attained at a point of minimum. Moreover, notice that $DT(\A)=+\infty$ only if $\partial \BA=\varnothing$ which means that $\A$ is the global attractor (see Definition~\ref{def:global_attractor}) for the system.
\end{rmk}

\begin{figure}
    \centering
    \begin{overpic}[trim={1.5cm 10cm 1.5cm 10cm},clip, width=\textwidth]{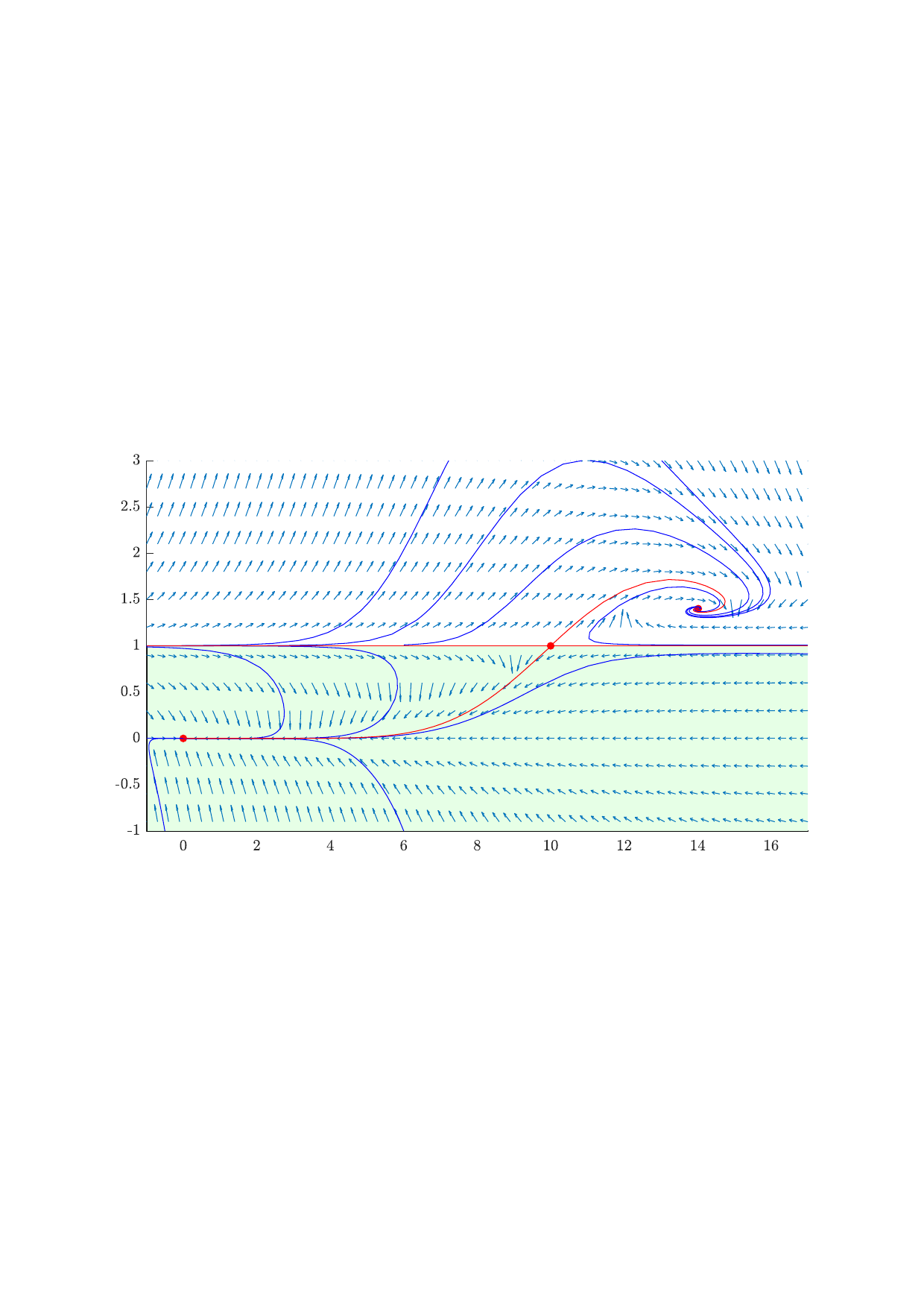}
    \put(50,0){$x_1$}
    \put(4,27){$x_2$}
    \put(57,29){$X_s$}
    \put(11,17){$X_0$}
    \put(74,31.5){$X_1$}
    \end{overpic}
    \caption{Representation of the dynamics induced by the planar system $\dot{x}_1=-x_1+10x_2$, 
$\dot{x}_2=x_2(10\exp(\frac{-x_1^2}{100})-x_2)(x_2-1)$. The system has three equilibria $X_0$, $X_s$ and $X_1$. The stable and unstable manifolds of the saddle node $X_s$ are depicted in red. The stable manifold of $X_s$ is the separatrix between the basins of attraction of $X_0$ and $X_1$, respectively painted in green and white. A qualitative behaviour of the system can be deduced via the vector field (blue arrows) and a few trajectories in solid blue.} Both stable equilibria have an infinite latitude in width, while their distance to threshold can be made as a small as wished through a suitable scaling. The resilience of this system has been thoroughly analyzed by Kerswell et al.~\cite{kerswell2014}. 
    \label{fig:inf_lw_nonresilient}
\end{figure}

When dealing with a concrete model of a real phenomenon, it is possible to tune the previous definitions according to the available information on the phenomenon. In particular, if one can anticipate the set of possible perturbed initial conditions, the calculation of the latitude in width and the distance to threshold can be restricted to a suitable subset $C$ of the phase space called \emph{region of interest}. For example, such an adjustment on the distance to threshold has been explicitly presented in~\cite{meyer2016} where the set $S$ in~\eqref{eq:01/12-12:01} is substituted by 
\begin{equation}\label{set_SC}
    S_C=\{(a,y)\in\R^{2N}\mid a \in \A, y \in \partial \BA\cap C\},
\end{equation}
for some $C\subset \R^N$. 
Likewise, the latitude in width can be modified with respect to the region of interest. Then, for some fixed $C\subset \R^N$, the set $S$~\eqref{def:S_latitude} is changed for
\[
    S_C=\{(y,z): (y,z) \in S \text{ and } y \in C\}.
\]
 
\begin{exmpl}
	\label{miniDT}
	\quad
		\begin{figure}[h!]
	\begin{center}
		\includegraphics[trim={0cm 0cm 0cm 0cm},clip,scale=0.9]{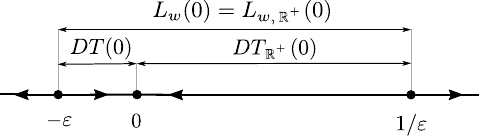}
	\end{center}
	\caption{Sketch of the phase space for the dynamical system induced by~\eqref{systemik} and of the distance to threshold and latitude in width of the attractor $\A=\{0\}$. Additionally, we can see that when we specify the anticipated perturbations as $\R^+,$ the distance to threshold changes, whereas, the latitude in width remains the same (for details, see Example~\ref{miniDT}).}
		\label{fig:dt_lw_1dim_roi}
	\end{figure}
	\begin{equation}\label{systemik}
	\begin{aligned}
	\dot{x}&= x(x-1/\varepsilon)(x+\varepsilon) 	\quad \quad \text{where }\quad  x \in \R, \varepsilon \in (0,1]
	\end{aligned}
	\end{equation}
	The dynamical system induced by~\eqref{systemik} has a stable equilibrium at $x=0$, which we consider as the relevant attractor, and two unstable fixed points at $-\varepsilon$ and $1/\varepsilon.$ See Figure~\ref{fig:dt_lw_1dim_roi} for a sketch of the phase space.  Notice that when $\varepsilon$ becomes smaller, the distance to threshold $DT(0)=\varepsilon$ decreases, but the latitude in width $L_w(0)=1/\varepsilon+\varepsilon$ increases. 
    \noindent If the set of anticipated perturbed initial conditions $C=\R^+$ is considered, the distance to threshold becomes $DT_{\R^+}(0)=1/\varepsilon,$ since we have the set $S_C=\{(0,1/\varepsilon)\}$ (see~\eqref{set_SC}), while the latitude in width $L_{w,\R^+}(0)$ remains the same.
	\noindent 
	On the other hand, if we fix $C=(0,\varepsilon),$ the set $S_C$ becomes empty for both distance to threshold and latitude in width and therefore  $DT_{(0,\varepsilon)}(0)=L_{w,(0,\varepsilon)}(0)=+\infty.$
\end{exmpl}

\begin{exmpl}\label{exmpl:dt1_dt2_attractor} Let us consider the example presented in Remark~\ref{rmk:different_attractors} (see also Figure~\ref{fig:DT1_2}). Depending on the different choice of the local attractor we may obtain different values of the distance to threshold $DT.$ If we consider only the limit cycle of radius $1$ as the attractor $\A_1,$ the unstable equilibrium at the origin is part of the basin's boundary $\partial \mathcal{B}(\mathcal{A}_1).$ Therefore, the distance to threshold $DT(\A_1)=1$. However, if we consider as a local attractor the set of points in the limit cycle and the origin, $\A_2=\A_1\cup \{(0,0)\},$ or the whole closed ball of radius one $\A_3=B_1$, we obtain $DT(\A_2)=DT(\A_3)=2.$ It is obvious that in terms of actual resilience of the system's state against a perturbation the origin plays a negligible role.
	\begin{figure}
		\centering
		\includegraphics[trim={0 0cm 0 0cm},clip,scale=0.25]{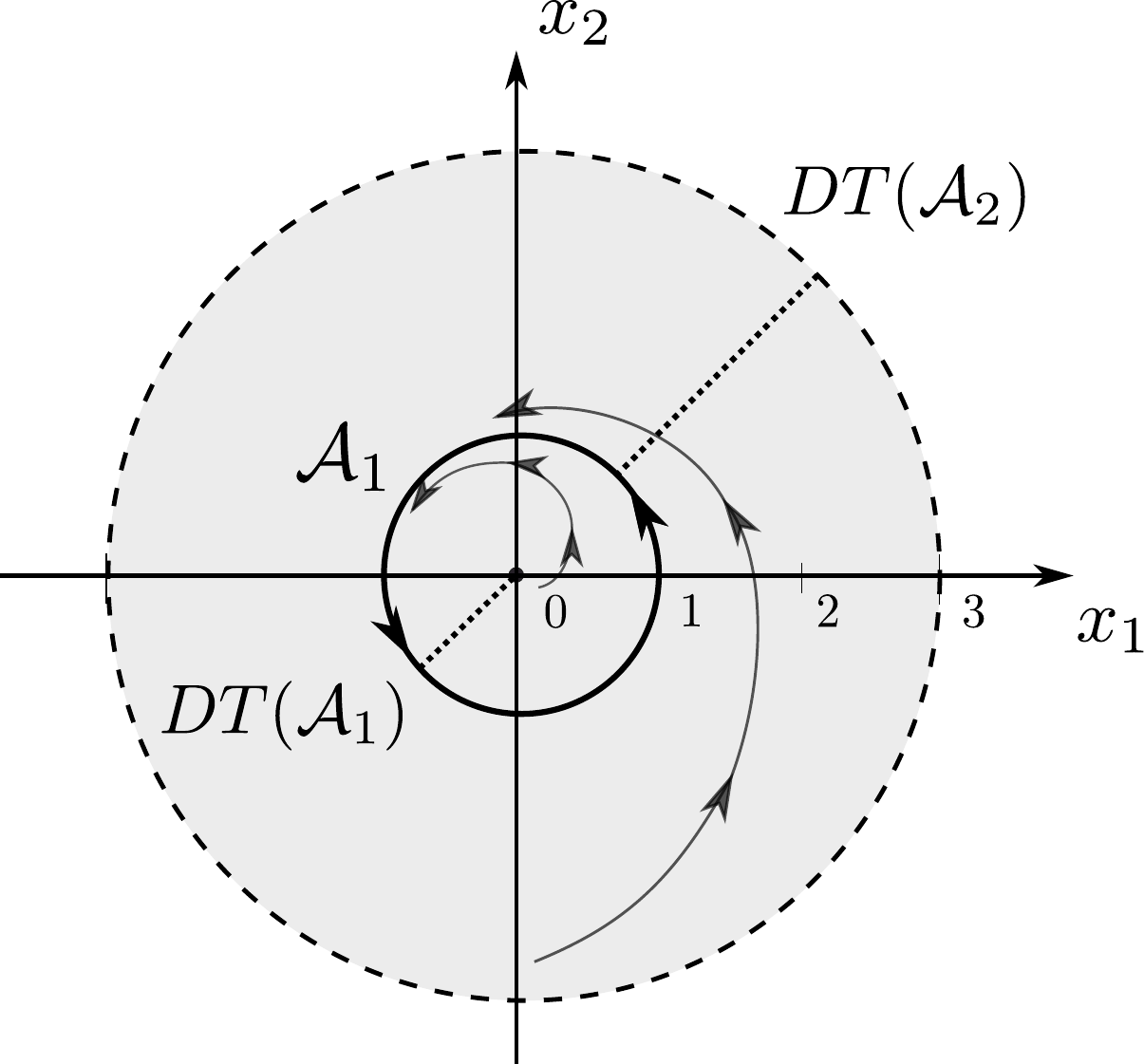}
		\caption{
		Sketch of the phase space for the dynamical system induced by~\eqref{eq:28/11-12:54}, and representation of the distance to threshold depending on the choice of the local attractor (see  Remark~\ref{rmk:different_attractors}).
		For the local attractor $\A_1$ consisting of the points in the limit cycle of radius $1$, $DT(\A_1)=1$. For the local attractor $\A_2=\A_1\cup \{(0,0)\}$, $DT(\A_2)=2.$  The choice of the attractor $\A_2$ admits to consider only distances to the ``significant" parts of the boundary.}\label{fig:DT1_2}
	\end{figure}
	
\end{exmpl}

Instead of focusing on an attractor and its basin, in practice it is convenient to look at the current state of the system within the basin. The idea of studying the minimal distance from the boundary of the basin has qualitatively presented by Walker et al.~\cite{walker2004}, where it is described as only one of four components of ecological resilience together with the latitude in width (see Definition~\ref{LW_def}), the resistance of a gradient system (see Definition~\ref{potential_def}) and the cross-scale interaction among the previous components and eventual subsystems. We push the idea in~\cite{walker2004} a bit forward and propose a mathematical definition which probes also the points outside the basin of attraction. This is not only a reasonable generalization, as it also allows to quantify the difficulty for a point outside the basin to enter it after a given perturbation, but  it also allows us to later establish a connection with a recently presented indicator (see Definition~\ref{def_flowkick_boundary}).
\begin{defn}[Precariousness]\label{def:precariousness}
The function
$P_\A:\R^N\to\R\cup\{\infty\}$ defined by
\[
x_0\mapsto P_\A(x_0)=\begin{cases}
\infty&\text{if }\partial\BA=\varnothing,\\
\displaystyle\inf_{y\in\partial\BA}|x_0-y|&\text{if }\partial\BA\neq\varnothing\text{ and } x_0\in\BA\\
\displaystyle-\inf_{y\in\partial\BA}|x_0-y|&\text{if }\partial\BA\neq\varnothing\text{ and } x_0\notin\BA\\
\end{cases} 
\]
is called the \emph{precariousness} of the point $x_0\in\R^N$ with respect to the basin of attraction $\BA$.
\end{defn}

The following simple proposition establishes a relation between the distance to threshold and the asymptotic value of precariousness.

\begin{prop}\label{prop:DT_prec}
Consider a continuous dynamical system $\phi$ on $\R^N$. If $\partial\BA$ is nonempty, then 
\[
\lim_{t\to\infty}P_\A\big(\phi(t,x_0)\big)\ge DT(\A),\quad\text{for all }x_0\in\BA,
\]
and the equality holds if every trajectory in $\A$ is dense.
\end{prop}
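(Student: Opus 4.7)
The plan is to interpret the stated $\lim$ as $\liminf$ (which is the natural reading since $P_\A$ may oscillate along the orbit as $\phi(t,x_0)$ approaches different points of $\A$), and then prove the inequality by an approximation argument based on the convergence to $\A$. First I would note that for $x_0\in\BA$ we have $\omega(x_0)\subset\A\subset\BA$, and by invariance of the omega limit set applied to $\phi(t,x_0)$ it follows that the whole forward orbit remains in $\BA$. Hence the relevant case of the definition of precariousness is $P_\A(\phi(t,x_0))=\inf_{y\in\partial\BA}|\phi(t,x_0)-y|\ge 0$ for all $t\ge 0$.

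For the lower bound, fix $\varepsilon>0$. Since $x_0\in\BA$, we have $\dist(\phi(t,x_0),\A)\to 0$ as $t\to\infty$, so there exists $T>0$ such that for every $t>T$ there is a point $a_t\in\A$ with $|\phi(t,x_0)-a_t|<\varepsilon$. By the triangle inequality, for every $y\in\partial\BA$,
\[
|\phi(t,x_0)-y|\ge |a_t-y|-|\phi(t,x_0)-a_t|\ge |a_t-y|-\varepsilon.
\]
Taking the infimum over $y\in\partial\BA$ and using that $(a_t,y)$ lies in the set of admissible pairs for $DT(\A)$ (see~\eqref{eq:01/12-12:01}), I obtain $P_\A(\phi(t,x_0))\ge DT(\A)-\varepsilon$ for all $t>T$. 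Letting $\varepsilon\to 0$ yields $\liminf_{t\to\infty}P_\A(\phi(t,x_0))\ge DT(\A)$.

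For the equality under the density assumption, I would first show that $\omega(x_0)=\A$. Indeed, pick any $y\in\omega(x_0)\subset\A$; since $\omega(x_0)$ is forward invariant, the whole orbit $\{\phi(t,y):t\ge 0\}$ lies in $\omega(x_0)$, and by density of this orbit in $\A$, together with closedness of $\omega(x_0)$, we conclude $\A\subset\omega(x_0)$, hence equality. Now, given $\varepsilon>0$, by Remark~\ref{rmk:DT_min} (or directly from the definition) pick $a\in\A$ and $y\in\partial\BA$ with $|a-y|<DT(\A)+\varepsilon$. Since $a\in\omega(x_0)$, there exists a sequence $t_n\to\infty$ with $\phi(t_n,x_0)\to a$. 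Then
\[
P_\A(\phi(t_n,x_0))\le |\phi(t_n,x_0)-y|\longrightarrow |a-y|<DT(\A)+\varepsilon,
\]
so $\liminf_{t\to\infty}P_\A(\phi(t,x_0))\le DT(\A)+\varepsilon$. Since $\varepsilon$ was arbitrary, combining with the first inequality gives equality.

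The main obstacles I would expect are bookkeeping ones rather than conceptual: first, clarifying the meaning of $\lim$ in the statement (one cannot in general upgrade to a genuine limit because $P_\A$ oscillates as $\phi(t,x_0)$ samples different points of $\A$ at different distances from $\partial\BA$); second, being careful that the $DT(\A)$ infimum in~\eqref{eq:01/12-12:01} need not be attained unless $\partial\BA$ meets a bounded region, so the equality argument must be phrased in terms of minimizing sequences; and third, verifying that $\omega(x_0)=\A$ really follows from the hypothesis that every trajectory in $\A$ is dense, which is where the forward invariance and closedness of $\omega(x_0)$ do all the work.
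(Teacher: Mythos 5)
Your proof is correct and follows essentially the same route as the paper's: the lower bound comes from a triangle-inequality comparison between $\phi(t,x_0)$ and nearby points of the attractor (the paper passes through $\omega(x_0)\subset\A$, you go to $\A$ directly; either works), and the equality comes from showing $\omega(x_0)=\A$ under the density hypothesis and then exhibiting a sequence of times along which $P_\A(\phi(t_n,x_0))$ approaches $DT(\A)$. Two of your auxiliary remarks are actually needed and the paper leaves them implicit: reading the $\lim$ as a $\liminf$ (indeed, for e.g.\ a periodic attractor $P_\A(\phi(t,x_0))$ oscillates and the full limit need not exist, so $\liminf$ is the right reading), and verifying that $\phi(t,x_0)\in\BA$ for all $t\ge 0$ so that the first case of Definition~\ref{def:precariousness} is the one that applies.
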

\begin{proof}
The first inequality is a direct consequence of the definition of basin of attraction (see Definition~\ref{attractor_def}) and of distance to threshold (see Definition~\ref{def_DT}) i.e., for any $x_0\in\BA$,
\[
\begin{split}
    \lim_{t\to\infty}P_\A\big(\phi(t,x_0)\big)&=\lim_{t\to\infty} \inf_{y\in\partial \BA}|\phi(t,x_0)-y|
    =\lim_{t\to\infty} \inf_{\substack{x\in\omega(x_0),\\y\in\partial \BA}}|\phi(t,x_0)\pm x-y|\\
    &\ge\inf_{\substack{x\in\omega(x_0),\\y\in\partial \BA}}|x-y|\ge \inf_{\substack{a\in\A,\\y\in\partial \BA}}|a-y|=
DT(\A).
\end{split}
\]

 If all trajectories in $\A$ are dense, the equality is a direct consequence of the fact that $\omega(x_0)\subset\A$ is invariant for the flow and thus it is dense in $\A$.
\end{proof}
 \subsubsection*{Invariance with respect to change of coordinates}The indicators latitude in width, distance to threshold and precariousness are invariant with respect to diffeomorphisms of the phase space which preserve the Euclidean distance. This includes in particular linear changes of coordinates  $z=Qy$, with $Q$ nonsingular and orthogonal.
\subsection{Latitude in volume and Basin stability}\label{latitude_vol_susbec}
For one-dimensional dynamical systems, the latitude in width of a basin of attraction (see Definition~\ref{LW_def}) coincides, in fact, with its Lebesgue measure $\mu(\BA)$ (recall that a basin of attraction is always an open set and therefore Lebesgue-measurable; see Proposition~\ref{basin_open_prop}). The notion acquires special interest when the study can be restricted to a measurable set $C\subset\R^N$ of finite measure since one can interpret the ``relative volume" of the basin of attraction $\BA$ (or the portion of $\BA$ which lies in $C$) in terms of the probability that a perturbed initial condition still belongs to $\BA$.

According to Grümm~\cite[Page 4]{gruemm1976},
Holling and collaborators are the first to use this approach.
Grümm~\cite[Page 7]{gruemm1976}, proposes to measure an attractor's resilience as the volume of its basin of attraction.  
Wiley et al.~\cite{wiley2006}, and Fassoni and Yang~\cite{fassoni2017} suggest to measure the volume in a relative sense to some bounded subset, as follows.
\begin{defn}[Latitude in volume] \label{latitude_volume}
	Let $C\subset\R^N$ be Lebesgue-measurable and such that $0<\mu(C)<\infty$.  
	We define the \textit{latitude in volume} of the attractor $\A$ with respect to the \emph{region of interest} $C$ as: 
	\[L_v(\A,C) = \frac{\mu(\BA\cap C)}{\mu(C)}.\]
	$L_v$ returns a nondimensional value in $[0,1]$. Particularly, $L_v(\A,C)=0$ if and only if all the trajectories starting in $C$ (except at most those starting in a negligible subset of $C$) do not converge to $\A$ as $t\to\infty$. On the other hand, $L_v(\A,C)=1$ if and only if all the trajectories starting in $C$ (except at most those starting in a negligible subset of $C$) converge to $\A$ as $t\to\infty$.
\end{defn}

\begin{rmk}[Region of interest]
	\label{region_of_interest}
	The ``region of interest" $C$ implicitly reveals the characteristics of the models to whom the indicator of latitude in volume should be applied. The system at hand should be at most subjected to isolated impulsive perturbations of bounded magnitude $\delta>0$, where the term isolated means that the interval of time between two occurrences should be longer than the transient required by the system to reach an ``indistinguishable proximity" to the attractor $\A$ from any point in the basin of attraction. Then, the subset $C$ is defined as the set of possible perturbed initial conditions from the attractor~\cite{menck2013}, i.e.~$B_\delta(\A)$. If additional information on the probability distribution of such perturbations is available, the indicator can be refined (see Definition~\ref{basin_stab_def}).
	On the other hand, this indicator assumes additional meaning in the case of multi-stable systems ~\cite{wiley2006, fassoni2017}. In this case the basin's volume can be interpreted as a measure of the ``attractor's relevance" in the phase space, where it competes against other attractors and their basins over the phase space.  Thus, the region of interest should include all the (relevant) attractors $\A_i$, $i=1,\dots,\nu$  (assuming that they are included in a set of bounded measure) as well as the possible perturbed initial conditions of magnitude $\delta$ from them, i.e. $\bigcup_{i=1}^\nu B_\delta(\A_i)\subset C$. Even so, there is no consensus in the literature as how to choose the region $C$ itself. For example, in~\cite{fassoni2017}  the smallest $N$-dimensional interval $I$ containing $\bigcup_{i=1}^\nu B_\delta(\A_i)$ is considered (see the conceptual sketch in Figure~\ref{far_volume2}). This approach privileges the immediate calculation of the measure of $I$.
\end{rmk}
\begin{figure}[h!]
	\centering
	\begin{subfigure}{0.49\textwidth}
		\includegraphics[height=4cm,trim={0cm -1cm 0cm -3cm},clip]{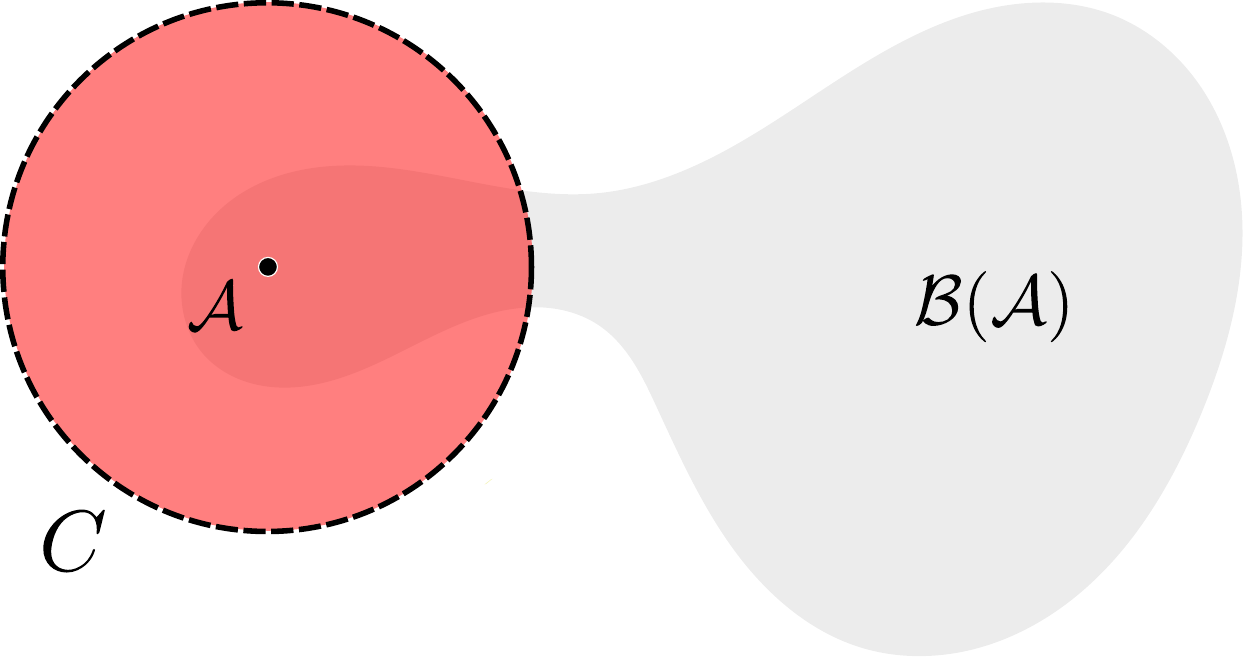}
		\caption{}
		\label{far_volume}
	\end{subfigure}
	\begin{subfigure}{.49\textwidth}
		\centering
		\includegraphics[trim={0cm 0cm 0cm 0cm},clip,height=4cm]{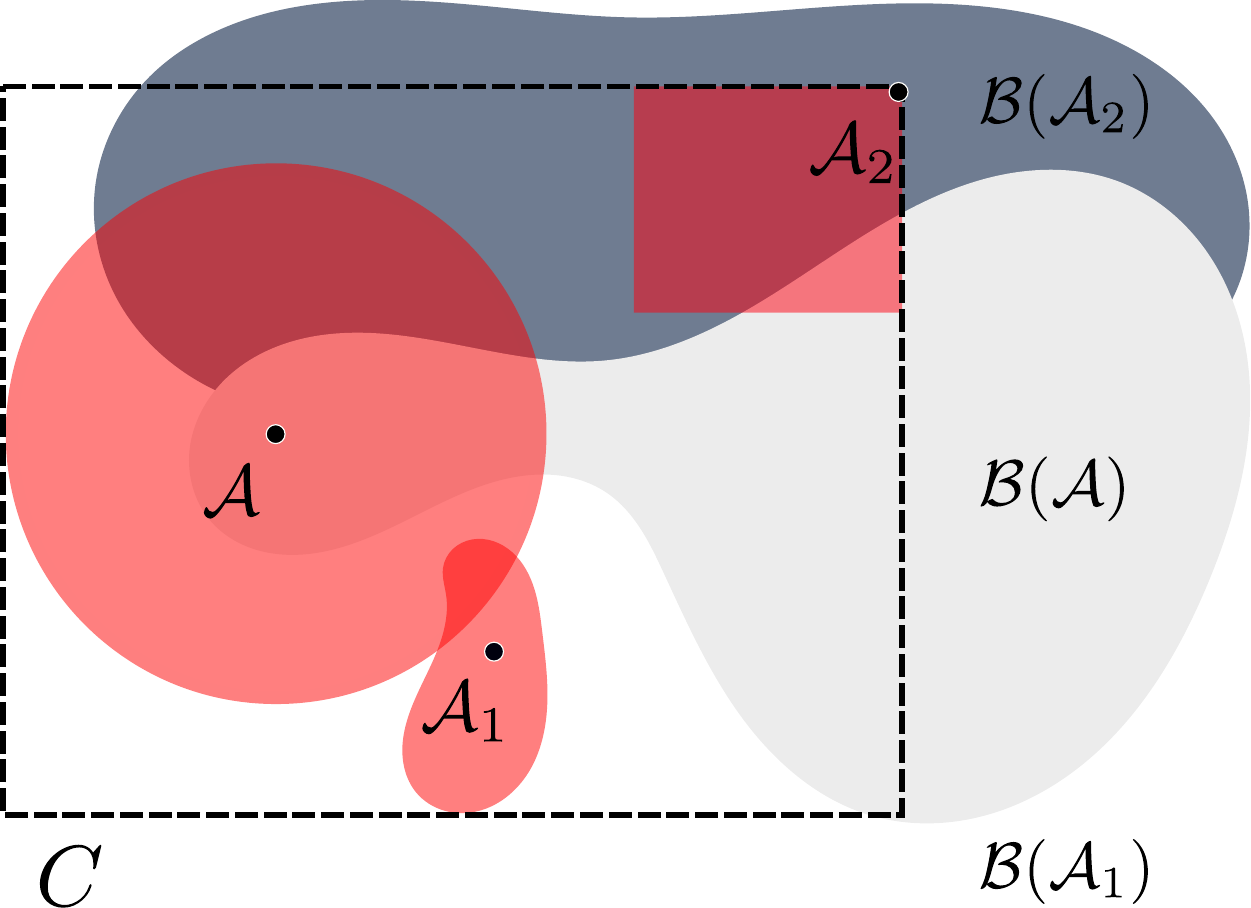}
		\caption{}
		\label{far_volume2}
	\end{subfigure}
	\caption{Depiction of two different approaches in choosing the region of interest $C$ depending on the available information. The relevant attractor is sketched as a black dot and identified by the symbol $\A$, whilst its basin of attraction corresponds to the gray area denoted by the symbols $\BA$. The region of interest $C$ is shown as the area encircled by a black dashed line and denoted by $C$.
		(A) The latitude in volume of the attractor $\A$ is calculated with respect to the region of interest $C$ corresponding to the set of expected perturbations of the initial conditions (red region). 		
		(B) The latitude in volume of the attractor $\A$ is calculated  with respect to the  $n-$dimensional interval $C$ containing also the further attractors $\A_1,\A_2$ as well as the set of each local attractor's expected perturbed initial conditions (red regions).}
	\label{volumes_fig}
\end{figure}

\label{basin_stability_subsec}
Menck et al.~\cite{menck2013,menck2014} propose a variation of the latitude in volume (see Definition~\ref{latitude_volume}) where a weight over the phase space is considered, according to an \textit{a priori} chosen probability density $\rho$ of the expected perturbations of initial conditions from the attractor. This indicator and its numerical calculation has been investigated in several works including for example Lundstr\"{o}m~\cite{lundstrom2018}, Lundstr\"{o}m and Adainp\"{a}\"{a}~\cite{lundstrom2007},  Evans and Swartz~\cite{evans2000}, and Mitra et al.~\cite{mitra2017}.

\begin{defn}[Basin stability]\label{basin_stab_def}
	Let $\rho: \R^N\to \R^+$ be a Lebesgue integrable function such that $\int_{\R^N} \rho(x)\, dx =1$. The \textit{basin stability} $\BS$ of an attractor $\A$ with respect to $\rho$ is defined as: 	 
	\begin{equation}\label{basin_stab_eq}
	S_\mathcal{B(A)}(\rho)= \int_{\R^N}  \chi_{\mathcal{B(A)}}(x)\rho(x) \, dx,
	\end{equation}
	where $\chi_Z(x)$ is the characteristic function of  $Z \subset \R^N$.  $S_\mathcal{B(A)}$ returns a nondimensional value in $[0,1]$.
\end{defn}

\begin{rmk}
Given $C\subset\R^N$ Lebesgue-measurable and such that $0<\mu(C)<\infty$ and considered 
    \[
	\rho_C(x) = 
	\begin{cases}
	1/\mu(C) & \text{for } x \in C, \\
	0 & \text{else,}\\
	\end{cases}
	\]
	one immediately has that $S_\mathcal{B(A)}(\rho_C)=L_v(\A,C)$.
\end{rmk}

\subsubsection*{Invariance with respect to change of coordinates} Both, basin stability  and latitude in volume  are not invariant with respect to the change of coordinates.
Nevertheless, measure-preserving diffeomorphisms of $\R^N$ allow to preserve the value of the indicators. However note, that then also the region of interest $C$ (or function $\rho,$ in the case of basin stability) have to be transformed accordingly.

\subsection{Discussion} \label{subsec:shape}

\begin{itemize}[leftmargin=*]
\item Basin shape indicators, while being a useful intuitive tool, should be treated very carefully. Indeed, often a real-world system is inherently ``open", meaning that it is constantly subjected to a time-dependent forcing (deterministic and/or random). Hence, the boundary of the basin of attraction of the unperturbed problem may not have a dynamical meaning and the indicator can become misleading. A concrete example of this fact can be recognized in the phenomenon of tipping induced by nonautonomous terms as treated in Section~\ref{sec:parameters}. Furthermore, even if this does not apply and the system is subject to only instantaneous isolated perturbations, the estimation of a basin's boundary is usually a difficult task. For example Kerswell et al.~\cite{kerswell2014} use a variational method employing the maximal amplification (see Definition~\ref{max_ampl_def}) to estimate the distance to threshold.
On the other hand, basins of attraction with fractal-like boundaries (or intermingled and riddled basins, which do not arise for trapped attractors but are possible for the Milnor's definition) are discussed in the work of Schultz et al.~\cite{schultz2017}.

\item The following result establishes a direct relation between latitude in width, latitude in volume and distance to threshold. It is intuitively clear that the latter delivers the safest information (see Figure~\ref{fig:flower_basin}). Nevertheless, we include a brief proof as it seems to not be anywhere in the literature. 
\begin{figure}
		\includegraphics[scale=0.42]{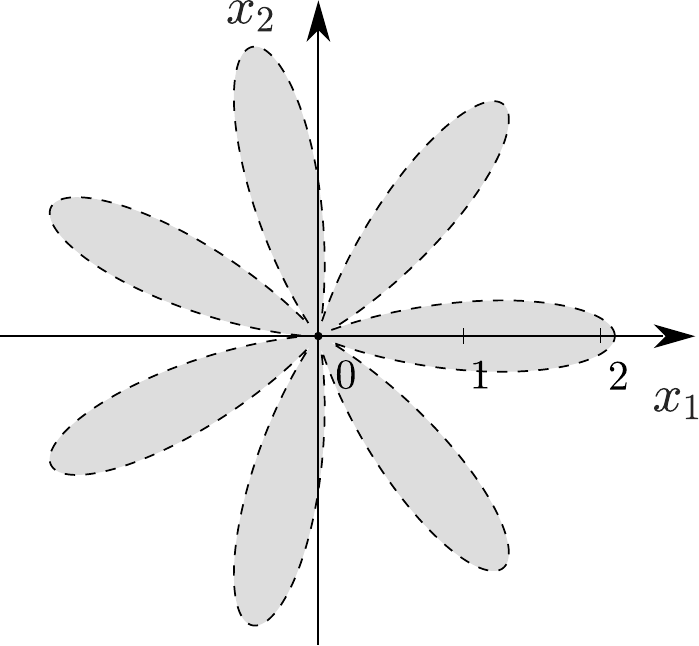}
		\caption{Depiction of the phase space of the system induced by $\dot{r}=r(r-cos(7\varphi )-(1+\varepsilon))$, $\dot{\varphi}=0$, where $(r,\varphi) \in [0,\infty) \times [0,2\pi)$ and $ \varepsilon>0$. The problem has an asymptotically stable equilibrium at the origin. The basin of attraction is in a shape resembling a ``flower", and has a relatively big volume. However, the distance to threshold is only $DT=\ep.$}
		\label{fig:flower_basin}
	\end{figure}

\begin{prop}\label{prop:DT_width_volume}
	 Consider a continuous dynamical system $\phi$ on $\R^N$. Then, for a given attractor $\A$ and for any  $ C\subset\R^N$ measurable with $0<\mu(C)<\infty$ we have that
	\[
	2 DT (\A)\leq L_w(\A),\quad\text{and}\quad  L_v(\A, C)\le L_v\big(\A, B_{DT}(\A)\big)=1.
	\]  
\end{prop}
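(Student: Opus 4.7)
The proof naturally splits along the two assertions.

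For the inequality $2DT(\A) \le L_w(\A)$, the plan is to unpack the definitions and exploit the fact that the three points in the latitude-in-width definition are collinear. If $S=\varnothing$ then $L_w(\A)=\infty$ and the inequality is vacuous, so assume $S\neq\varnothing$ and pick any $(y,z)\in S$. By definition there exists $\alpha\in(0,1)$ with $a:=\alpha y+(1-\alpha)z\in\A$. Since $a$ lies on the segment joining $y$ and $z$, we have
\[
|y-z| = |y-a| + |a-z|.
\]
Because $a\in\A$ and $y,z\in\partial\BA$, both summands are bounded below by $DT(\A)$ (using Definition~\ref{def_DT}). Hence $|y-z|\ge 2DT(\A)$, and taking the infimum over $(y,z)\in S$ (which is attained by Proposition~\ref{lw_min_inf} in the finite case) yields the first claim.

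For the volume statement, the trivial half is $L_v(\A,C)\le 1$, which follows directly from $\BA\cap C\subseteq C$ and the definition of $L_v$. The content is showing $L_v(\A,B_{DT}(\A))=1$, for which the key lemma is the geometric inclusion
\[
B_{DT(\A)}(\A)\subseteq \BA.
\]
I would prove this by contradiction: suppose some $x\in B_{DT(\A)}(\A)$ fails to lie in $\BA$. Pick $a\in\A\subseteq\BA$ with $|x-a|<DT(\A)$ and look at the line segment $\gamma:[0,1]\to\R^N$ from $a$ to $x$. Since $\BA$ is open (Proposition~\ref{basin_open_prop}) and $\gamma(0)\in\BA$ while $\gamma(1)\notin\BA$, a standard connectedness argument (applied to $\gamma^{-1}(\BA)$ inside the connected $[0,1]$) produces $t^*\in(0,1]$ with $\gamma(t^*)\in\partial\BA$. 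Then $|a-\gamma(t^*)|\le|a-x|<DT(\A)$, contradicting the definition of $DT(\A)$. With the inclusion in hand, $\BA\cap B_{DT}(\A)=B_{DT}(\A)$, which gives $L_v(\A,B_{DT}(\A))=1$.

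The only step that requires any real care is the connectedness argument for $B_{DT}(\A)\subseteq\BA$, together with a brief handling of degenerate cases: if $DT(\A)=\infty$ then $\partial\BA=\varnothing$ (Remark~\ref{rmk:DT_min}) and $\BA=\R^N$, so both claims are immediate; and if $\mu(B_{DT}(\A))=\infty$ the statement $L_v(\A,B_{DT}(\A))=1$ is to be read either by restricting to a bounded portion or by noting that the inclusion already forces the entire fattening to lie in $\BA$. Everything else is bookkeeping on the definitions.
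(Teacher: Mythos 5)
Your argument follows essentially the same route as the paper's: the first inequality is obtained by splitting $|y-z|=|y-a|+|a-z|$ at the attractor point $a$ on the segment and bounding each piece below by $DT(\A)$, and the second rests on the inclusion $B_{DT(\A)}(\A)\subseteq\BA$. The only cosmetic difference is that you work with an arbitrary $(y,z)\in S$ and pass to the infimum at the end (so Proposition~\ref{lw_min_inf} is not strictly needed), and you spell out the connectedness contradiction for the inclusion, which the paper merely asserts as easy.
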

\begin{proof}
	We let $S_{w}$ and $S_{DT}$ denote the sets introduced in Definition~\ref{def_lw} and~\ref{def_DT}, respectively. Firstly, notice that if $DT(\A)= +\infty$, then $S_{DT}= \emptyset$. Therefore,  the basin's boundary $\partial B(A) = \emptyset$ and thus also $S_{w}= \emptyset$ and $L_w=+\infty$. On the other hand, if $DT(\A)<\infty$ and $L_w(\A)=+\infty$ the aimed inequality is trivially true.	Assume now that $DT(A)<+\infty$ and $L_w(A)<+\infty$. Due to Proposition~\ref{lw_min_inf} and Remark~\ref{rmk:DT_min} there is $(y,z)\in S_{w}$ such that $L_w(\A)=|y-z|$ and $(u,v)\in S_{DT}$ such that $DT(\A)=|u-v|$. Let $\alpha^*\in(0,1)$ be such that $x= \alpha^* y+ (1-\alpha^*)z \in  \A $. Then, we have that $|u-v|\le \min\{|y-x|,|x-z|\}$. Therefore, 
	\[2DT(\A)=2|u-v|\le |y-x|+|x-z|=|y-z|=L_w(\A),\]
	which is the first aimed inequality. In regards to the second inequality, an argument by contradiction allows to easily show that  $B_{DT}(\A) \subset \BA$. Thus, we have
	\[
	L_v(\A,B_{DT}(\A)) = \frac{\mu(\BA\cap B_{DT}(\A))}{\mu(B_{DT}(\A))}= \frac{\mu(B_{DT}(\A))}{\mu(B_{DT}(\A))}=1.
	\]
	On the other hand, \[1=\max\{	L_v(\A,C)\mid C\subset\R^N,\text{ measurable with } 0<\mu(C)<\infty\}\]
	which immediately gives the second inequality.
\end{proof}

\end{itemize}

\section{nonlinear transient dynamics}\label{sec:transient}
The indicators of resilience introduced in the previous sections focus either on the properties of the linearized flow, or on the geometrical features of the basin of attraction. However, they do not take into account the nonlinear transient dynamics within the basin, which may play a crucial role. In order to better exemplifies this idea let us briefly present an example from Meyer and McGehee~\cite{meyer2020intensity}. Consider the differential equations
\[
\dot x=f(x)=\begin{cases}
x&\text{if }x<0,\\
\pi^{-1}\sin(\pi x)&\text{if }0\le x\le 1,\\
1-x&\text{if }x\ge 1,\end{cases}\quad\text{and}\quad\dot x=g(x)=-x(x-1).
\]
The dynamical systems induced by both equations have an unstable equilibrium at $x=0$ and a stable equilibrium at $x=1$ . Therefore the distance to threshold for $\A=1$ is the same for both systems, namely $DT(\A)=1$. Moreover, the characteristic return times are also the same $T_R(\A)=1$. Nonetheless, the two functions $f$ and $g$ attain different maxima, respectively $\mu=\pi^{-1}$ and $\widehat \mu=1/4$. For a given uncertainty $\widehat \mu<\ep<\mu$ on both vector fields, it can happen that the induced dynamical systems cease to be topologically equivalent (see Figure~\ref{Meyer_example}). In particular the dynamical system induced by $\dot x=g(x)-\ep$ does not have any bounded solution anymore.\par\smallskip

\begin{figure}[h]
	\centering
	\begin{overpic}[width=\textwidth,trim={0cm 14cm 0 0cm},clip]{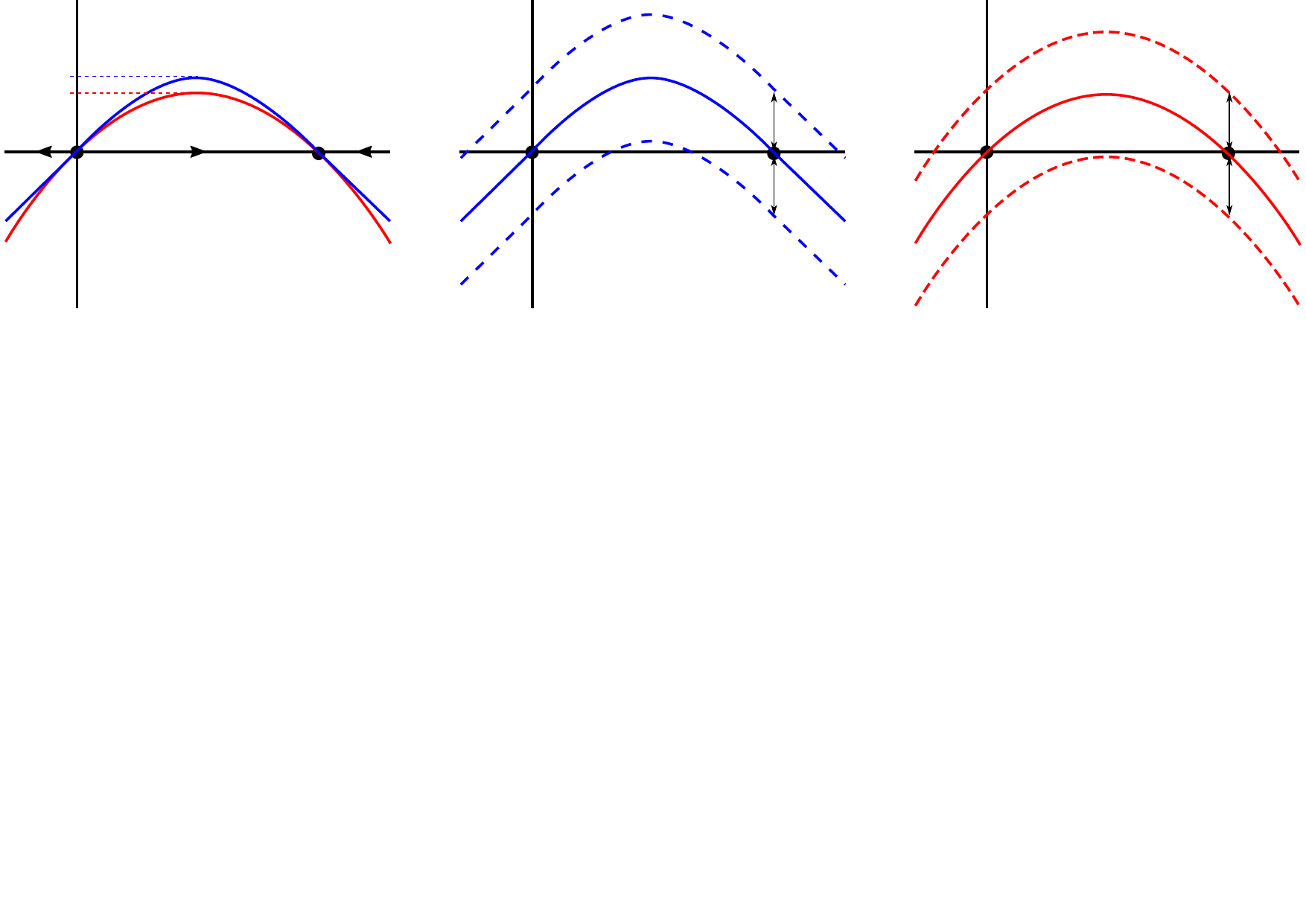}
	\put(0,22){(a)}
	\put(35,22){(b)}
	\put(70,22){(c)}
	\put(3,18){{\color{blue}$\mu$}}
	\put(3,15){{\color{red}$\widehat\mu$}}
	\put(12,19){{\color{blue}$f(x)$}}
	\put(47,19){{\color{blue}$f(x)$}}
	\put(12,13.5){{\color{red}$g(x)$}}
	\put(82,18){{\color{red}$g(x)$}}
	\put(27,13.5){$x$}
	\put(23.5,8.5){$1$}
	\put(59.7,8.5){$\ep$}
	\put(59.7,13){$\ep$}
	\put(94.5,8.5){$\ep$}
	\put(94.5,13){$\ep$}
	\end{overpic}
	\caption{(a) Two continuous vector fields $f$ and $g$ induce ordinary differential equations sharing the same distance to threshold and characteristic return time for the attractor $\A=1$. However, a common uncertainty $\ep>0$ on both problems may lead to dynamical systems which are not topologically equivalent, since  the first maintains always a nonempty set of bounded solutions (b), whereas it may occur that the latter has no bounded solutions (c). The example is thoroughly described by  Meyer and McGehee~\cite{meyer2020intensity}. }
	\label{Meyer_example}
\end{figure}

This section contains five indicators. \emph{Return time} in subsection~\ref{global_ret_time_subsec}, \emph{resistance of a gradient system} in subsection~\ref{potential_subsec}, \emph{resilience boundary} in subsection~\ref{resil_boundary_subsec}, \emph{intensity of attraction} in subsection~\ref{intensity_subsec} and \emph{expected escape times} in subsection~\ref{sec:escape_times}.\par\smallskip

Except for the return time, which is defined for a general continuous flow on $\R^N$, all the other indicators are defined for dynamical systems induced by ordinary differential equations of the type $\dot x =f(x)$, $x\in\R^N$, for which we assume that the assumptions in Section~\ref{sec:prelimin} are satisfied.

\subsection{Return Time}\label{global_ret_time_subsec}
Between the '70s and the '90s of the last century, the notion of return time  as an indicator of resilience (firstly introduced by O'Neill~\cite{oneill1976}) gathered attention and stimulated a fair amount of research (e.g.~Cottingham and Carpenter~\cite{cottingham1994}, DeAngelis et al.~\cite{deangelis1989,deangelis1989b}, Harte~\cite{harte1979ecosystem}). The reason lies in the fact that this definition was able to capture the transient behaviour of the system after an initial perturbation in contrast with the more classical characteristic return time (see Definition~\ref{local_char_time}). Although there are several variations in the literature (mostly differing only for the constant of normalization), they all are similar to the definition below~\cite{neubert1997,oneill1976}.

\begin{defn}[Return time]
	\label{return_time_general}
	Consider a continuous dynamical system $\phi$ on $\R^N$, and assume that  $\A$ is an attractor for $\phi$ and $\BA$ its basin of attraction. The \textit{return time} for the system from $x_p \in \BA\setminus \A$ to $\A$ is defined as:
	\begin{equation}\label{global_ret_time}
	\overline T_R(\A, x_p) = \frac{1}{\dist(x_p, \A)}\int_{0}^{\infty} \dist(\phi(t,x_p),\A)\, dt,
	\end{equation}
	where $\dist(A,B)$ denotes the Hausdorff semi-distance between the sets $A,B\subset\R^N$.  We shall use the symbol $\overline T_R$ instead of the more classical $T_R$ in order to avoid confusion with the characteristic return time presented in Subsection~\ref{local_char_time_susbec}. Moreover, for a fixed measurable set $C\subseteq \BA\setminus \A$, the maximal return time and average return time from $C$ to $\A$ are respectively defined as 
	\[
	T_R^{sup}(\A,C)=\sup_{x_p \in C} \overline T_R(\A, x_p),\quad\text{and}\quad T_R^{mean}(\A,C)=\frac{1}{\mu(C)}\int_{C} \overline T_R(\A,x)\ d\mu(x).
	\]
\end{defn}

The definition of return time is a valuable theoretical tool, but the indefinite integral in~\eqref{global_ret_time} is a clear obstacle in practical applications. Therefore, more calculable indicators, still accounting for the transient dynamics (see Neubert and Caswell~\cite{neubert1997}), have been developed and were frequently preferred in applications.\par\smallskip 

The following result shows (as expected) that the integral in~\eqref{global_ret_time} is finite when the attractor consists of a hyperbolic solution (see Definition~\ref{def:hyperb_sol}).

\begin{prop}\label{prop:finite-return-time}
	Consider a continuous dynamical system on $\R^N$ induced by an ordinary differential equation $\dot x =f(x)$, with $f$  continuously differentiable. Furthermore, assume that the attractor $\A$ is made of the points of a locally attractive hyperbolic solution $\widetilde x$ (see {\rm Definition~\ref{def:hyperb_sol}}). Then,
	\[
	\overline T_R(\A, x_p)<+\infty,\qquad\text{for all }x_p \in \BA.
	\] 	
\end{prop}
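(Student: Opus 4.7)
The plan is to split the improper integral at a finite time $T$ chosen so that $\phi(T,x_p)$ lies in a neighborhood of $\A$ where the hyperbolicity of $\widetilde x$ forces exponential attraction. The part up to $T$ is handled by continuity/compactness, and the tail by an exponential bound.

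First I would establish the following local exponential attraction property of $\A$: there exist constants $\eta>0$, $K\ge 1$, $\alpha>0$ such that
\[
\dist\big(\phi(t,x_0),\A\big)\le K e^{-\alpha t}\,\dist(x_0,\A)\qquad\text{for all }x_0\in B_\eta(\A),\ t\ge 0.
\]
This is the nonlinear counterpart of the exponential dichotomy of the variational equation $\dot y = \textnormal{D}f(\widetilde x(t))y$, which by Definition~\ref{def:hyperb_sol} (with projector $P=\textnormal{Id}$) satisfies $|Y(t)Y^{-1}(s)|\le K e^{-\alpha(t-s)}$ for $t\ge s$. Writing any nearby trajectory as $\phi(t,x_0)=\widetilde x(t)+y(t)$ turns the problem into $\dot y = \textnormal{D}f(\widetilde x(t))y + r(t,y)$ with $r(t,y)=o(|y|)$ uniformly in $t$, by continuous differentiability of $f$ and boundedness of $\widetilde x$ on bounded forward time (actually one uses uniformity in $t$ derived from the orbit structure). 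The variation-of-constants formula together with the dichotomy bound and a Gronwall argument then yields $|y(t)|\le K e^{-\alpha t}|y(0)|$ after possibly shrinking $\eta$ so that the Lipschitz constant of $r$ is absorbed into $\alpha$. Passing from $|y(t)|$ to $\dist(\phi(t,x_0),\A)$ uses $\dist(\phi(t,x_0),\A)\le |\phi(t,x_0)-\widetilde x(t)|=|y(t)|$, while for the initial datum one can, after shrinking $\eta$, parametrize a sufficiently close point as a shift along the orbit plus a transverse part that controls $\dist(x_0,\A)$ up to a constant.

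With this in hand, since $x_p\in\BA$ gives $\omega(x_p)\subset\A$, we have $\dist(\phi(t,x_p),\A)\to 0$, so there exists $T>0$ with $\phi(T,x_p)\in B_\eta(\A)$. Splitting the integral,
\[
\int_0^\infty \dist\big(\phi(t,x_p),\A\big)\,dt=\int_0^T \dist\big(\phi(t,x_p),\A\big)\,dt+\int_T^\infty \dist\big(\phi(t,x_p),\A\big)\,dt.
\]
The first summand is finite because $t\mapsto\dist(\phi(t,x_p),\A)$ is continuous on the compact interval $[0,T]$ (the flow is continuous and $\dist(\cdot,\A)$ is $1$-Lipschitz). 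For the second summand, using the semigroup property and the exponential attraction estimate applied to the initial point $\phi(T,x_p)\in B_\eta(\A)$,
\[
\int_T^\infty \dist\big(\phi(t,x_p),\A\big)\,dt=\int_0^\infty \dist\big(\phi(s,\phi(T,x_p)),\A\big)\,ds\le \frac{K}{\alpha}\,\dist\big(\phi(T,x_p),\A\big)<\infty.
\]
Dividing by $\dist(x_p,\A)>0$ (which is nonzero since $x_p\in\BA\setminus\A$) yields $T_R(\A,x_p)<+\infty$.

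The main obstacle is the exponential attraction estimate in the first step. For $\A$ equal to a hyperbolic equilibrium it is immediate, for a hyperbolic periodic orbit it is the classical asymptotic phase/Floquet result, but for a general locally attractive hyperbolic solution in the sense of Definition~\ref{def:hyperb_sol} it requires combining the dichotomy bound with a careful Gronwall estimate on the nonlinear remainder, uniform in the base point along $\widetilde x$. Everything else is a routine splitting/integration argument.
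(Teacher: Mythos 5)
Your proof is correct and follows essentially the same route as the paper: establish a local exponential attraction estimate near $\A$, pick $T$ so that $\phi(T,x_p)$ enters that neighborhood, and split $\int_0^\infty$ into a finite piece handled by continuity on $[0,T]$ and a tail controlled by the exponential bound. The only cosmetic difference is that the paper simply invokes the First Approximation Theorem (Hale, Theorem III.2.4) to get the local estimate, whereas you sketch its derivation from the dichotomy of the variational equation via variation of constants and Gronwall.
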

\begin{proof}
    Due to the First Approximation Theorem (see~\cite[Theorem~III.2.4]{hale1980ordinary}), there is $\delta>0$ such that if $\overline x\in\BA$ satisfies $|\overline x-\widetilde x(s)|\le \delta$ for some $s\in\R$, then for all $t\ge s$, $|\phi(t,\overline x)-\widetilde x(t)|<Ke^{-\beta(t-s)}$ for some $K>1$ and $\beta>0$. On the other hand, by definition, for every $x_p\in\BA\setminus \A$ there is $s>0$ such that $\dist(\phi(s,x_p),\A)<\delta$. Then the result is a consequence of the linearity of the integral in~\eqref{return_time_general}.
\end{proof}

\subsubsection*{Invariance with respect to change of coordinates} The maximal and average return times are invariant with respect to diffeomorphisms of $\R^N$ which preserve the Euclidean distance. This includes in particular linear changes of coordinates  $z=Qy$, with $Q$ nonsingular and orthogonal. Note that the region of interest needs to be reparametrized in a consistent way.

\subsection{Resistance of a gradient system}\label{potential_subsec} 
Stability landscapes are a commonly used tool in mathematical ecology as they provide immediate intuitive information on the dynamical features of equilibria (when they exist) of low-dimensional systems. Limitations and warnings of cautious employment of this instrument have been pointed out by many (see~\cite{meyer2016} for example). In mathematical terms a stability landscape can be seen as the potential $V$ of a gradient system (and related to the notion of Lyapunov function~\cite{conley1988gradient}). This means that the phenomenon can be represented via an ordinary differential equation of the type $\dot x= - \nabla V(x)$, where $V$ is a twice continuously differentiable real valued function.
Then, one can consider the  minimal work required to bring the system from an attractor $\A$ to a perturbed initial condition outside the basin of attraction $\BA$ as an indicator of resilience (see for example Lundstr\"{o}m~\cite{lundstrom2018}).

\begin{defn}[Resistance  of a gradient system]\label{potential_def}
Consider a continuous dynamical system induced by an ordinary differential equation  $\dot x= - \nabla V(x)$, where ${V:\R^N\to\R}$ is a twice continuously differentiable. The \emph{resistance (potential) of a gradient system} with respect to the attractor $\A$ is 
\[
W(\A)=\inf_{x_0\notin\BA,\,a\in\A}\big(V(x_0)-V(a)\big).
\]
\end{defn} 

\begin{rmk}
	Walker et al.~\cite{walker2004} call \emph{resistance of an attractor} $\A$ the quantity defined by 
	\[
	R_W(\A)=\frac{W(\A)}{L_w(\A)},
	\]
	where $L_w(\A)$ is the latitude in width of $\A$. As pointed out also by Meyer~\cite{meyer2016}, care should be taken in the interpretation of this indicator. In particular, if the timescales of disturbance and internal dynamics are considerably different, the slope of the potential function at the state of the system does not necessarily relate to the capability of the system to recover.
\end{rmk}
 \subsubsection*{Invariance with respect to change of coordinates} 
If the potential function $V$ is conserved, a diffeomorphism $\sigma:R^N\to\mathbb{R}^N$ $y\mapsto \sigma(y)$ of the phase-space does not change the indicator since $|V(x_0)-V(x)|= |V(\sigma(y_0)-V(\sigma(y)|$ if $x_0=\sigma(y_0)$ and $x=\sigma(y)$.

\subsection{Resilience Boundary}\label{resil_boundary_subsec}
Meyer et al.~\cite{meyer2018} suggest to measure the resilience of an attractor by introducing and applying the so-called ``flow-kick framework", a sequence of repeated ``impulsive" disturbances.  Given a continuous dynamical system induced by an ordinary differential equation $\dot x =f(x)$, $x\in\R^N$, assume that solutions are defined for all $t\in\R^+$. For fixed $\tau>0$, $\kappa\in\R^N$, and $a\in\A$ a \emph{flow-kick trajectory} $\widetilde x_{\tau,\kappa}(\cdot, a)$ starting in $a\in\R^N$  is constructed by considering a partition of the positive real line in intervals $[j\tau,(j+1)\tau)$, $j\in\N$, and a sequence of functions $\widetilde x_j:[j\tau,(j+1)\tau]\to\R^N$ such that $\widetilde x_j$ solves the Cauchy problem 
\[
\dot x=f(x),\qquad \widetilde x_j(j\tau)=\widetilde x_{j-1}(j\tau)+\kappa,\quad\text{ and }\quad\widetilde x_0(0)=a\in\A.
\]
One may interpret $\kappa$ as a \emph{kick}, while $\tau$ represents the \emph{flow time}, and a pair $(\tau,\kappa)\in(0,\infty)\times\R^N$ is called a \emph{disturbance pattern}. Notice that except for the trivial case where $\kappa$ is the origin in $\R^N$, a flow-kick trajectory is not a solution of the original problem.

\begin{defn}[Flow-kick resilience set and resilience boundary]\label{def_flowkick_boundary}
Under the previously stated assumptions and notation, a disturbance pattern $(\tau,\kappa)\in(0,\infty)\times\R^N$ is said to be in the \emph{flow-kick resilience set} if 
\[
\inf_{t>0}P_\A\big(\widetilde x_{\tau,\kappa}(t, a)\big)>0\quad\text{for all }a\in\A,
\]
where $P_\A(y)$ is the precariousness of $y\in\R^N$ (see {\rm Definition~\ref{def:precariousness}}). Moreover, the boundary of the flow-kick resilience set in $(0,\infty)\times\R^N$ is called the \emph{(flow-kick) resilience boundary} for the basin of attraction $\BA$.
\end{defn}
If $N>1$ the resilience boundary should be considered as a precautionary threshold. Indeed, it allows to identify the flow-kick trajectories which leave $\BA$ at some point, but does not allow to distinguish them from those who eventually return to $\BA$~\cite{meyer2018}.
On the other hand, if $N=1$ and $\widetilde x_{\tau,\kappa}(\overline t, a)\notin\BA$ for some $a\in\A$ and disturbance pattern $(\tau,\kappa)$, the same is true for all $t\ge\overline t$. 
In fact, under appropriate assumptions, the scalar case allows to associate a quantitative indicator based on the area of the region in between the flow-kick resilience set and the line $|\kappa|=DT(\A)$; see also Figure~\ref{fig:resilience_boundary}, where $DT(\A)$ denotes the distance to threshold for $\A$. This indicator has the merit of establishing a relation with distance to threshold (Definition~\ref{def_DT}) and distance to bifurcation (Definition~\ref{dist_to_bif_def}).

\begin{figure}[h]
\centering
\includegraphics[width=0.8\textwidth,trim={1cm 1cm 0cm 0cm},clip]{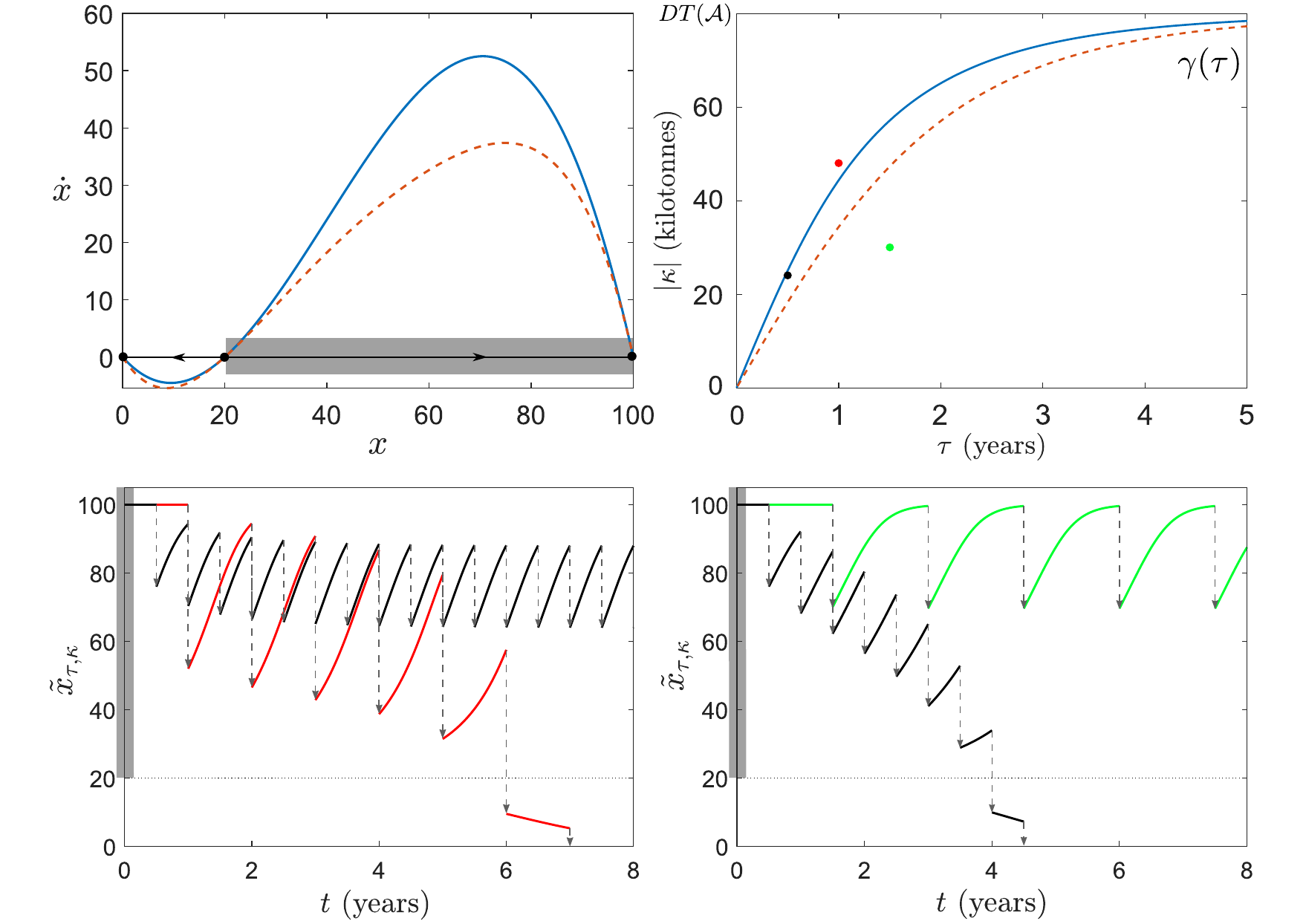}
	\caption{Example of flow-kick resilience and resilience boundary for scalar models in population dynamics. The upper left-hand side panel portrays  two populations. Population 1 modelled by  $\dot{x}=x\left(1-\frac{x}{100kt} \right)\left(\frac{x}{20kt}-1 \right),$ (solid blue line) where $kt$ stands for kilo-tonnes, and population 2 by $\dot{x}=x\left(1-\frac{x}{100kt} \right)\left(\frac{x}{20 kt}-1 \right)(0.0002x^2-0.024x+1.4)$ (dashed red line). Both systems have an attracting fixed point at $\A=\{100kt\}$ whose  basin of attraction is marked in grey. The two systems have same distance to threshold  and characteristic return time. Their resilience boundaries are shown in the upper right-hand side panel where also three kick flow patterns are highlighted. In the lower panels, two pairs of flow-kick trajectories are shown for population 1 on the left-hand side and population 2 on the right-hand side. In black the flow-kick trajectory relative to the disturbance pattern $(6\text{ months},-24kt)$ , in red the one relative to $(12\text{ months},-48kt)$ and in green the one relative to $(18\text{ months},-30kt)$. The disturbance pattern $(6\text{ months},-24kt)$ lies in the resilience set for population 1, but outside the resilience set of population 2. We refer the reader to~\cite{meyer2018} for further details.}
	\label{fig:resilience_boundary}
\end{figure}

 \subsubsection*{Invariance with respect to change of coordinates} Diffeomorphisms of $\R^N$ which preserve the Euclidean distance do not affect this indicator, provided that also the vector $\kappa\in\R^N$ of the disturbance pattern is reparametrized accordingly. This is due to the fact that such changes of variable  do not affect the indicator of precariousness (Definition~\ref{def:precariousness}). In particular linear changes of coordinates  $z=Qy$, with $Q$ nonsingular and orthogonal conserve the value of this indicator.

\subsection{Intensity of Attraction}
\label{intensity_subsec}
The intensity of attraction aims to measure the strength of the attraction of an attractor. It was first developed by McGehee~\cite{mcgehee1988} in the context of continuous maps acting on a compact metric space, and it has been recently extended by Meyer~\cite{meyer2019} and Meyer and McGehee~\cite{meyer2020intensity} to the continuous-time dynamics induced from an ordinary differential equation on $\R^N$. For the sake of consistency with the set-up of this work we shall present the latter.

Consider a continuous dynamical system induced by an ordinary differential equation $\dot x =f(x)$, with $f:U\subset\R^N\to\R^N$, $U$ open, and $f$ bounded and globally Lipschitz continuous on $U$. 
Moreover, consider an interval $I\subset \R$ containing zero and an essentially bounded function $g\in L^\infty([0,T],\R^N)$, that is,  $g:[0,T] \to \R^n$ such that $\inf\{c\geq 0: \mu(\{t \in [0,T]:|g(t)|>c\})=0\}<\infty$ 
and the perturbed system
\begin{equation}\label{meyer_PP}
\dot{x}=f(x)+g(t).
\end{equation}
Equation~\eqref{meyer_PP} is well-defined (it is in fact a special type of so-called Carathéodory differential equations) and it satisfies the conditions of existence and uniqueness for the associated Cauchy problems~\cite{coddington1955}. In fact, it is also assumed that for any $x_0\in\R^N$ the solution $x_g(\cdot,x_0)$ of~\eqref{meyer_PP} with $x(0)=x_0$ can be continuously extended to $[0,T]$. Notice also that the function
\[
\phi:[0,T]\times\R^N\times L^\infty\to \R^N,\quad (t,x_0,g)\mapsto \phi(t,x_0,g)=x_g(t,x_0)
\]
vary continuously in $[0,T]\times\R^N\times L^\infty$~\cite{meyer2020intensity}. As a matter of fact, under the given assumptions, the problem also induces a continuous skew-product flow~\cite{longo2018topologies,longo2017topologies,longo2019weak}. The fundamental idea behind the concept of intensity is to conceive each bounded set of perturbing functions $g\in L^\infty$ as a class of controls, and to investigate which parts of the phase space can be reached through them. In this way one is able to relate the transient dynamics of a system to the minimal class of controls, which are able to drive the points of an attractor away from it for all future time. In order to state the mathematical definition, let us recall some notation. Given $S \subset \R^n$,  we  denote by $ \phi(t,S,g)$ the set $\bigcup_{x_0 \in S}\phi(t,x_0,g)$ and call the \textit{reachable set} from  $S$ via controls of norm $r>0$ within the interval of time $I$, the set
	\begin{equation}
	P_r(S,[0,T])=\bigcup_{t \in [0,T]} \bigcup_{g \in F_r([0,T])} \phi(t,S,g),
	\end{equation}
where $F_r([0,T]) =\{g\in L^\infty ([0,T],\R^n)\colon \norm{g}_\infty<r\}$.

\begin{defn}[Intensity of attraction]
\label{intensity_def}
	The \textit{intensity of attraction} $\mathcal{I}(\A,[0,T])$ of an attractor $\A$ is defined as: 
	\begin{equation}
	\mathcal{I}(\A,[0,T]) = \sup\{r \geq 0 \mid P_r(\A,[0,T])\subset K \subset \BA \text{ for some compact }  K\subset\R^N \}.
	\end{equation}
\end{defn}
Interestingly, the intensity of attraction provides information on the persistence of an attractor not only against a time-dependent perturbation. Meyer et al.~prove that for any vector field, which differs in norm from $f$ less than its intensity of attraction, a compact positively invariant subset of $\R^N$ exists which contains the attractors of both systems (see~\cite[Theorem 6.12]{meyer2020intensity}). 
\subsubsection*{Invariance with respect to change of coordinates} Linear changes of coordinates  $z=Qy$, with $Q$ nonsingular and orthogonal conserve the value of this indicator.

\subsection{Expected escape times}\label{sec:escape_times} 
It is a well-known fact that the presence of a stochastic perturbation can have a non-negligible impact on the dynamics of a system and possibly trigger a critical transition~\cite{kuehn2011mathematical}. 
Dennis et al.~\cite{dennis2016allee} who, in turn, refer to Gardiner~\cite{gardiner1985handbook} for the main idea, suggest to judge the resilience of an attractor for a system subjected to white noise via the notion of mean first passage time, also known as expected escape time. 

\begin{defn}[Expected escape times]\label{defn:exp_escape_times}
Consider a scalar stochastic differential equation 
\begin{equation}\label{eq:scalar_stochastic}
 dX =f(X)\,dt+\sqrt{\nu(X)}\,dW(t),   
\end{equation}
where $\nu:\R\to\R^+$ is continuous, and $ W$ is a standard Brownian motion. If~\eqref{eq:scalar_stochastic} has a stationary distribution with probability density $p(x)$, then the area under $p(x)$ between $x_0$ and $x_1$ gives the long-term proportion of time that the process $X$ will spend in the interval $(x_0,x_1)$. The \emph{expected escape times} correspond to the mean first passage times for a process starting at $x_0$ to increase (resp. decrease) to $x$,
\[
\begin{split}
\tau_1(x)&=2\int_{x_0}^x\frac{\int_0^yp(z)\,dz}{\nu(x)p(y)}\,dy\quad\text{for }x_0<x, \\ 
\tau_2(x)&=2\int^{x_0}_x\frac{\int_y^\infty p(z)\,dz}{\nu(x)p(y)}\,dy\quad\text{for }x_0>x.
\end{split}
\]
\end{defn}
The expected escape times represent the mean amount of time necessary for the process $X$ to attain the value $x$ starting from $x_0<x$ and $x_0>x$, respectively. As explained in~\cite{dennis2016allee}, one or both these integrals can be infinite. For example if $x=0$ acts like an absorbing value and the system lies close to it, there is a positive probability that the first escape time takes an infinite value and so $p(x)$ is not integrable near $0$. An analogous reasoning holds for the second integral if $\infty$ acts as an absorbing boundary. In this case the right tail of $p$ is not integrable. 

\subsubsection*{Invariance with respect to change of coordinates} The expected escape times are presented for scalar stochastic equations. A linear change of coordinates in this context would mean a scaling of the only available dependent variable. For example consider $Y=aX$, with $a>0$. Then one obtains the stochastic differential equation $dY =af(Y/a)\,dt+\sqrt{a^2\nu(Y/a)}\,dW(t)$. Note that if $p(z)$ is the stationary distribution for $X$, then $\widetilde p(z)=p(z/a)/a$ is the stationary distribution for $Y$. Then, the expected escape times for the process $Y$ starting at $y_0$ are 
\[
\begin{split}
\widetilde \tau_1(y)&=2\int_{y_0}^y\frac{\int_0^\mu \widetilde p(z)\,dz}{a^2\nu(y/a)\widetilde p(\mu)}\,d\mu\quad\text{for }y_0<y, \\ 
\widetilde\tau_2(y)&=2\int^{y_0}_y\frac{\int_\mu^\infty \widetilde p(z)\,dz}{a^2\nu(y/a)\widetilde p(\mu)}\,d\mu\quad\text{for }y_0>y.
\end{split}
\]
For example, let us consider the first integral; then we have that for $y_0<y$ and using $\widetilde p(z)=p(z/a)/a$, and suitable changes of variables,
\[
\begin{split}
\widetilde \tau_1(y)&=2\int_{y_0}^y\frac{\frac{1}{a}\int_0^\mu  p(z/a)\,dz}{a\nu(y/a) p(\mu/a)}\,d\mu
=  2\int_{y_0/a}^{y/a}\frac{\int_0^{a\eta}  p(z/a)\,dz}{a\nu(y/a) p(\eta)}\,d\eta=2\int_{y_0/a}^{y/a}\frac{\int_0^{\eta}  p(\zeta)\,d\zeta}{\nu(y/a) p(\eta)}\,d\eta\\
\end{split}
\]
which is equal to $\tau_1(y/a)$ for the process $X$ starting at $y_0/a$. An analogous result holds for $\tau_2$. General diffeomorphic reparametrizations of $\R$ do not in general preserve the value of the indicators.

\subsection{Discussion} 

\begin{itemize}[leftmargin=*]
\item Although the indicators based on the transient dynamics add an additional layer to the analysis of resilience, their mutual comparison as well as their comparison with other classes of resilience indicators is a task that remains largely unaccomplished. One of the few quantitative exceptions is given by the comparison between the characteristic return time (Defintion~\ref{local_char_time}) and the return time (Definition~\ref{return_time_general}) showcased by Cottingham and Carpenter~\cite{cottingham1994} for a model of summer phosphorus cycling in north temperate lakes. According to their simulations, the characteristic return time identifies a   planktivore-dominated food web as more resilient than a piscivore-dominated one, whereas the opposite result is obtained through the return time. This single example shows that no qualitative agreement between these indicators can be generally assumed. As a matter of fact, the return time seems to be considered as a more reliable indicator since the estimation given by the reciprocal of the characteristic return time completely neglects the transient deviation after the perturbation which can be considerably large~\cite{deangelis1989}.

\item In 1979, Harrison~\cite{harrison1979} introduced the concept of \emph{stress period} for a differential equation, that is, a compact interval of time in which the problem is perturbed and one can measure the ability of the system to resist displacement, and the rate of recovery to the original condition once the perturbation is over. In the original formulation (presented in subsection~\ref{harrison_res_elas_subsec}) the perturbation consists only of a time dependent piece-wise-constant change of a parameter. However, it can be easily generalized as exposed in Remark~\ref{rmk:harrison_generalized} and included among the indicators of resilience focusing on transient dynamics.

\item  Stochastic tools have come to play an increasing role in the analysis of persistence of certain dynamical features. In this review, we limited the presentation to the expected escape times in Definition~\ref{defn:exp_escape_times} which explicitly appear in the resilience literature. However, it seems clear to us that other analytical tools from stochastic analysis will eventually appear in the context of resilience. For example, the identification of the most likely escape trajectory has been treated by Forgoston \& O.~Moore~\cite{forgoston2018primer} by   identifying a heteroclinic connection for the associated twice-the-dimension Euler-Lagrange problem, and it might be regarded as a stochastic analogous of the resistance of gradient systems. More recently, Slyman and Jones~\cite{slyman2022rate} have applied a similar technique in the context of rate-induced tipping, a critical transition that we shall treat in Subection~\ref{subsec:R-tipping}. Similarly, the role of propagation of uncertainties in the occurrence of a bifurcation can be regarded as a transient indicator which extend the distance to bifurcation indicator in Definition~\ref{dist_to_bif_def} (see for example Kuehn and Lux~\cite{kuehn2021uncertainty} or Lux et al.~\cite{lux2021uniting}).

\end{itemize}

\section{Variation of parameters}
\label{sec:parameters}

In addition to the previous notions of resilience, one can also view parameters as static phase space variables, which are changed infinitely slowly. Due to their special role, one can then ask, how one may define resilience in the parameter space. Therefore, the term resilience acquires a wider significance as it does not limit to the capability of a system to endure a perturbation and revert to a desirable state but to the overall persistence of the dynamical relations which determine the qualitative behaviour of all the trajectories. Let us set some common notation for this section. We shall consider a family of continuous dynamical systems induced by a parametric family of ordinary differential equations
\begin{equation}\label{eq:parametric-ode}
\dot x =f(x,\lambda),\quad x\in\R^N,\,\lambda\in\Lambda\subset\R^M,
\end{equation}
with $f\in C(\R^N\times\R^M,\R^N)$ satisfying the assumptions in {\rm Section~\ref{sec:prelimin}} for all $\lambda\in\Lambda$. When we fix a parameter $\lambda_0\in\R^M$, we will refer to the respective equation by the symbol~\eqref{eq:parametric-ode}$_{\lambda_0}$ and  denote by $\phi_{\lambda_0}$  the local flow on $\R^N$ induced by~\eqref{eq:parametric-ode}$_{\lambda_0}$.
\par\smallskip

This section contains three subsections. In subsection~\ref{dist_to_bif_subsec}, we present the indicator \emph{distance to bifurcation}. In subsection~\ref{harrison_res_elas_subsec}, we present three indicators: \emph{resistance}, \emph{elasticity} and \emph{persistence}. Lastly, the relation between resilience and tipping points in nonautonomous systems is explored in subsection~\ref{subsec:R-tipping}. It should be noted that in subsections~\ref{harrison_res_elas_subsec} and~\ref{subsec:R-tipping}, the considered variation of parameters is ``dynamic" meaning that in place of $\lambda\in\Lambda$ in~\eqref{eq:parametric-ode}, we shall use a function $\gamma:\R\to\Lambda$, $t\mapsto\gamma(t)$.

\subsection{Distance to Bifurcation}\label{dist_to_bif_subsec}

Bifurcation theory is a classical branch of nonlinear dynamical systems which studies the lack of topological equivalence in phase space upon (smooth) change of one or more of the system's parameters. The classic theory for autonomous dynamical systems encompasses the bifurcations of local steady states, periodic orbits,  as well as homoclinic and heteroclinic orbits~\cite{kuznetsov2013elements}. For its characteristics, a bifurcation point entails a profound change in the dynamics of the system, which may include the change of stability or the disappearance of a considered attractor. In practical terms, a bifurcation point can sometimes be undesirable, especially when the system exhibits hysteresis or irreversibility~\cite{carpenter1999}.  It is therefore natural that the distance to a bifurcation point has been suggested as a measure of resilience~\cite{dobson1993,ives2007}.

\begin{defn}[Distance to bifurcation]\label{dist_to_bif_def}
Let $\A$ be an attractor for the dynamical system induced by~\eqref{eq:parametric-ode}$_{\lambda_0}$, and  denote by $\Lambda_{bif}(\A)\subset\R^M$ the set of bifurcation points for $\A$. The \textit{distance to bifurcation}  for $\A$ is defined as
\[
D_{\rm{bif}}(\A)=\inf_{\lambda^* \in \Lambda_{bif}} |\lambda_0-\lambda^*|. 
\]
\end{defn}

Notably, distance to bifurcation can be interpreted as a distance to threshold (see Definition~\ref{def_DT}) in parameter space. 

 \subsubsection*{Invariance with respect to change of coordinates} No general class of diffeomorphisms of the phase space leaves this indicator unaltered.

\subsection{Resistance, elasticity and persistence}\label{harrison_res_elas_subsec}

Harrison~\cite{harrison1979} examines the transient behaviour of a system which undergoes an instantaneous change of parameters and returns to the original values,  just as instantly, after an interval of time. The system's response during and after the perturbation are suggested as indicators of the system's capacity to withstand a change and to recover. Harrison names these features respectively \textit{resistance} and resilience (although, to avoid confusion, we will use Orians's nomenclature~\cite{orians1975} and call Harrison's resilience \textit{elasticity}). Using Webster's words~\cite{webster1975}, the first describes ``the ability to resist displacement", whereas the latter ``the rate of recovery to the original condition". With the notation set at the beginning of the section we have the following definitions:

\begin{defn}[Resistance]\label{hison_res}
	 Given an attractor $\A$ for~\eqref{eq:parametric-ode}$_{\lambda_0}$, and fixed $T>0$ such that for all $\lambda\in\Lambda$, and $a\in\A$, $\phi_\lambda(\cdot,a)$ is defined at least in $[0,T]$, the \emph{resistance} of  system~\eqref{eq:parametric-ode}$_{\lambda_0}$ at  $\A$ with respect to $\Lambda$ and the \emph{stress period} $[0,T]$ is defined as:
	\[R(\A,\Lambda, T)=\sup_{\substack{t\in [0,T],\, a\in\A_{\lambda_0}\\ \lambda\in\Lambda}}|\phi_{\lambda}(t,a)-\phi_{\lambda_0}(t,a)|.\]
\end{defn}

\begin{defn}[Elasticity]\label{def:elasticity}
	  Given an attractor $\A$ for~\eqref{eq:parametric-ode}$_{\lambda_0}$, and fixed $T>0$ such that $\phi_{\lambda}(T,\A)\subset\mathcal{B}(\A)$ for all $\lambda\in\Lambda$, consider the continuous function $\Phi_{\lambda_0}:[T,\infty)\times\A\times\Lambda
	  \to\R^N$ defined by
	  \[
	  \Phi_{\lambda_0}(t,a, \lambda):=|\phi_{\lambda_0}\big(t-T,\phi_{\lambda}(T,a)\big)-\phi_{\lambda_0}(t,a)|.
	  \]
	 The \textit{elasticity} of the system~\eqref{eq:parametric-ode}$_{\lambda_0}$ at  $\A$ with respect to $\Lambda$ and the \emph{stress period} $[0,T]$ is defined as:
	\[E(\A,\Lambda, T)=\sup_{\substack{t\in [T,\infty),\, a\in\A_{\lambda_0}\\ \lambda\in\Lambda}}\left(\frac{1}{\Phi_{\lambda_0}(t,a, \lambda)}\frac{d\, \Phi_{\lambda_0}(t,a, \lambda)}{dt}\right).
	\]
\end{defn}

\begin{figure}
		\centering
		\includegraphics[trim={0.5cm 9.5cm 2cm 10cm},clip,scale=0.44]{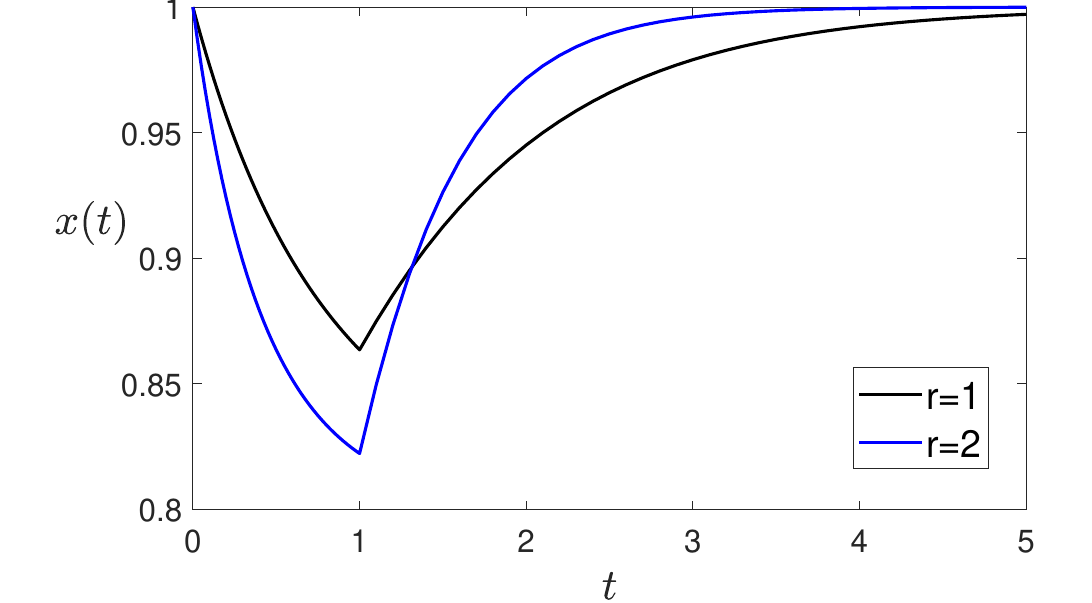}
		\caption{Effect of a stress period of Harrison's type on two populations following the logistic model $\dot{x}=rx\left(1-x/K\right)$  with carrying capacity $K=1$, and growth rates respectively equal to $r_1=1$ and  to $r_2=2$. A stress period $[0,1]$ is applied to both models during which the carrying capacity is diminished of the $20\%$ (i.e.~$\Lambda=\{0.8\})$. The resistance of the attractor $x^*=1$ is lower for the first species than the second. Conversely, the elasticity of the attractor $x^*=1$ is higher for the first species than the second.}
		\label{log_resil_resist}
	\end{figure}

In Figure~\ref{log_resil_resist} it is possible to appreciate the complementary information provided by resistance and elasticity in a simple scalar differential model from population dynamics.

\begin{rmk}
The definitions of resistance and elasticity presented above admit a natural generalization. Instead of using the original trajectory inside the attractor as a reference, in practice it may be sufficient to record the displacement from the attractor or the rate of convergence to the attractor. This would lead to the following weaker definitions
\[
R^w(\A,\Lambda, T)=\sup_{\substack{t\in [0,T],\, a,a'\in\A_{\lambda_0}\\ \lambda\in\Lambda}}|\phi_{\lambda}(t,a)-a'|,
\]
and 
\[
	  \Phi^w_{\lambda_0}(t,a, \lambda):=\inf_{a'\in\A}|\phi_{\lambda_0}\big(t-T,\phi_{\lambda}(T,a)\big)-a'|,
	  \]
	\[E^w(\A,\Lambda, T)=\sup_{\substack{t\in [T,\infty),\, a\in\A_{\lambda_0}\\ \lambda\in\Lambda}}\left(\frac{1}{\Phi^w_{\lambda_0}(t,a, \lambda)}\frac{d\, \Phi^w_{\lambda_0}(t,a, \lambda)}{dt}\right).
	\]
\end{rmk}

Ratajczak et al.~\cite{ratajczak2017} use a setup similar to Harrison's to address the problem of the interaction between the duration and the ``intensity" of a stress period in determining a tipping of the system from the original basin of attraction to a different one (if it applies). In accordance with Harrison's nomenclature, we shall call this property persistence.

\begin{defn}[Persistence]\label{def:persistence}
The \emph{persistence} of an attractor $\A$ for~\eqref{eq:parametric-ode}$_{\lambda_0}$ with respect to a \emph{stress intensity} $\lambda\in\Lambda$ is given by
\[
P_\lambda(\A)=\sup\{T>0\mid \phi_\lambda(t,\A)\subset\mathcal{B}(\A)\text{ for all }0\le t\le T\}.
\]
The \emph{persistence} of an attractor $\A$ for~\eqref{eq:parametric-ode}$_{\lambda_0}$ with respect to a \emph{stress duration} $T>0$ is given by
\[
P_T(\A)=\sup\{\rho>0\mid  \phi_\lambda(t,\A)\subset\mathcal{B}(\A)\text{ for all }0\le t\le T\text{ and } |\lambda-\lambda_0|\le\rho\}.
\]
\end{defn}

\begin{rmk}\label{rmk:harrison_generalized}
It is clear that Harrison's formulations of resistance and elasticity as well as of persistence generalize to any perturbation on a finite time interval rather than just a piece-wise constant change of parameters. Consider~\eqref{eq:parametric-ode}$_{\lambda_0}$ for some $\lambda_0\in\R^M$, $T>0$, and a set $\Lambda$ of functions from $\R\times\R^N$ into $\R^M$ such that if $\lambda\in\Lambda$, then for every compact set $K\subset\R^N$, $\sup_{x\in K}\lambda(t,x)$ is locally integrable and $\lambda(t,x)=\lambda_0$ for all $x\in\R^N$ and $t\notin[0,T]$. Hence, one can consider the problem 
\[
\dot x =f\big(x,\lambda(t,x)\big), 
\]
and still define resistance, elasticity and persistence as above. 
\end{rmk}

 \subsubsection*{Invariance with respect to change of coordinates} Both, resistance and elasticity are invariant with respect to diffeomorphisms of the phase space which preserve the Euclidean distance. This includes in particular linear changes of coordinates  $z=Qy$, with $Q$ nonsingular and orthogonal. The persistence with respect to stress intensity is susceptible to changes of time-scale. Note laso that no specific class of diffeomorphisms of $\R^N$ preserves the persistence with respect to the stress duration.

\subsection{Resilience against rate-induced tipping}\label{subsec:R-tipping}
Sizable changes in the output of a system upon a negligible variation of the input are referred to as \emph{critical transitions} or \emph{tipping points}. Motivated by current challenges in nature and society~\cite{gladwell2006tipping,scheffer2020critical}, the study of the several mechanisms leading to a critical transition has experienced significant mathematical interest as well~\cite{kuehn2011mathematical,kuehn2013mathematical}. In recent years for example, the phenomenon of rate-induced tipping~\cite{wieczorek2011excitability} has been proposed as an alternative mechanism for a critical transition, with respect to the more classical autonomous bifurcations and noise-induced tipping~\cite{ashwin2017parameter}. Rate-induced tipping can be seen as a special type of nonautonomous bifurcation for the differential equation 
 \begin{equation}\label{eq:init-probA}
\dot x =f\big(x,\gamma(rt)\big),\qquad x\in\R^N,t\in\R,r>0
\end{equation}
obtained from~\eqref{eq:parametric-ode} through a time-dependent variation of parameters $\gamma:\R\to\R^M$   which is assumed to be bounded, continuous and asymptotically constant: $\lim_{t\to-\infty}\gamma(t)=\lambda_0,\  \lim_{t\to\infty}\gamma(t)=\lambda_\infty$.  In particular, the parameter $r>0$ represents the rate at which the time-dependent variation takes place. We shall maintain a concise presentation and leave the detailed assumptions and technical aspects for 
Appendix~\ref{appendix:rate}.

Typically the study of rate-induced tipping is carried out under the assumption that the time-dependent variation of the parameters does not cross any point of autonomous bifurcation; i.e.~we  assume that for all $\lambda$ in the image of $\gamma$, the dynamical systems induced by~\eqref{eq:parametric-ode}$_\lambda$, are all topologically equivalent.  One might expect that for any fixed attractor $\A_0$ of the \emph{past limit-problem}, and denoted by $\A_f$ the ``corresponding" attractor  of the \emph{future limit-problem}, for all $r>0$ there is an attractor of~\eqref{eq:init-probA}$_r$ which approaches $\A_0$ as $t\to-\infty$ and $\A_f$ as $t\to\infty$. However, this is not always the case and, depending on $r$, a local attractor  of~\eqref{eq:init-probA}$_r$ limiting at $\A_0$ as $t\to-\infty$ may fail to track $\A_f$ as $t\to\infty$. This phenomenon is called \emph{rate-induced tipping}, and the specific value $r^*_{\gamma}(\A_0)$ of the parameter where it occurs is called \emph{critical rate} (see Definition~\ref{def:R-tipping}). It seems therefore natural that the critical rate $r^*_{\gamma}(\A_0)$
determines a threshold of resilience for~\eqref{eq:parametric-ode}$_{\lambda_0}$ against the time-dependent change of parameters $\gamma$, in the sense that for all $r<r_\gamma^{*}$, where $r_\gamma^{*}$ is the inferior over all the attractors of~\eqref{eq:parametric-ode}$_{\lambda_0}$, the tracking of attractors from the past to the future is always verified.

This statement assumes further significance if we look at rate-induced tipping as of a \emph{nonautonomous bifurcation}. Then, the bifurcation point $r_\gamma^{*}$ can be used to construct a distance to bifurcation analogous to the one in Definition~\ref{dist_to_bif_def}.
\begin{defn}[Distance to rate-induced tipping]\label{dist_to_bif_rate}
Let $\A_0$ be an attractor of the past limit-problem, and $\A_f$ the ``corresponding" attractor  of the future limit-problem. The \textit{distance to rate-induced tipping} for \eqref{eq:init-probA} and for $\A_0$ is defined as
\[
D_{\rm{rate}}(\A_0)=|r-r^*_\gamma(\A_0)|. 
\]
\end{defn}
It should be noted, that in contrast to autonomous bifurcations, estimating the value of a critical rate can be hard if not impossible~\cite{kuehn2022estimating}.
There is, furthermore, a substantial difference from Definition~\ref{dist_to_bif_def}. The tipping point $r_\gamma^{*}$ depends on the function $\gamma$ that realizes the variation of parameters. Therefore, it  is  beneficial to investigate the continuous dependence of $r^*_\gamma$ with respect to the variation of $\gamma$. We hereby propose a first result in this direction, when all the attractors of ~\eqref{eq:parametric-ode}$_\lambda$ are hyperbolic equilibria. The result guarantees the lower semi-continuity (and if it applies the continuity) of $r^*_\gamma$ with respect to $\gamma$ in a specific set of functions: fixed $t_0\in\R$, we consider the set $\Omega(\gamma, t_0)$ of twice continuously differentiable functions $\omega:\R\to\Lambda$ such that $\omega(t)=\gamma(t)$ for all $t\le t_0$ and $\lim_{t\to\infty}\omega(t)=\lambda\in\Lambda$. For a detailed explanation of the symbols appearing in the statement and for the proof, please consult Appendix~\ref{appendix:rate}.

\begin{thm}\label{thm:res_rate_induced_tipping}
Assume that $f$ is bounded and uniformly continuous, and that there are $\ep>0$ and $\overline \delta>0$ such that, for every $\overline r>0$, a $T(\gamma,\overline r)>0$ exists for which if $\omega\in\Omega(\gamma, t_0)$ and $\|\gamma-\omega\|_{\mathcal{C}(\R,\Lambda)}<\overline\delta$, and if $\widetilde x_{\omega,\overline r}$ is a solution of $\dot x= f(x, \omega(\overline r t))$ defined on $[T(\gamma,\overline r),\infty)$ and  $y'={\textnormal{D}}f\big(\widetilde x_{\omega,\overline r}(t),\omega(\overline rt)\big)y$ has an exponential dichotomy with projector the identity on $[T(\gamma,\overline r), \infty)$, then the distance between $\widetilde x_{\omega,\overline r}$ and any solution of $\dot x= f(x, \omega(\overline r t))$ starting at time $s\ge T(\gamma,\overline r)$ at $x_0\in\R^N$ with $|\widetilde x_{\omega,\overline r}(s)-x_0|<\ep$, converges to zero as $t\to\infty$.
Moreover, assume that if $\widetilde x_{\gamma,r}$  has a rate-induced tipping point $r^*_\gamma$, such tipping point is unique and transversal. The following statements hold true.
\begin{itemize}[leftmargin=25pt]
    \item[\textbf{1}.] For all $r>r^*_{\gamma}$ there is $\delta_1>0$ such that if $\omega\in\Omega(\gamma, t_0)$ and $\|\gamma-\omega\|_{\mathcal{C}(\R,\Lambda)}<\delta_1$ then $r\ge r^*_{\omega}$.
    \item[\textbf{2}.]  Assume additionally that for all $\omega\in\Omega(\gamma, t_0)$, if $\widetilde x_{\omega,r}$ undergoes a rate-induced tipping, the tipping point  $r^*_{\omega}$ is unique and transversal. Then, for all $r<r^*_{\gamma}$ there is $\delta_2>0$ such that if $\|\gamma-\omega\|_{\mathcal{C}(\R,\Lambda)}<\delta_2$ then $r\le r^*_{\omega}$.
\end{itemize}
\end{thm}

\subsubsection*{Invariance with respect to change of coordinates} Except for specific classes of problems an explicit formula to identify rate-induced tipping points is not available. Nevertheless, it is immediate to check that any change of time scale proportionally affects such critical rates. In general, diffeomorphisms of $\R^N$ will also affect both the past and future limit problems and the intermediate dynamics.

\section{Example}\label{sec:examples}
In this section, we consider a classic one-dimensional population model with an Allee effect, i.e.~with declining individual fitness for low population densities (see Courchamp et al.~\cite{courchamp2008}).
Firstly, we derive the values of the indicators presented in Sections~\ref{local_lI_subsection} to~\ref{sec:parameters}. Then, we perform a parametric study of their behaviour, and finally compare the resilience of chosen populations with concrete parameter settings. 

\subsection*{The model}
We shall study the parametric differential problem 
\begin{equation}\label{allee_model}
\dot{x}=f(x,r,L)=rx\left(1-\frac{x}{K}\right)\left(\frac{x}{L}-1\right),
\end{equation}
where $x\in \R^+$ represents the population size, $r>0$ the intrinsic growth rate, $K>0$ the carrying capacity of the environment, and $0<L\le K$ an Allee threshold. For the sake of convenience, we will set the carrying capacity to $K=1$. For $0<L<1$ the problem has three equilibria, at $x_0=0$, $x_1=1,$ and $x_L=L$. The first two are stable and the latter unstable. Therefore, $L$ determines a threshold below which the population can not persist. 

\subsection{Resilience of the attractor $x_1=1$}\label{resil_of_allee_subsec}
In this subsection, we apply the presented indicators to the attractor $x_1=1$ of the considered model $\dot x=f(x,r,L)$. Note that when $L=1$, the system undergoes a transcritical bifurcation and the equilibria $x_L$ and $x_1$ collide. When the calculations can not be performed explicitly, the software Matlab is used for numerical computation and simulation. In particular, we use the function {\tt ode45} with the options on the relative and absolute tolerance respectively set to {\tt RelTol=1e-12} and {\tt AbsTol=1e-12}.


\subsubsection*{Local indicators}
The characteristic return time of $x_1=1$ (see Definition~\ref{local_char_time} and Remark~\ref{rmk:char_return_equil}) is easily obtained as the opposite of the reciprocal of the derivative of $f$ at $x_1$: $T_R(1)=-1/{\textnormal{D}}f(1)=L/(r-rL).$ On the other hand, notice that for scalar systems, the amplification envelope is trivially determined---in our case $\rho(1,t)=e^{{\textnormal{D}}f(1)t}$---and does not provide further information.  Concerning other local indicators, thanks to Proposition~\ref{prop:linear_ind_chain}, we immediately have the following equivalence (up to the sign) of the reactivity, stochastic invariability, deterministic invariability and reciprocal of the characteristic return time  ${-R_0=I_\mathcal{S}=I_\mathcal{D}=-{\textnormal{D}}f(1)}$.

\subsubsection*{Basin shape indicators}
The distance to threshold (Definition~\ref{def_DT}) for the attractor is $DT(x_1)=1-L$. Note also that considering the set $C=[0,1],$  the latitude in volume (Definition~\ref{latitude_volume}) and the basin stability (Definition~\ref{basin_stab_def}) (with a uniform density $\rho(x)=\chi_{[0,1]}$) with respect to $C$  both yield the same value as the distance to threshold. Furthermore, the latitude in width (Definition~\ref{def_lw}) has an infinite value since the basin is unbounded on the right-hand side. 

\subsubsection*{Nonlinear transient dynamics}
The return time (Definition~\ref{return_time_general}) does not admit an explicit closed form. Therefore, we employ a routine to estimate it numerically. In Figure~\ref{table_result} we show an approximation of the mean return time from the set $C=(L,1)$ estimated via a Monte Carlo simulation (uniform sampling on the set $(L+10^{-7},1)$ with $10000$ points) depending on the parameters $r\in[0.01,0.5]$ and $L\in[0.05,0.95]$. For all points, the return time is approximated via a finite-time integration ending when the considered trajectory achieves $1-10^{-10}$.
 Note that the considered problem can also be interpreted as a gradient system. Thus, we are able to derive its resistance $W$ (Definition~\ref{potential_def}) as 
\[
W(x_1)=\int_L^{1} f(x,r,L)\, dx=-\frac{(L-1)^3(L+1)r}{12L}.
\]
The intensity of attraction $\mathcal{I}$ (Definition~\ref{intensity_def}) is obtained by calculating the maximum of $f$ on the interval $[L,1].$ We show the behaviour of $\mathcal{I}(x_1)$ depending on $r\in[0.01,0.5]$ and  $L\in[0.05,0.95]$ in Figure~\ref{table_result_cont}.
\subsubsection*{Variation of parameters}
The distance to bifurcation (Definition~\ref{dist_to_bif_def}) for this problem is $D_{\rm bif}(x_1)=1-L$  and therefore it coincides with the distance to threshold. In order to study the resistance and elasticity of $x_1=\{1\}$, we will consider a perturbation of the system's environmental capacity. Particularly, we aim to record the system's response to a reduction of the carrying capacity to $K_p=0.9,$ for an interval of time $[0,T]$, with $T=10$. If $K_p<L,$ the perturbed model is not ecologically meaningful. Hence, we will restrict the analysis only to $L<0.9.$ Since the problem at hand is scalar, the resistance (Definition~\ref{hison_res}) can be calculated simply as $R(x_1, K_p,T)=1-\phi_{K_p}(T,x_1)$. On the other hand, the elasticity (Definition~\ref{def:elasticity}) corresponds to the supremum of the weighted vector field $-f(x)/(1-x)$ over the interval $[\phi_{K_p}(T,x_1),1),$ which is attained at $\phi_{K_p}(T,x_1).$ 
In regards to persistence (Definition~\ref{def:persistence}), fixed $K_p>L,$ the persistence of $x_1$ with respect to the stress intensity $K_p$ is $P_{K_p}=+\infty,$ since the perturbed trajectories can not be driven out of the basin of attraction $(K_p,+\infty) \subset (L,+\infty).$ The same applies to the persistence with respect to a stress duration under the constraint that $K_p>L$, that is $P_T(x_1)=+\infty$. Alternatively, one can consider a simultaneous and consistent change of carrying capacity $K_p$ and Allee threshold, in the sense that if $K_p$ is reduced of a certain percentage, then also the Allee threshold is reduced of the same percentage. In this case, the value of $P_T(x_1)$ will depend on $T>0$.

\subsubsection{Parametric study}\label{subsub_parametric}
We perform a parametric analysis of~\eqref{allee_model}, with the aim of portraying the behaviour of some of the considered indicators as the transcritical bifurcation point $L=1$ is approached (results are summarized in Figures~\ref{table_result} and~\ref{table_result_cont}).

We focus on the parameter subspace given by $r\in[0.01,0.5]$, $L\in[0.05,0.95]$ and calculate seven indicators (see previous Subsection~\ref{resil_of_allee_subsec} for details): reciprocal of characteristic return time $EV=\frac{1}{T_R}$, distance to threshold $DT$, mean return time $T_R^{mean}(1,(L+10^{-7},1)),$
resistance $W,$ intensity of attraction $\mathcal{I}$, resistance $R$ and elasticity $E$. The last two indicators are calculated for the perturbation of the environmental capacity $K_p=0.9$ for a stress period $T=10.$  

\begin{figure}
	\centering
	\includegraphics[trim={0.5cm, 7.5cm, 0.5cm, 7cm},clip,scale=0.68]{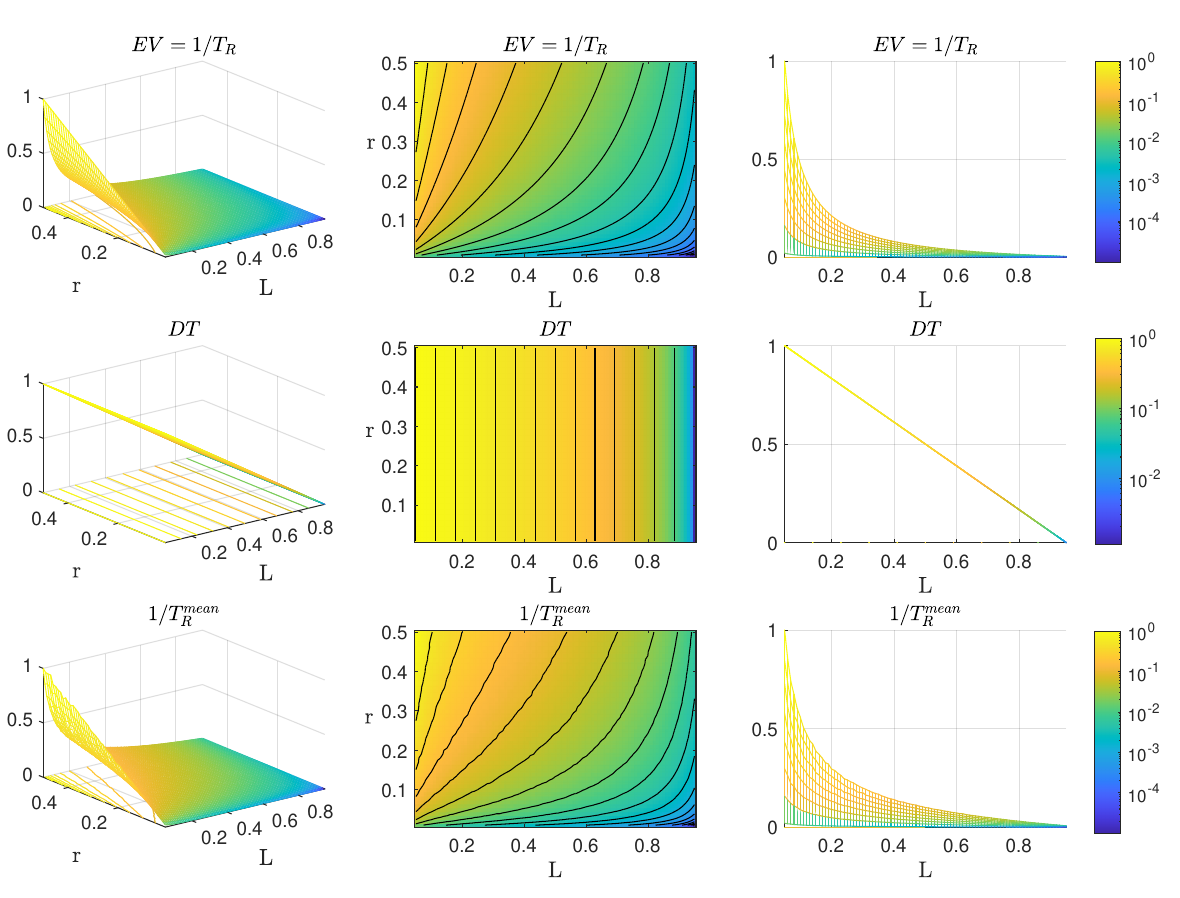}
	\caption{Comparison of the indicators for the attractor ${x_1=1}$ of the model~\eqref{allee_model} upon the variation of the parameters ${r\in[0.01,0.5]}$ and $L\in[0.05,0.95]$. From top to bottom: opposite of the real part of the dominant eigenvalue $EV=\frac{1}{T_R}$, distance to threshold $DT$, and the reciprocal of the mean return time $T_R^{mean}(1,(L+10^{-7},1))$. On the left, we see the scaled indicator values for the given parametric subspace. In the middle a heat map for the parametric subspace, where yellow indicates the highest and dark blue the lowest values of the indicators (see the logarithmic color scale on the right). Black lines correspond to the contour lines, which mark the parameter combinations with the same value of the indicators. If the environmental changes drive the parameters along the contour curves, all indicators, apart from $DT$ are not able to capture the approaching bifurcation independently. On the right, we see the dependence of the indicator values on the parameter $L.$ The three pictures on each row showcase the same surface viewed by different viewpoints.}
	\label{table_result}
\end{figure}

\begin{figure}
	\centering
	\includegraphics[trim={0.5cm, 5.5cm, 0.5cm, 5cm},clip,scale=0.68]{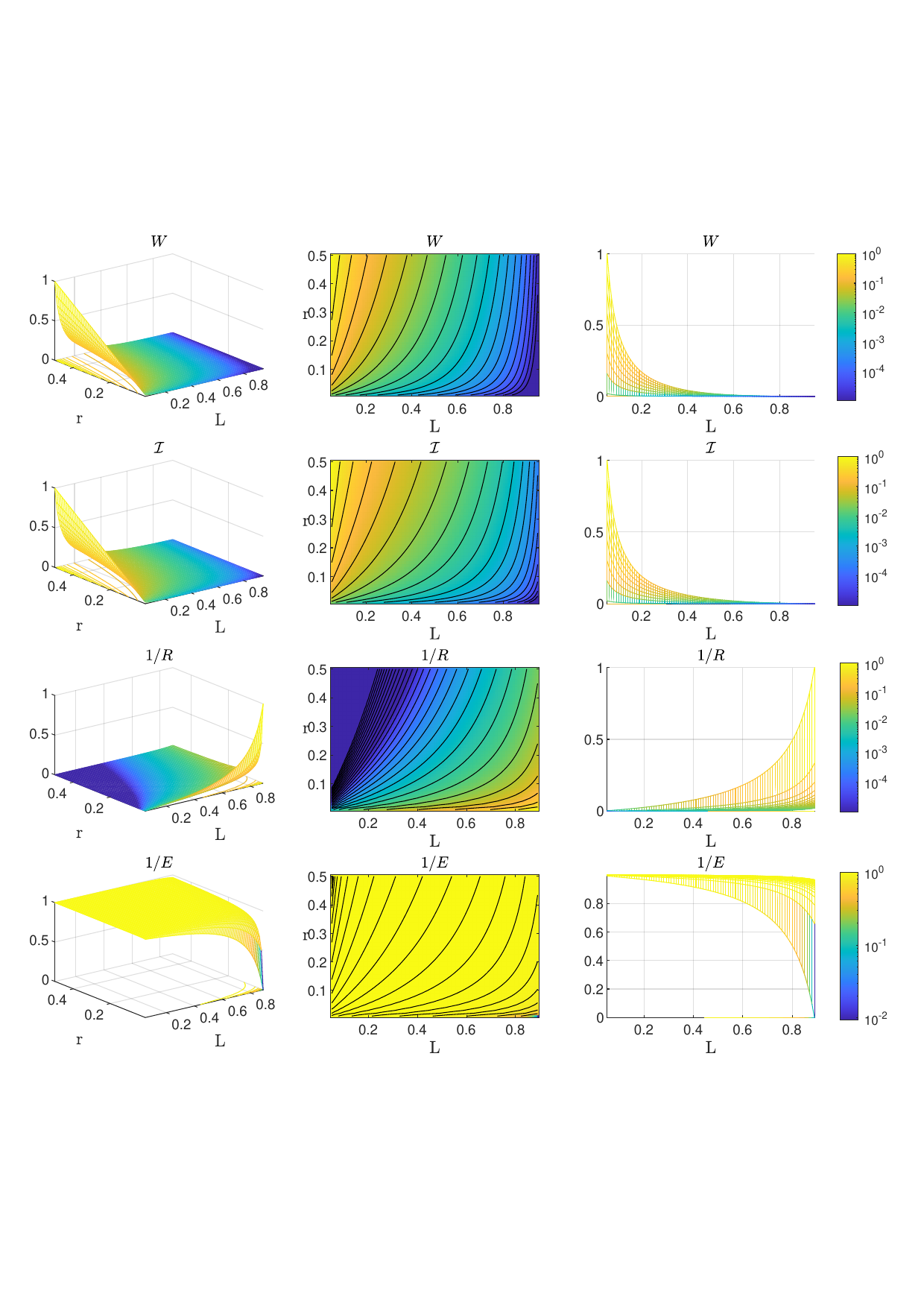}
	\caption{Continuation of the comparison of the indicators for different parameter choices of the model~\eqref{allee_model}: resistance (potential) $W,$ intensity of attraction $\mathcal{I},$ resistance $R$ and elasticity $E$ (please note that in order to have higher resilience corresponding to a higher numerical value, we worked with the reciprocal of $R$ and $E$). We consider perturbation of the environmental capacity to $K_p=0.9,$ for time $t=[0,10].$ On the left, we see the scaled indicator values for a given parametric subspace: $[0.01,0.5]= r \times L=[0.05,0.95]$ for resistance (potential) and intensity and  $[0.01,0.5]= r \times L=[0.05,0.89]$ for resistance and elasticity.} In the middle there is a heat map for the parametric subspace, where yellow indicates the highest and blue the lowest values of the indicators (see the logarithmic color scale on the right). Black lines correspond to the contour lines, which mark the parameter combinations with the same values of the indicator. On the right, we see the dependence of the indicator values on the parameter $L.$ The three pictures on each row showcase the same surface viewed by different viewpoints. We see that resistance and elasticity behave in an opposite manner, elasticity being higher for lower values of $\{r,L\}$ while resistance being lower. 
	\label{table_result_cont}
\end{figure}

   For a better comparison of the indicators, the estimates shown in  Figures~\ref{table_result} and~\ref{table_result_cont} are scaled to lie in $[0,1]$ by $(x-x_{\min})/(x_{\max}-x_{\min}),$ where $x$ represents the indicator, and minimum $x_{\min}$ and maximum $x_{\max}$ corresponds to the local minimum and maximum of the indicator on the considered subset of parameters.
    For the same reason, instead of $T_R^{mean},R,E$, we represent their reciprocal so that the value $1$ always corresponds to the highest estimate of resilience. Additionally, resistance and elasticity we further restrict the parameter space so to guarantee that these indicators are well-defined. 

It is possible to appreciate that upon fixing a value $r>0$ and increasing $L$, all the considered indicators capture a loss of resilience for the system as the bifurcation point approaches. The only exception is given by the resistance which increases since the rate of exponential decay towards the attractor decreases in the nearby of the bifurcation point. This phenomenon is also directly related to the so-called critical slowing-down effect~\cite{dakos2015resilience, strogatz2018nonlinear,scheffer2009}.
On the other hand, there are also parameter combinations, indicated by the black contour lines, where the indicator stays constant.
\subsubsection{Comparison of five species}\label{subsub_comparison}
Finally, we aim to showcase the behaviour of the indicators with respect to five different species growth strategies for the Allee effect model~\eqref{allee_model}. The different values of the parameters as well as the respective graphs of $f(x,r,L)$ for $x\in[0,1]$ are shown in Figure~\ref{system_allee-params}. Given the previous considerations, we shall focus on the following indicators of resilience: the opposite of the dominant eigenvalue $EV=1/T_R,$ distance to threshold $DT,$  mean return time $T_R^{mean}(1,(L+10^{-7},1)),$ resistance (potential) of a gradient system $W,$ intensity of attraction $\mathcal{I},$ resistance $R$ and elasticity $E.$ 
\begin{figure}[]
	\begin{subfigure}{0.35\textwidth}
		\centering
		\begin{tabular}{@{} *3l @{}}    \toprule
			\emph{Species} & \emph{r}& \emph{L}  \\\midrule
			Species $1$ &  $0.5$ & $0.2$  \\ 
			Species $2$ & $1.3$ & $0.3$ \\ 
			Species $3$ &$ 2.5$& $0.4$\\
			Species $4$ &$5$ &$0.6$\\
			Species $5$ &$10$ &$0.7$\\
			\bottomrule
			\hline
		\end{tabular}
	\end{subfigure}
	\begin{subfigure}{0.5\textwidth}
		\includegraphics[scale=0.3,trim={3cm 9.54cm 3cm 9.5cm},clip,height=4.6cm]{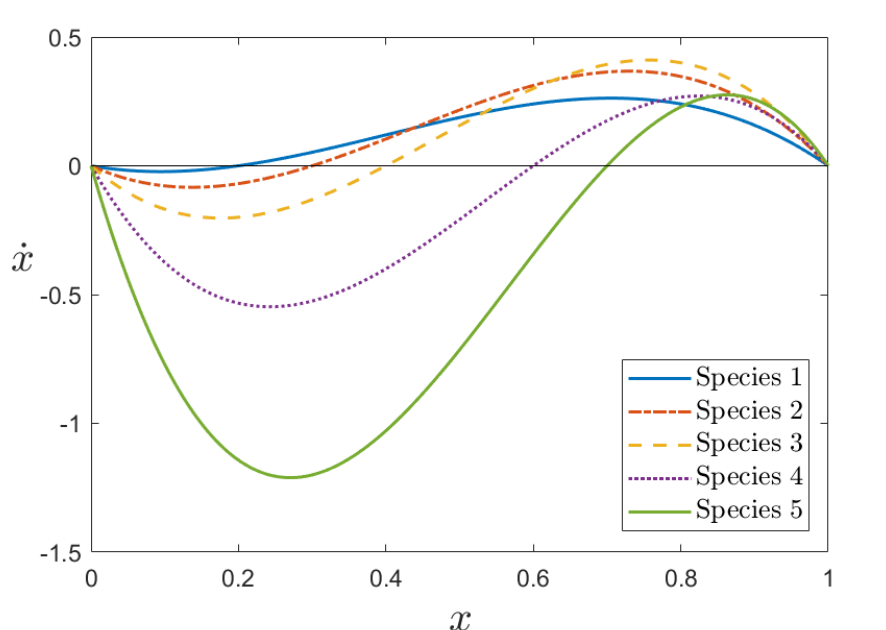}
	\end{subfigure}
	\caption{Five population strategies with different choices of parameters in equation~\eqref{allee_model}. The table on the left-hand side contains the parameters for each species. The plot on the right-hand side showcases the graphs of $f(x,r,L)$ for $x\in[0,1]$ depending on the chosen values of parameters $r$ and $L$.}\label{system_allee-params}
\end{figure}

The numerical values for these indicators are shown in Figure~\ref{table_results_5species}. In all cases, $x_1=1$ is the reference attractor. Each row corresponds to a single indicator, each column to a species. The chromatic scale applies row-wise as the values of the indicators have not been normalized. Darker tones of blue correspond to higher values of resilience. It is apparent how answering the question ``Which species is the most resilient?" is not trivial. The ranking of the resilience of each species varies depending on the considered indicator.

In particular, it is interesting to notice that, not only there is no overall agreement on the resilience ranking, but some indicators ($EV$ and $DT$ for example) provide antipodal responses. It is possible to single out three main clusters: a) $EV$ and $1/T_R^{mean}$, b) $1/E$, $I$ and $W$, c) $1/R$ and  $DT$. With the exception of species ``4" all the other species are considered as the most resilient for at least one of the represented indicators.

\begin{figure}
	\centering
	\includegraphics[scale=0.35,trim={0cm 0cm 0cm 1cm},clip]{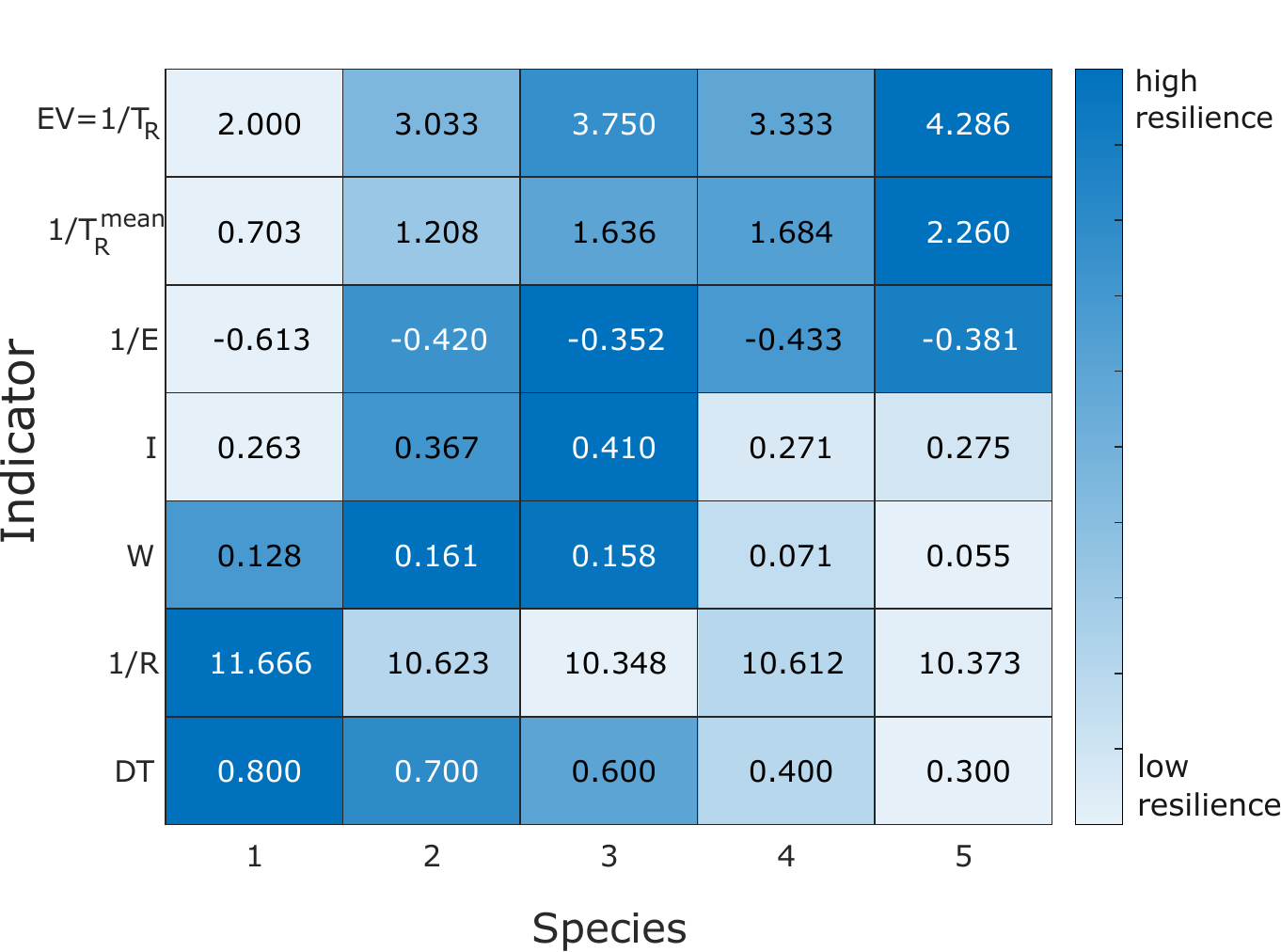}
	\caption{Each row represents one indicator and each column one species.  The chromatic scale applies row-wise as the values of the indicators have not been normalized. Darker tones of blue correspond to higher values of resilience. With the exception of species 4 all the other species are considered as the most resilient for at least one of the represented indicators.}\label{table_results_5species}
 \end{figure}
 
 \begin{figure}[]
	\begin{subfigure}{0.45\textwidth}
		\includegraphics[scale=0.45,trim={4.1cm 9.2cm 0 9cm},clip]{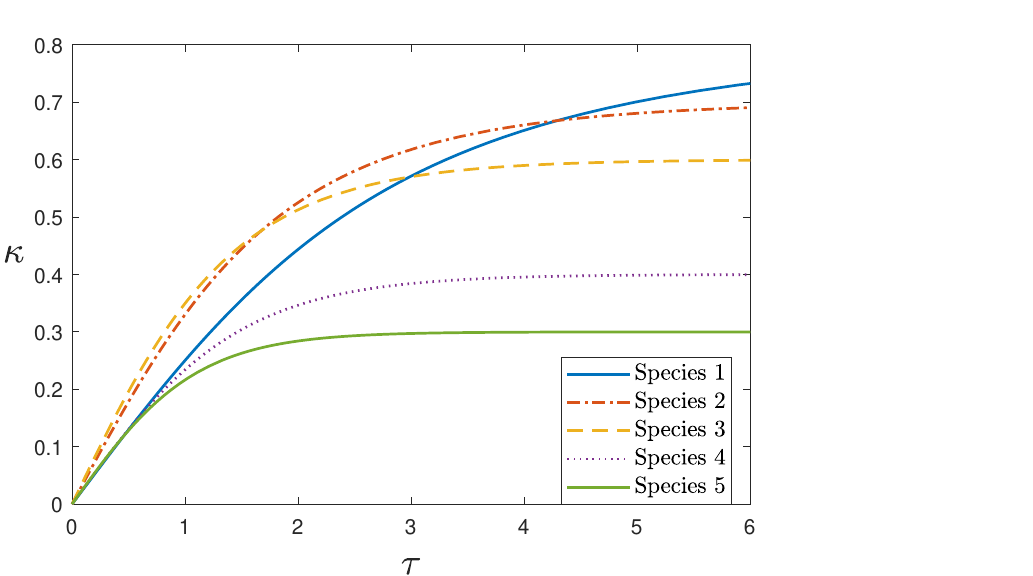}
	\end{subfigure}
	\begin{subfigure}{0.52\textwidth}
		\includegraphics[scale=0.44,trim={0cm 0cm 0 0cm},clip]{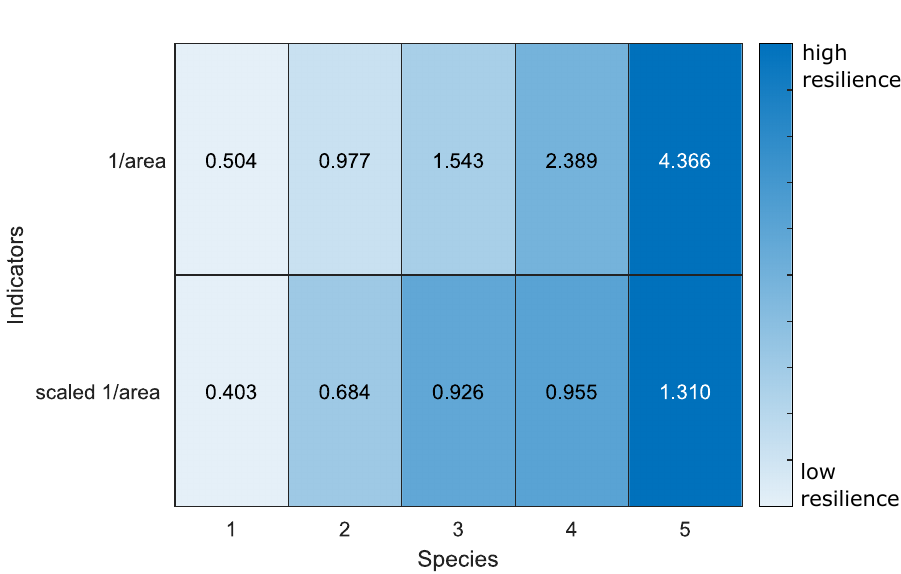}
	\end{subfigure}
	\caption{On the left-hand side, the resilience boundaries of five species (see Figure~\ref{system_allee-params}) modelled through~\eqref{allee_model}. On the right-hand side, first row, the numerical values of the areas between the resilience boundary and the respective distance to threshold calculated for each species. The second row contains the same values now divided by the respective distances to threshold   $DT(x_1)=1-L$. The color gradient should be intended row-wise. Darker shades of blue correspond to a higher resilience. }
	\label{resil_bound_results}
\end{figure}

 Finally, in Figure~\ref{resil_bound_results} we can see the resilience boundaries for the considered five species. As studied in~\cite{meyer2018}, it is possible to extrapolate a measure of resilience for the attractor by calculating the area between the resilience boundary and the line $\kappa= DT(x_1)=1-L$ (see Subsection~\ref{resil_boundary_subsec}). However, since the distances to threshold for the five species are different it seems natural to ask if the obtained values are representative and comparable. The right panel of 
Figure~\ref{resil_bound_results} shows the numerical values obtained for the indicator before and after dividing them by the respective distances to threshold.

\subsection{Resilience of the attractor $x_0=0$.}\label{subsec:allee_attr_0}

In this subsection, we perform a (shorter) analysis of resilience for the attractor $x_0=0$. In order to solve the singularity arising from posing $L=0$ in \eqref{allee_model}, we shall study the problem 
\begin{equation}\label{eq:allee_model_2}
\dot{x}=g(x,r,L)=rx\left(1-x\right)\left(x-L\right), \quad x\ge0,
\end{equation}
which is equivalent to \eqref{allee_model} when $L>0$ (and $K=1$), via the variation of time-scale $t\mapsto Lt$. When $L$ tends to zero, the continuous variation of the solutions can be deduced using Tikhonov's theorem~\cite{tikhonov1952systems}. Note that when $L=0$, \eqref{eq:allee_model_2} undergoes a transcritical bifurcation and the equilibria $x_L$ and $x_0$ collide. 


\subsubsection*{Local indicators}
Reasoning as in the previous section we obtain the characteristic return time of $x_0=0$ (see Definition~\ref{local_char_time} and Remark~\ref{rmk:char_return_equil}) as the reciprocal of the derivative of $g$ at $x_0$: $T_R(0)=-1/{\textnormal{D}}g(0,r,L)=(rL)^{-1}.$ Here again,  the amplification envelope  $\rho(0,t)=e^{{\textnormal{D}}g(0,r,L)t}$ does not provide further information, while the same equivalence (up to the sign) of the reactivity, stochastic invariability, deterministic invariability and reciprocal of the characteristic return time  holds thanks to  Proposition~\ref{prop:linear_ind_chain}.

\subsubsection*{Basin shape indicators}
The distance to threshold (Definition~\ref{def_DT}) for the attractor is $DT(x_0)=L$. Note also that considering the set $C=[0,1],$  the latitude in volume (Definition~\ref{latitude_volume}) and the basin stability (Definition~\ref{basin_stab_def}) (with a uniform density $\rho(x)=\chi_{[0,1]}$) with respect to $C$  both yield the same value as the distance to threshold. Furthermore, since the model has no biological meaning for $x<0$, the latitude in width (Definition~\ref{def_lw}) must be restricted to the positive cone, and therefore it would coincide with the distance to threshold. 

\subsubsection*{Nonlinear transient dynamics}
Same as for the attractor $x_1=1$, we use a numerical routine to approximate the return time (Definition~\ref{return_time_general}). The results are now shown in  Figure~\ref{table_result_2}. We have used the same setting and stopping criteria for the Monte Carlo simulation as before (uniform sampling on the set $(0, L-10^{-7})$ with $10000$ points) depending on the parameters $r\in[0.01,0.5]$ and $L\in[0.05,0.95]$. 
The resistance $W$ (Definition~\ref{potential_def}) is obtained as 
\[
W(x_0)=\int_L^{0} g(x,r,L)\, dx=\frac{L^3(2-L)r}{12}.
\]
The intensity of attraction $\mathcal{I}$ (Definition~\ref{intensity_def}) is obtained by calculating the maximum of $g$ on the interval $[0,L].$ We show the behaviour of $\mathcal{I}(x_0)$ depending on $r\in[0.01,0.5]$ and  $L\in[0.05,0.95]$ in Figure~\ref{table_result_2}.
\subsubsection*{Variation of parameters}\label{subsub_v_parameters_0}
The distance to bifurcation (Definition~\ref{dist_to_bif_def}) is ${D_{\rm bif}(x_0)=L}$  and therefore it coincides with the distance to threshold. We also note that a change of the carrying capacity $0<K_p$ for \eqref{eq:allee_model_2} does not affect the equilibrium at $x_0=0$, in the sense that $x_0=0$ keeps being an equilibrium for the new system. Consequently, the resistance of $x_0$ for such variations is always zero while the elasticity is not even well-defined---the quantity $\Phi_{K_p}(t,a,1)$ in Definition~\ref{def:elasticity} is equal to zero. 
Concerning the persistence, a reasoning analogous to the one for the attractor $x_1=1$ holds, in the sense that no variation of the carrying capacity is able to change the equilibrium at $0$ nor to drive it outside its basin of attraction (unless the carrying capacity itself reaches zero, which, however, would make the model inconsistent).

\subsubsection{Parametric study}

We perform a parametric analysis of~\eqref{eq:allee_model_2}, with the aim of portraying the behaviour of some of the considered indicators as the transcritical bifurcation point $L=0$ is approached (results are summarized in Figure~\ref{table_result_2}). The same remarks about the setup, reparametrization and comparability of the indicators as in~\ref{subsub_parametric} hold. However, the indicators of resistance and elasticity are not calculated, since a variation of the carrying capacity does not alter their values as explained in subsection~\ref{subsub_v_parameters_0}. Therefore, the calculated indicators are: reciprocal of characteristic return time $EV=\frac{1}{T_R}$, distance to threshold $DT$, mean return time $T_R^{mean}(0,(0,L-10^{-7})),$
resistance $W$ and intensity of attraction $\mathcal{I}$. Interestingly enough, the indicator of mean return time captures a phenomenon that remained elusive to all the other indicators: as $L$ approaches the values $1$, where a transcritical bifurcation for the equilibria $x_1$ and $x_L$ occurs, the leading Lyapunov exponent for $x_L$ tends to zero and a ``slowing-down" of nearby trajectories appear. Consequently, solutions starting nearby the ``weakly" unstable equilibrium $x_L$ linger close to it for longer intervals of time. This increases the mean return time to $x_0=0$.

\begin{figure}
	\centering
	\includegraphics[trim={1.5cm, 4.2cm, 0.2cm, 3.5cm},clip,scale=0.68]{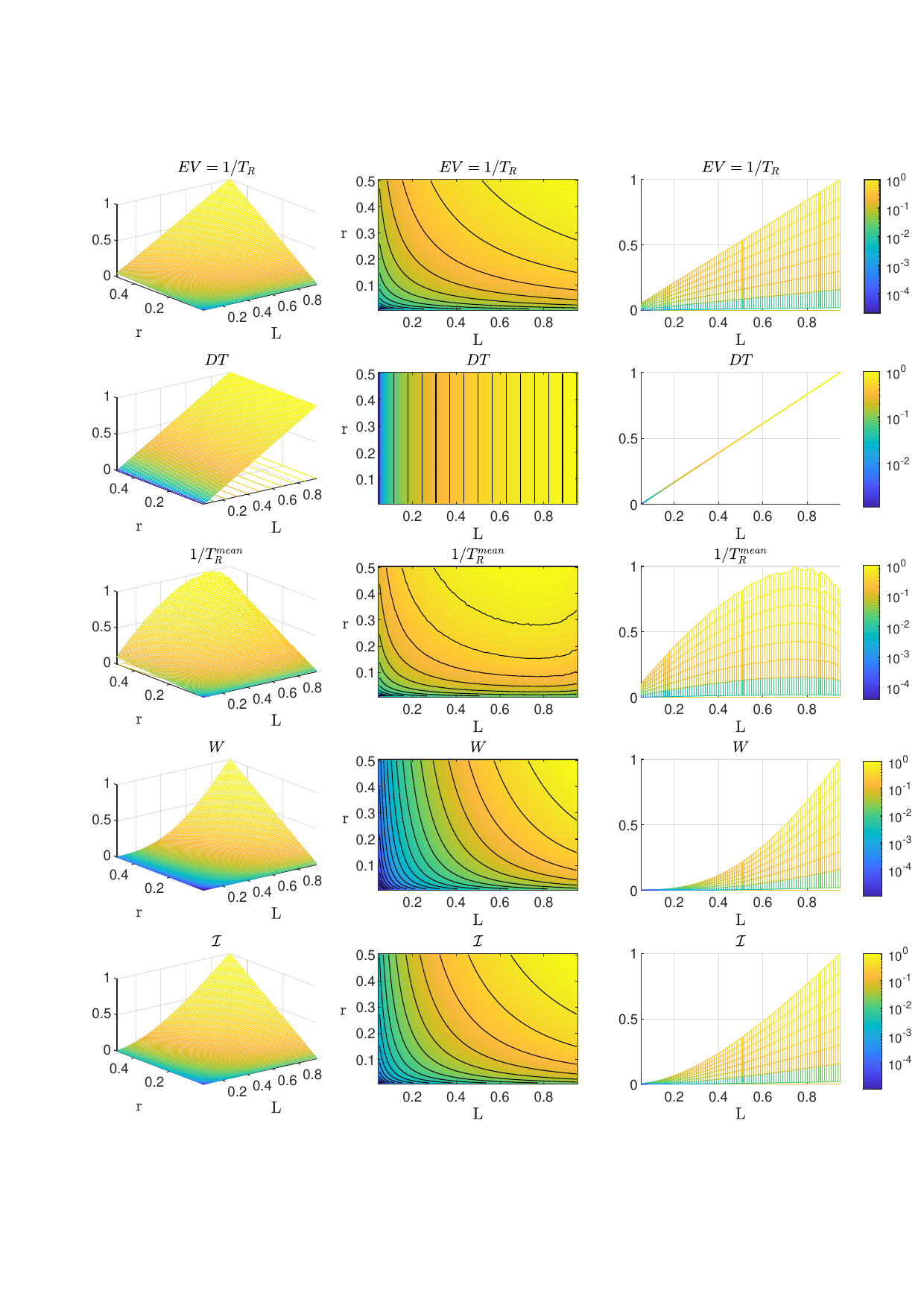}
	\caption{Comparison of the indicators for the attractor ${x_0=0}$ of the model~\eqref{eq:allee_model_2} upon the variation of the parameters ${r\in[0.01,0.5]}$ and $L\in[0.05,0.95]$. From top to bottom:  $EV=\frac{1}{T_R},$  $DT,$  $T_R^{mean}(0,(0, L-10^{-7})),$ $W(0)$ and $I(0)$. On the left, we see the scaled indicator values for the given parametric subspace. In the middle a heat map for the parametric subspace, where yellow indicates the highest and dark blue the lowest values of the indicators (see the logarithmic color scale on the right). Black lines correspond to the contour lines, which mark the parameter combinations with the same value of the indicators. If the environmental changes drive the parameters along the contour curves, all indicators, apart from $DT$ are not able to capture the approaching bifurcation independently. On the right, we see the dependence of the indicator values on the parameter $L.$ The three pictures on each row showcase the same surface viewed by different viewpoints.}
	\label{table_result_2}
\end{figure}

\subsection{Synthesis, comparisons and remarks}In this section, we carried out the analysis of a scalar population model with Allee effect~\eqref{allee_model} in terms of the resilience of its attractors at $x_1=1$ and $x_0=0$. The different outcomes given by some of the indicators of resilience presented in this paper have been tested against the variation of the growth rate and of the Allee threshold (both treated as parameters). For the attractor $x_1$, the indicators were also compared across five different species with fixed pairs of parameters (see part~\ref{subsub_parametric} and~\ref{subsub_comparison}). The resulting absence of a non-ambiguous answer to the question \emph{``which species is more resilient?"}, may seem contradictory at first, but it is a known feature of resilience. The measured resilience depends on both, the specific dynamic feature under analysis, and the class of acting perturbations. As pointed out by Carpenter et al.~\cite{carpenter2001} and Tamberg et al.~\cite{tamberg2022} one needs to carefully define the context and questions before coming to conclusions about resilience. 

For instance, if we are interested in the survival of a species, that is, the resilience of the equilibrium $x_1$, the following conclusions may be draft. If we are solely interested in the largest one-time shock that the species can withstand (for example a sudden ecological catastrophe), but we are not concerned with the dynamics of the recovery, the most suitable indicator is distance to threshold $DT$ (or other basin shape indicators--- for their comparison see Proposition~\ref{prop:DT_width_volume}). As explained in subsection~\ref{resil_of_allee_subsec} and graphically shown in Figure~\ref {table_result}, $DT$ is exclusively dependent on the value of the Allee threshold $L$ and is not changed by the magnitude of the growth rate $r$. From a biological point of view, this might be interpreted as the number of individuals in the species that have to suddenly disappear for the species to go extinct, independently of its growth rate. If the Allee threshold is crossed, the growth rate will not help. The lower $L$ is, the weaker the Allee effect, which means the species population can sustain bigger shocks from which they can eventually recover. Thus, we observe the highest resilience for values of $L$ close to $0$ and the lowest for high Allee thresholds around $1$ (see Figure~\ref{table_result}).

Conversely, when the focus is on the speed of recovery after a single ``small" perturbation of the number of individuals in the species, a local indicator such as $EV$ may better fulfill the task. Indeed, $EV$ corresponds to the exponential rate of convergence towards a hyperbolic attracting solution. In our model, this indicator depends on both the intrinsic growth rate $r$ and threshold $L$ (see subsection~\ref{resil_of_allee_subsec} and Figure~\ref{table_result}). A higher growth rate $r$ results in a faster recovery, whereas the higher the Allee threshold $L$, the slower the recovery. As extensively explained in Section~\ref{local_lI_subsection}, all the local indicators are reliable only when ``small" perturbations are taken into account. Whenever a larger perturbation is considered the local indicators such as $EV$ might not be informative enough and we have to turn to other transient dynamics indicators. One straightforward choice is the global version of the local indicator $EV:$ the mean return time $T_R^{mean}.$  In our model, $T_R^{mean}$ of the attractor $x_1=1$ shows overall similar behaviour as the local indicator $EV$. This is 
mostly due to the extreme simplicity of the model at hand. In general, however, these indicators may provide substantially different results---examples of this can be found in part~\ref{subsub_comparison}.

Next, we focus on continuous and repeated perturbations. In biological terms, we can consider a perturbation caused by a permanent outflow of species (imagine for example fishing, hunting, or illness). This can be modeled as $\dot{x}=f(x)+g(t),$ where $f(x)$ is the unperturbed dynamics and $g(t)$ is the perturbation term (e.g.~fishing). Then, the indicator intensity $\mathcal{I}$ will find the critical magnitude of the outflow term $g(t)$ after which the species can still recover. The overall trend seen in Figure~\ref{table_result_cont} is similar to that of $EV$---the higher the reproductive speed in terms of growth rate $r$, the better can the species compensate for the outflow, although there are still differences with other indicators (see part~\ref{subsub_comparison}).

Another type of perturbation, different from those already mentioned, is recurring stress on the species numbers. This is where periods of no disturbance are followed by strong and rapid stress on the species population. Think, for instance, of a seasonal illness or periodic fishing. This can be modeled in terms of periodic, discrete shocks of some strength. In this case, the resilience boundary is the most obvious choice of indicator. It captures both the magnitude of shocks that a species can sustain to not fall below the critical population numbers given by the Allee threshold, as well as, the recovery capabilities of the species during the periods without perturbation. High values of growth rate $r$ help the recovery rate, and low values of Allee threshold $L$ permit severe shocks to the species number to still lie in the recoverable numbers (see part~\ref{subsub_comparison} for comparison).

A different interpretation for the transient dynamics is given by the indicator of resistance (potential) which quantify how much work needs to be put into eradicating the species at hand (see Figure~\ref{table_result_cont}). In our context this is possible due to the specific structure of the equation that can be written as a gradient problem. It is not surprising that this indicator provides estimates in accordance with the other indicators of resilience for transient dynamics---the higher the growth rate, the harder it is to eliminate the species. Likewise, the weaker the Allee effect, the harder it is to eradicate species below the threshold $L.$

On the other hand, the changing environment is another type of stress species factor. In the previous subsection, we discussed the effect of lowering the species' carrying capacity. This could be caused, for example, by deteriorating environmental conditions. In terms of resilience, we might ask the following questions: once the environmental stress factor has ended, what is the impact on the species' population numbers? If the species can recover (which is not always possible in general) how fast the recovery is? The former can be answered by measuring the resistance (Harrison) $R$, the latter by elasticity $E$ (see Figure~\ref{table_result_cont}). Resistance (Harrison) quantifies how the population numbers decrease after the stress ends. What we can notice is that populations with higher growth rates are more sensitive to the decline of carrying capacity. The same is true for low values of the Allee threshold. These species decline faster due to the way the model is defined where the overall tendency of the vector field to have a higher magnitude also leads to a faster decrease in the population numbers.
Elasticity, on the other hand, determines the speed of recovery after the stress ends by measuring the species population's ability to flourish again. Notice that for each species it finds the slowest recovery speed, thus, the worst-case scenario. The starting position of recovery is described by resistance, therefore, they are related. We notice that species with higher growth rates $r$, are better equipped to recover, by quickly reproducing their numbers, also for low Allee thresholds. Apart from resistance, this trend is consistent with other indicators mentioned. We can see that the species can have different strategies to fight temporal environmental stresses: typically,  resistance and elasticity are opposite aspects of resilience. Finally, we like to stress that similar comments can be made for the equibrium $x_0=0$, which represents the extinction state. The interest of the extinctions state lies in the opportunity to control infesting species or diseases. Through the parametric study carried out in Subsection~\ref{subsec:allee_attr_0}, and the numerical evidence showcased in Figure~\ref{table_result_2}, we illustrated, once again, the different outcomes of resilience indicators. Particular emphasis is posed on the mean return time which displays a new phenomenon: the long persistence of an infesting species (which is eventually destined to extinction) when the Alee threshold approaches the carrying capacity. In practical terms, this case seems quite rare in reality where the Alee threshold is usually considered in $(0,0.5)$.

\section{Summary and conclusions}\label{sec:conclusions}
The notion of dynamical resilience has prompted a huge scientific interest ever since its first introduction by Holling~\cite{holling1973}. Given the variety of indicators that arose under this headline, a unitary or preferred definition seems unattainable at this point. Nevertheless, a systematic classification of the available indicators using rigorous mathematical formalism was long needed. In this paper, we intentionally try to give a broad perspective beyond ecological models---where resilience appeared in the first place---and provide a structural view of the matter from the abstract perspective of dynamical systems theory. For the same reason, we also include those indicators that have a rigorous formalization but can often be difficult to calculate analytically for real systems.

We divide the indicators of resilience into four groups and present them through rigorous mathematical formalism: local linear indicators (Section~\ref{local_lI_subsection}), indicators that describe the shape of
the basin (Section~\ref{secbasinshape}), indicators that characterize the transient dynamics in the basin (Section~\ref{sec:transient}), and indicators tailored to parameter changes (Section~\ref{sec:parameters}). Advantages and disadvantages of each class and single indicators are discussed in detail in each section.

Our approach not only allows to generalize the available indicators to local attractors beyond equilibria and periodic orbits, but also to set up a common framework to compare them and from which further research can proceed in an organized fashion. Let us provide concrete examples supporting these statements.

In Section~\ref{secbasinshape} the most classical indicators of ecological resilience are presented and contrasted. Thanks to a rigorous formalization, it is possible to prove simple---and yet so far missing---results that relate such indicators and use the distance to threshold as a common benchmark for all the others. Moreover, this helps in creating further relations between this class of indicators and more recent ones (e.g.~between precariousness and the resilience boundary). On the other hand, we have for example seen that distance to threshold can be also used to interpret the simplest indicator of resilience in parameter space, distance to bifurcation (see subsection~\ref{dist_to_bif_subsec}). This fact gave us the opportunity to naturally extend the concept of resilience to rate-induced tipping, a nonautonomous bifurcation which can appear when a adiabatic change of a parameter is substituted by  a time-dependent one (see Subsection~\ref{subsec:R-tipping}).

To keep the presentation as broadly accessible to a variety of different disciplines, we only touched upon some very interesting ideas whose presentation, however, requires a more technical treatment. For example, we decided to maintain the core of this article autonomous and use nonautonomous dynamical systems only where strictly necessary. Yet, the inherent nature of resilience requires a better understanding of the role that time plays in the perturbation of a system, either deterministic or random. Therefore, we foresee that nonautonomous dynamical systems theory will play an important role in a further understanding of resilience, as also the most recent contributions in the area seem to suggest (see Sections~\ref{sec:transient} and~\ref{sec:parameters}). 
For example, we used the notions of exponential dichotomy and hyperbolic solutions to reinforce the local indicators in Section~\ref{local_lI_subsection}. In fact, the classical indicators using the variational equation to infer asymptotic rates of convergence of hyperbolic equilibria or periodic orbits remain reliable under sufficiently small perturbations due to persistence of hyperbolicity in this generalized sense. In analogy, the study of more general exponentially stable attractors requires the notion of persistence of normally hyperbolic invariant manifolds. We have intentionally avoided treating this topic explicitly in order to provide a shorter and more streamlined presentation of the basic structural aspects of defining different notions of resilience. We are also certain that the interested reader with knowledge in normally hyperbolic invariant manifolds can carry out the relative extensions. 

We also like to stress the fact that this paper does not cover the (equally interesting) interpretations of resilience, which do not fit into the setting established in Section~\ref{sec:prelimin}. Among them, there is for example statistical model-based indicators (see Adamson et al.~\cite{adamson2020}), the analysis of pattern-formation in ecological systems (e.g.~\cite{bastiaansen2020effect,reyer2015forest}), certain resilience concepts in economics (e.g.~\cite{briguglio2009economic}) and in the social sciences (e.g.~\cite{iianalysing}). It would be very interesting---and highly nontrivial---to integrate these approaches within a coherent mathematical framework and we hope that a dedicated research effort will go in this direction.

In summary, the formalization and classification carried out in this work is important to ensure a more reliable, quantitatively comparable and reproducible study of resilience in dynamical systems which can stimulate further research in the area and facilitate quantitative application-based comparisons.

\appendix
\section{Rate induced tipping and proof of Theorem~\ref{thm:res_rate_induced_tipping}}\label{appendix:rate}

Hereby, we shall give a more precise presentation of the assumptions and results on resilience against rate-induced tipping that were briefly presented in Subsection~\ref{subsec:R-tipping}. We deal with differential equations of the type, 
\[
\dot x =f\big(x,\gamma(rt)\big),\qquad x\in\R^N,t\in\R,r>0,
\]
where 
$f\in C^2(\R^N\times\R^M,\R^N)$ and $\gamma\in C(\R,\Lambda)$, with $\Lambda\subset\R^M$ compact and connected, is such that for some $\lambda_0,\lambda_\infty\in\Lambda$,
\[
\lim_{t\to-\infty}\gamma(t)=\lambda_0,\quad \lim_{t\to\infty}\gamma(t)=\lambda_\infty\quad\text{and}\quad\lim_{t\to\pm\infty}\gamma'(t)=0.
\]
Moreover, we assume that for all $\lambda\in\Lambda$ the dynamical systems induced by~\eqref{eq:parametric-ode} are topologically equivalent and their dynamics is completely determined by hyperbolic fixed points (for attractors which are not fixed points see for example~\cite{alkhayuon2018rate,kaszas2019tipping,longo2021critical,longo2021rate}).

Let $x_0^s\in\R^N$ be any fixed stable hyperbolic equilibrium  for~\eqref{eq:parametric-ode}$_{\lambda_0}$ and $x_\lambda^s\in\R^N$ be the ``corresponding'' stable hyperbolic equilibrium for ~\eqref{eq:parametric-ode}$_{\lambda}$. This means that if $\{h_{\lambda}:\R^N\to\R^N\}$ is a $\lambda$-continuous family of time-preserving homeomorphisms guaranteeing the topological equivalence, then $x_\lambda^s=h_{\lambda}(x_0^s)$. 
Thanks to Theorem 2.2 in~\cite{ashwin2017parameter}, for every $r>0$, there is a unique maximal solution of~\eqref{eq:init-probA}$_r$ $\widetilde x_{\gamma,r}:(-\infty,\beta_{\gamma, r})\to\R^N$, with $\beta_{\gamma, r}\in\R\cup\{\infty\}$, such that $\lim_{t\to-\infty}\widetilde x_{\gamma,r}(t)=x_0^s$. 
Additionally, this solution is \emph{locally pullback attractive} (a weak form of attractivity which holds only for the past and is specific of nonautonomous systems; see~\cite{kloeden2011} for more information). Notice also that, thanks to Lemma 2.3 in~\cite{ashwin2017parameter}, there is $r_0>0$ such that for all $0<r<r_0$, $\beta_{\gamma, r}=\infty$ and $\lim_{t\to\infty}\widetilde x_{\gamma,r}(t)=x_{\lambda_\infty}^s$. We can now give the definition of rate-induced tipping.

\begin{defn}\label{def:R-tipping}
Under the introduced notation and assumptions, we say that the locally pullback attracting solution $\widetilde x_{\gamma,r}$  \emph{undergoes a rate-induced tipping at $r_\gamma^{*}(x_0^s)>0$} if  $\beta_{\gamma, r}=\infty$ and $\lim_{t\to\infty}\widetilde x_{\gamma,r}(t)=x_{\lambda_\infty}^s$ for all $r\in(0,r^*_\gamma(x_0^s))$, and either $\beta_{\gamma, r_\gamma^{\scaleto{*}{3pt}}(x_0^s)}<\infty$ or $\lim_{t\to\infty}\widetilde x_{\gamma,r_\gamma^{\scaleto{*}{3pt}}(x_0^s)}(t)\neq x_{\lambda_\infty}^s$. Moreover, we will call a rate-induced tipping \emph{transversal} if there is $\delta>0$ such that for all $r\in[r_\gamma^{*}(x_0^s), r_\gamma^{*}(x_0^s)+\delta]$, $\beta_{\gamma, r}<\infty$ or $\lim_{t\to\infty}\widetilde x_{\gamma,r}(t)\neq x_{\lambda_\infty}^s$.
\end{defn}
The critical rate correspond to the value 
\[
r_\gamma^{*}(x_0^s)=\sup\left\{\rho>0 \ \bigg|\  \beta_{\gamma, \rho}=\infty \text{ and }\lim_{t\to\infty}\widetilde x_{\gamma,\rho}(t)=x_{\lambda_\infty}^s\text{ for all }\rho\le r\right\}\in\R\cup\{\infty\}.
\]

We can now prove Theorem~\ref{thm:res_rate_induced_tipping} which is a first result guaranteeing the lower semi-continuity (and if it applies the continuity) of $r^*_\gamma$ with respect to $\gamma$ in a specific set of functions: fixed $t_0\in\R$, we consider the set $\Omega(\gamma, t_0)$ of twice continuously differentiable functions $\omega:\R\to\Lambda$ such that $\omega(t)=\gamma(t)$ for all $t\le t_0$ and $\lim_{t\to\infty}\omega(t)=\lambda\in\Lambda$.\par\bigskip

\noindent\emph{Proof of Theorem~\ref{thm:res_rate_induced_tipping}}
In order to prove \textbf{1} let us reason by contradiction. Assume that there is $\overline r >r^*_{\gamma}$ and a sequence $(\omega_n)_\nin$ in $\Omega(\gamma, t_0)$ such that $\|\gamma-\omega_n\|_{\mathcal{C}(\R,\Lambda)}<1/n$ and $\overline r <r^*_{\omega_n}$ for all $n\in\N$. Now, note that the sequence of functions $f\big(x,\omega_n(t)\big)$ converges to $f\big(x,\gamma(t)\big)$ in the compact-open topology. Therefore, thanks to Kamke's lemma (see Sell~\cite{sell1971topological}) the sequence of solutions $(\widetilde x_{\omega_n,\overline r})_{n\in\N}$ converges, up to a sub-sequence, to $\widetilde x_{\gamma,\overline r}$  uniformly on compact intervals contained in the maximal interval of definition of the latter. In fact, due to construction, $\widetilde x_{\omega_n,\overline r}(t)=\widetilde x_{\gamma,\overline r}(t)$ for all $\nin$ and $t\le t_0$, and thus the uniform convergence holds on all intervals of the form $(-\infty,T]$ with $T$ in the maximal interval of definition of $\widetilde x_{\gamma,\overline r}$. 

Now, for all $\nin$ let $\lambda_n\in\Lambda$ be such that $\lim_{t\to\infty} \omega_n(t)=\lambda_n$. Consequently, $\lim_{n\to\infty}\lambda_n=\lambda_\infty$. 
Moreover, denoted by $x^s_{\lambda_n}$ the stable hyperbolic equilibrium of ~\eqref{eq:parametric-ode}$_{\lambda_n}$ such that $\lim_{t\to\infty} \widetilde x_{\omega_n,\overline r}(t)= x^s_{\lambda_n}$, since $\overline r <r^*_{\omega_n}$, we have that 
\[
\lim_{n\to\infty}x^s_{\lambda_n}=\lim_{n\to\infty}h_{\lambda_n}(x^s_0)=h_{\lambda_\infty}(x^s_0)=x_{\lambda_\infty}^s.
\]
Therefore, the sequence $(\widetilde x_{\omega_n,\overline r})_{n\in\N}$ is in fact uniformly bounded on $\R$ and thus it must be $\beta_{\gamma, \overline r}=\infty$, and furthermore,  $\widetilde x_{\gamma,\overline r}$ is bounded. 
Now, since the $\omega$-limit set of $\widetilde x_{\gamma,\overline r}$ is nonempty, thanks to the cocycle property, the continuity of the skew-product flow induced by~\eqref{eq:init-probA}$_{\overline r}$ (see~\cite{sell1971topological}), and the fact that the dynamics of ~\eqref{eq:parametric-ode}$_{\lambda_\infty}$ is completely determined by hyperbolic fixed points, there is a fixed point $\overline x_{\lambda_\infty}$ of ~\eqref{eq:parametric-ode}$_{\lambda_\infty}$ such that $\lim_{t\to\infty}\widetilde x_{\gamma,\overline r}(t)=\overline x_{\lambda_\infty}$.
Moreover, by assumption, it must be $|\overline x_{\lambda_\infty}- x_{\lambda_\infty}^s|>\ep>0$. On the other hand, notice that, for the $\ep>0$ given by the assumptions, since $| \gamma(\overline r t)-\lambda_\infty|$ tends to zero as $t\to\infty$, there are $t_1=t_1(\gamma, \overline r, \ep)>0$, and due to the persistence of hyperbolic solutions (see Theorem 3.8 in~\cite{potzsche2011nonautonomous}),  a solution  $ \overline\varphi$ of~\eqref{eq:init-probA}$_{\overline r}$ such that $ \overline \varphi$ is defined at least in $[t_1,\infty)$, and $\|\overline x_{\lambda_\infty} -  \overline \varphi\|_{\mathcal{C}([t_1,\infty),\R^N)}<\ep/5$. 
Furthermore, since $\lim_{t\to\infty}\widetilde x_{\gamma,\overline r}(t)=\overline x_{\lambda_\infty}$, there is $t_2=t_2(\gamma, \overline r, \ep)\ge t_1>0$ such that $\|\widetilde x_{\gamma,\overline r}- \overline \varphi\|_{\mathcal{C}([t_2,\infty),\R^N)}<\ep/5$. Now, consider $s>\max\{t_2, T(\overline r)\}$. Thanks to the continuous variation of the solutions, $|\widetilde x_{\omega_n,\overline r}(s)-\widetilde x_{\gamma,\overline r}(s)|<\ep/5$ for all $n\in\N$ sufficiently big. However, again for the persistence of the hyperbolic solutions, there is $n_0\in\N$ such that for all $n>n_0$ a solution $\overline \psi_n$ of $\dot x= f(x, \omega_n(\overline r t))$ exists which is defined at least in $[t_2,\infty)$ and $\|\overline \psi_n- \overline \varphi\|_{\mathcal{C}([t_2,\infty),\R^N)}<\ep/5$ and $\lim_{t\to\infty}|\widetilde x_{\omega_n,\overline r}(t)- \overline \psi_n(t)|=0$. However, at least for one $n>n_0$ this is in contradiction with the fact that $\lim_{t\to\infty}\widetilde x_{\omega_n,\overline r}(t)=x^s_{\lambda_n}$ because for all $t>\max\{t_2, T(\overline r)\}$
\[
\begin{split}
    0<\ep<|\overline x_{\lambda_\infty}-x^s_{\lambda_\infty}|<&
    |\overline x_{\lambda_\infty}-\overline \phi(t)|+|\overline \phi(t)-\overline\psi_n(t)|+\\
    +&|\overline\psi_n(t)-\widetilde x_{\omega_n,\overline r}(t)|+|\widetilde x_{\omega_n,\overline r}(t)-x^s_{\lambda_n}|+|x^s_{\lambda_n}-x^s_{\lambda_\infty}|<\ep.
\end{split}
\]
Therefore, \textbf{1} must be true.
\par\smallskip

In order to prove \textbf{2}, we shall reason similarly; once more by contradiction. Assume that there is $\overline r <r^*_{\gamma}$ and a sequence $(\omega_n)_\nin$ in $\Omega(\gamma, t_0)$ such that $\|\gamma-\omega_n\|_{\mathcal{C}(\R,\Lambda)}<1/n$ and $\overline r >r^*_{\omega_n}$ for all $n\in\N$. 
Therefore, thanks to Kamke's lemma (see Sell~\cite{sell1971topological}) the sequence of solutions $(\widetilde x_{\omega_n,\overline r})_{n\in\N}$ converges, up to a sub-sequence, to $\widetilde x_{\gamma,\overline r}$  uniformly on compact intervals contained in the maximal interval of definition of the latter.
Additionally, as for \textbf{1}, for the $\ep>0$ given by the assumptions, since $| \gamma(\overline r t)-\lambda_\infty|$ tends to zero as $t\to\infty$, there are $t_1=t_1(\gamma, \overline r, \ep)>0$ and a solution  $ \overline\varphi$ of~\eqref{eq:init-probA}$_{\overline r}$ such that $ \overline \varphi$ is defined at least in $[t_1,\infty)$, and $\| x_{\lambda_\infty}^s -  \overline \varphi\|_{\mathcal{C}([t_1,\infty),\R^N)}<\ep/3$. Moreover, since $\overline r <r^*_{\gamma}$, there is also $t_2\ge t_1$ such that $\|\widetilde x_{\gamma,\overline r}-\overline \varphi\|_{\mathcal{C}([t_2,\infty),\R^N)}<\ep/3$. 
On the other hand, the persistence of hyperbolic solutions guarantees also that there is $n_0\in\N$ such that for all $n>n_0$ a solution $\overline \psi_n$ of $\dot x= f(x, \omega_n(\overline r t))$ exists which is defined at least in $[t_2,\infty)$ and $\|\overline \psi_n- \overline \varphi\|_{\mathcal{C}([t_2,\infty),\R^N)}<\ep/3$. In particular, thanks to the assumptions, for all $n>n_0$, it must be $\lim_{t\to\infty}|\overline \psi_n(t)- x_{\lambda_n}^s|=0$, where $x_{\lambda_n}^s=h_{\lambda_n}(x_{\lambda_\infty}^s)$. Now, consider $t>t_2$. From the continuous variation of the solutions, we obtain the contradiction. Indeed, there must be at least one $n>n_0$ such that 
\[
0\le |\widetilde x_{\omega_n,\overline r}(t)-\overline \psi_n(t)|\le |\widetilde x_{\omega_n,\overline r}(t)- \widetilde x_{\gamma,\overline r}(t)|+|\widetilde x_{\gamma,\overline r}(t)-\overline \varphi(t)|+|\overline \varphi(t)-\overline \psi_n(t)|,
\]
and the left-hand side can be made smaller than $\ep$ by choosing $n$ sufficiently large. In this case, however, the assumptions guarantee that $\lim_{t\to\infty}|\widetilde x_{\omega_n,\overline r}(t)- x_{\lambda_n}^s|=0$ which is against the fact that  $\overline r >r^*_{\omega_n}$ for all $n\in\N$. Therefore,we obtain the thesis. 
\qed

\subsection*{Acknowledgments}
We thank an anonymous referee for her/his comments and suggestions, which have been included in the current version of the paper.

\bibliographystyle{siam}
\addcontentsline{toc}{chapter}{Bibliography}
\bibliography{main}

\begin{thebibliography}{100}

\bibitem{adamson2020}
{\sc M.~W. Adamson, J.~H. Dawes, A.~Hastings, and F.~M. Hilker}, {\em
  Forecasting resilience profiles of the run-up to regime shifts in
  nearly-one-dimensional systems}, Journal of the Royal Society Interface, 17
  (2020), p.~20200566.

\bibitem{alkhayuon2018rate}
{\sc H.~M. Alkhayuon and P.~Ashwin}, {\em Rate-induced tipping from periodic
  attractors: Partial tipping and connecting orbits}, Chaos: An
  Interdisciplinary Journal of Nonlinear Science, 28 (2018), p.~033608.

\bibitem{apkarian2020optimizing}
{\sc P.~Apkarian and D.~Noll}, {\em Optimizing the kreiss constant}, SIAM
  Journal on Control and Optimization, 58 (2020), pp.~3342--3362.

\bibitem{arnoldi2016}
{\sc J.-F. Arnoldi, M.~Loreau, and B.~Haegeman}, {\em Resilience, reactivity
  and variability: A mathematical comparison of ecological stability measures},
  Journal of theoretical biology, 389 (2016), pp.~47--59.

\bibitem{ashwin2017parameter}
{\sc P.~Ashwin, C.~Perryman, and S.~Wieczorek}, {\em Parameter shifts for
  nonautonomous systems in low dimension: bifurcation-and rate-induced
  tipping}, Nonlinearity, 30 (2017), p.~2185.

\bibitem{baggio2015}
{\sc J.~A. Baggio, K.~Brown, and D.~Hellebrandt}, {\em Boundary object or
  bridging concept? a citation network analysis of resilience}, Ecology and
  Society, 20 (2015).

\bibitem{bastiaansen2020effect}
{\sc R.~Bastiaansen, A.~Doelman, M.~B. Eppinga, and M.~Rietkerk}, {\em The
  effect of climate change on the resilience of ecosystems with adaptive
  spatial pattern formation}, Ecology letters, 23 (2020), pp.~414--429.

\bibitem{beddington1976}
{\sc J.~Beddington, C.~Free, and J.~Lawton}, {\em Concepts of stability and
  resilience in predator-prey models}, The Journal of Animal Ecology,  (1976),
  pp.~791--816.

\bibitem{beisner2003}
{\sc B.~E. Beisner, C.~L. Dent, and S.~R. Carpenter}, {\em Variability of lakes
  on the landscape: roles of phosphorus, food webs, and dissolved organic
  carbon}, Ecology, 84 (2003), pp.~1563--1575.

\bibitem{berglund2006noise}
{\sc N.~Berglund and B.~Gentz}, {\em Noise-induced phenomena in slow-fast
  dynamical systems: a sample-paths approach}, Springer Science \& Business
  Media, 2006.

\bibitem{brand2007}
{\sc F.~S. Brand and K.~Jax}, {\em Focusing the meaning (s) of resilience:
  resilience as a descriptive concept and a boundary object}, Ecology and
  society, 12 (2007).

\bibitem{briguglio2009economic}
{\sc L.~Briguglio, G.~Cordina, N.~Farrugia, and S.~Vella}, {\em Economic
  vulnerability and resilience: concepts and measurements}, Oxford development
  studies, 37 (2009), pp.~229--247.

\bibitem{carpenter1999}
{\sc S.~R. Carpenter, D.~Ludwig, and W.~A. Brock}, {\em Management of
  eutrophication for lakes subject to potentially irreversible change},
  Ecological Applications, 9 (1999), pp.~751--771.

\bibitem{carpenter2001}
{\sc S.~R. Carpenter, B.~Walker, J.~M. Anderies, and N.~Abel}, {\em From
  metaphor to measurement: resilience of what to what?}, Ecosystems, 4 (2001),
  pp.~765--781.

\bibitem{carvalho2012attractors}
{\sc A.~Carvalho, J.~A. Langa, and J.~Robinson}, {\em Attractors for
  infinite-dimensional non-autonomous dynamical systems}, vol.~182, Springer
  Science \& Business Media, 2012.

\bibitem{coddington1955}
{\sc E.~A. Coddington and N.~Levinson}, {\em Theory of Ordinary Differential
  Equations}, Tata McGraw-Hill Education, 1955.

\bibitem{conley1988gradient}
{\sc C.~Conley}, {\em The gradient structure of a flow: I}, Ergodic Theory and
  Dynamical Systems, 8 (1988), pp.~11--26.

\bibitem{coppel2006dichotomies}
{\sc W.~A. Coppel}, {\em Dichotomies in stability theory}, vol.~629, Springer,
  2006.

\bibitem{cottingham1994}
{\sc K.~L. Cottingham and S.~R. Carpenter}, {\em Predictive indices of
  ecosystem resilience in models of north temperate lakes}, Ecology, 75 (1994),
  pp.~2127--2138.

\bibitem{courchamp2008}
{\sc F.~Courchamp, L.~Berec, and J.~Gascoigne}, {\em Allee Effects in Ecology
  and Conservation}, Oxford University Press, 2008.

\bibitem{dakos2015resilience}
{\sc V.~Dakos, S.~R. Carpenter, E.~H. van Nes, and M.~Scheffer}, {\em
  Resilience indicators: prospects and limitations for early warnings of regime
  shifts}, Philosophical Transactions of the Royal Society B: Biological
  Sciences, 370 (2015), p.~20130263.

\bibitem{deangelis1989}
{\sc D.~Deangelis, S.~Bartell, and A.~Brenkert}, {\em Effects of nutrient
  recycling and food-chain length on resilience}, American Naturalist, 134
  (1989), pp.~778--805.

\bibitem{deangelis1989b}
{\sc D.~L. DeAngelis, P.~J. Mulholland, A.~V. Palumbo, A.~D. Steinman, M.~A.
  Huston, and J.~W. Elwood}, {\em Nutrient dynamics and food-web stability},
  Annual Review of Ecology and Systematics, 20 (1989), pp.~71--95.

\bibitem{dennis2016allee}
{\sc B.~Dennis, L.~Assas, S.~Elaydi, E.~Kwessi, and G.~Livadiotis}, {\em Allee
  effects and resilience in stochastic populations}, Theoretical Ecology, 9
  (2016), pp.~323--335.

\bibitem{dobson1993}
{\sc I.~Dobson}, {\em Computing a closest bifurcation instability in
  multidimensional parameter space}, Journal of nonlinear science, 3 (1993),
  pp.~307--327.

\bibitem{donohue2016}
{\sc I.~Donohue, H.~Hillebrand, J.~M. Montoya, O.~L. Petchey, S.~L. Pimm, M.~S.
  Fowler, K.~Healy, A.~L. Jackson, M.~Lurgi, D.~McClean, et~al.}, {\em
  Navigating the complexity of ecological stability}, Ecology letters, 19
  (2016), pp.~1172--1185.

\bibitem{embree2005spectra}
{\sc M.~Embree and L.~N. Trefethen}, {\em Spectra and pseudospectra: The
  behavior of nonnormal matrices and operators}, 2005.

\bibitem{evans2000}
{\sc M.~Evans and T.~Swartz}, {\em Approximating Integrals via Monte Carlo and
  Deterministic Methods}, vol.~20, OUP Oxford, 2000.

\bibitem{fassoni2017}
{\sc A.~C. Fassoni and H.~M. Yang}, {\em An ecological resilience perspective
  on cancer: insights from a toy model}, Ecological Complexity, 30 (2017),
  pp.~34--46.

\bibitem{forgoston2018primer}
{\sc E.~Forgoston and R.~O. Moore}, {\em A primer on noise-induced transitions
  in applied dynamical systems}, SIAM Review, 60 (2018), pp.~969--1009.

\bibitem{gardiner1985handbook}
{\sc C.~W. Gardiner et~al.}, {\em Handbook of stochastic methods}, vol.~3,
  springer Berlin, 1985.

\bibitem{gladwell2006tipping}
{\sc M.~Gladwell}, {\em The tipping point: How little things can make a big
  difference}, Little, Brown, 2006.

\bibitem{grimm1997}
{\sc V.~Grimm and C.~Wissel}, {\em Babel, or the ecological stability
  discussions: an inventory and analysis of terminology and a guide for
  avoiding confusion}, Oecologia, 109 (1997), pp.~323--334.

\bibitem{gruemm1976}
{\sc H.-R. Gruemm}, {\em Definitions of resilience}, International Institute
  for Applied Systems Analysis research report RR-76-005,  (1976).

\bibitem{gunderson2000}
{\sc L.~H. Gunderson}, {\em Ecological resilience—in theory and application},
  Annual review of ecology and systematics, 31 (2000), pp.~425--439.

\bibitem{hale1980ordinary}
{\sc J.~K. Hale}, {\em Ordinary differential equations, re krieger pub}, Co.,
  Huntington, NY, 46 (1980).

\bibitem{harrison1979}
{\sc G.~W. Harrison}, {\em Stability under environmental stress: resistance,
  resilience, persistence, and variability}, The American Naturalist, 113
  (1979), pp.~659--669.

\bibitem{harte1979ecosystem}
{\sc J.~Harte}, {\em Ecosystem stability and the distribution of community
  matrix eigenvalues}, in Theoretical Systems Ecology. Advances and Case
  Studies, Academic Press New York, 1979, pp.~453--466.

\bibitem{harwell1979}
{\sc M.~A. Harwell and H.~L. Ragsdale}, {\em Eigengroup analyses of linear
  ecosystem models}, Ecological Modelling, 7 (1979), pp.~239 -- 255.

\bibitem{holling1973}
{\sc C.~S. Holling}, {\em Resilience and stability of ecological systems},
  Annual review of ecology and systematics, 4 (1973), pp.~1--23.

\bibitem{holling1996}
{\sc C.~S. Holling}, {\em Engineering resilience versus ecological resilience},
  Engineering within ecological constraints, 31 (1996), p.~32.

\bibitem{horn2012matrix}
{\sc R.~A. Horn and C.~R. Johnson}, {\em Matrix analysis}, Cambridge university
  press, 2012.

\bibitem{ives2007}
{\sc A.~R. Ives and S.~R. Carpenter}, {\em Stability and diversity of
  ecosystems}, science, 317 (2007), pp.~58--62.

\bibitem{johnson2016nonautonomous}
{\sc R.~Johnson, R.~Obaya, S.~Novo, C.~N{\'u}{\~n}ez, and R.~Fabbri}, {\em
  Nonautonomous Linear Hamiltonian Systems: Oscillation, Spectral Theory and
  Control}, Springer, 2016.

\bibitem{kaszas2019tipping}
{\sc B.~Kasz{\'a}s, U.~Feudel, and T.~T{\'e}l}, {\em Tipping phenomena in
  typical dynamical systems subjected to parameter drift}, Scientific reports,
  9 (2019), pp.~1--12.

\bibitem{kefi2019}
{\sc S.~K{\'e}fi, V.~Dom{\'\i}nguez-Garc{\'\i}a, I.~Donohue, C.~Fontaine,
  E.~Th{\'e}bault, and V.~Dakos}, {\em Advancing our understanding of
  ecological stability}, Ecology letters, 22 (2019), pp.~1349--1356.

\bibitem{kerswell2014}
{\sc R.~Kerswell, C.~C. Pringle, and A.~Willis}, {\em An optimization approach
  for analysing nonlinear stability with transition to turbulence in fluids as
  an exemplar}, Reports on Progress in Physics, 77 (2014), p.~085901.

\bibitem{klinshov2015}
{\sc V.~V. Klinshov, V.~I. Nekorkin, and J.~Kurths}, {\em Stability threshold
  approach for complex dynamical systems}, New Journal of Physics, 18 (2015),
  p.~013004.

\bibitem{kloeden2013nonautonomous}
{\sc P.~E. Kloeden and C.~P{\"o}tzsche}, {\em Nonautonomous dynamical systems
  in the life sciences}, in Nonautonomous dynamical systems in the life
  sciences, Springer, 2013, pp.~3--39.

\bibitem{kloeden2011}
{\sc P.~E. Kloeden and M.~Rasmussen}, {\em Nonautonomous Dynamical Systems},
  no.~176, American Mathematical Soc., 2011.

\bibitem{kuehn2011mathematical}
{\sc C.~Kuehn}, {\em A mathematical framework for critical transitions:
  Bifurcations, fast--slow systems and stochastic dynamics}, Physica D:
  Nonlinear Phenomena, 240 (2011), pp.~1020--1035.

\bibitem{kuehn2013mathematical}
\leavevmode\vrule height 2pt depth -1.6pt width 23pt, {\em A mathematical
  framework for critical transitions: normal forms, variance and applications},
  Journal of Nonlinear Science, 23 (2013), pp.~457--510.

\bibitem{kuehn2022estimating}
{\sc C.~Kuehn and I.~P. Longo}, {\em Estimating rate-induced tipping via
  asymptotic series and a melnikov-like method}, Nonlinearity, 35 (2022),
  p.~2559.

\bibitem{kuehn2021uncertainty}
{\sc C.~Kuehn and K.~Lux}, {\em Uncertainty quantification of bifurcations in
  random ordinary differential equations}, SIAM Journal on Applied Dynamical
  Systems, 20 (2021), pp.~2295--2334.

\bibitem{kuznetsov2013elements}
{\sc Y.~A. Kuznetsov}, {\em Elements of applied bifurcation theory}, vol.~112,
  Springer Science \& Business Media, 2013.

\bibitem{longo2018topologies}
{\sc I.~P. Longo}, {\em Topologies of continuity for Carath{\'e}odory
  differential equations with applications in non-autonomous dynamics}, PhD
  thesis, 2018.

\bibitem{longo2017topologies}
{\sc I.~P. Longo, S.~Novo, and R.~Obaya}, {\em Topologies of {$L_{loc}^p$} type
  for {Carathéodory} functions with applications in non-autonomous
  differential equations}, Journal of Differential Equations, 263 (2017),
  pp.~7187--7220.

\bibitem{longo2019weak}
\leavevmode\vrule height 2pt depth -1.6pt width 23pt, {\em Weak topologies for
  {Carathéodory} differential equations: continuous dependence, exponential
  dichotomy and attractors}, Journal of Dynamics and Differential Equations, 31
  (2019), pp.~1617--1651.

\bibitem{longo2021critical}
{\sc I.~P. Longo, C.~N{\'u}{\~n}ez, and R.~Obaya}, {\em Critical transitions in
  piecewise uniformly continuous concave quadratic ordinary differential
  equations}, arXiv preprint arXiv:2110.10145,  (2021).

\bibitem{longo2021rate}
{\sc I.~P. Longo, C.~N{\'u}nez, R.~Obaya, and M.~Rasmussen}, {\em Rate-induced
  tipping and saddle-node bifurcation for quadratic differential equations with
  nonautonomous asymptotic dynamics}, SIAM Journal on Applied Dynamical
  Systems, 20 (2021), pp.~500--540.

\bibitem{lundstrom2018}
{\sc N.~L. Lundstr{\"o}m}, {\em How to find simple nonlocal stability and
  resilience measures}, Nonlinear dynamics, 93 (2018), pp.~887--908.

\bibitem{lundstrom2007}
{\sc N.~L. Lundström and J.-O. Aidanpää}, {\em Dynamic consequences of
  electromagnetic pull due to deviations in generator shape}, Journal of Sound
  and Vibration, 301 (2007), pp.~207--225.

\bibitem{lux2021uniting}
{\sc K.~Lux, P.~Ashwin, R.~Wood, and C.~Kuehn}, {\em Uniting parametric
  uncertainty and tipping diagrams}, arXiv preprint arXiv:2110.15859,  (2021).

\bibitem{may2019}
{\sc R.~May}, {\em Stability and Complexity in Model Ecosystems}, Princeton
  Landmarks in Biology, Princeton University Press, 2019.

\bibitem{mcgehee1988}
{\sc R.~P. McGehee}, {\em Some metric properties of attractors with
  applications to computer simulations of dynamical systems}, preprint, 290
  (1988).

\bibitem{menck2014}
{\sc P.~J. Menck, J.~Heitzig, J.~Kurths, and H.~J. Schellnhuber}, {\em How dead
  ends undermine power grid stability}, Nature communications, 5 (2014),
  p.~3969.

\bibitem{menck2013}
{\sc P.~J. Menck, J.~Heitzig, N.~Marwan, and J.~Kurths}, {\em \textit{How basin
  stability complements the linear-stability paradigm}}, Nature physics, 9
  (2013), p.~89.

\bibitem{meyer2016}
{\sc K.~Meyer}, {\em A mathematical review of resilience in ecology}, Natural
  Resource Modeling, 29 (2016), pp.~339--352.

\bibitem{meyer2019}
{\sc K.~Meyer}, {\em Metric properties of attractors for vector fields via
  bounded, nonautonomous control.}, Ph.D. thesis, (2019),  (2019).

\bibitem{meyer2018}
{\sc K.~Meyer, A.~Hoyer-Leitzel, S.~Iams, I.~Klasky, V.~Lee, S.~Ligtenberg,
  E.~Bussmann, and M.~L. Zeeman}, {\em Quantifying resilience to recurrent
  ecosystem disturbances using flow--kick dynamics}, Nature Sustainability, 1
  (2018), p.~671.

\bibitem{meyer2020intensity}
{\sc K.~J. Meyer and R.~P. McGehee}, {\em Intensity---a metric approach to
  quantifying attractor robustness in odes}, arXiv preprint arXiv:2012.10786,
  (2020).

\bibitem{milnor1985}
{\sc J.~Milnor}, {\em On the concept of attractor}, in The Theory of Chaotic
  Attractors, Springer, 1985, pp.~243--264.

\bibitem{mitra2017}
{\sc C.~Mitra, A.~Choudhary, S.~Sinha, J.~Kurths, and R.~V. Donner}, {\em
  Multiple-node basin stability in complex dynamical networks}, Physical Review
  E, 95 (2017), p.~032317.

\bibitem{mitra2015}
{\sc C.~Mitra, J.~Kurths, and R.~Donner}, {\em An integrative quantifier of
  multistability in complex systems based on ecological resilience}, Scientific
  Reports, 5 (2015).

\bibitem{neubert1997}
{\sc M.~G. Neubert and H.~Caswell}, {\em Alternatives to resilience for
  measuring the responses of ecological systems to perturbations}, Ecology, 78
  (1997), pp.~653--665.

\bibitem{oneill1976}
{\sc R.~O'Neill}, {\em Ecosystem persistence and heterotrophic regulation},
  Ecology, 57 (1976), pp.~1244--1253.

\bibitem{orians1975}
{\sc G.~H. Orians}, {\em Diversity, stability and maturity in natural
  ecosystems}, in Unifying concepts in ecology, Springer, 1975, pp.~139--150.

\bibitem{peterson1998}
{\sc G.~Peterson, C.~R. Allen, and C.~S. Holling}, {\em Ecological resilience,
  biodiversity, and scale}, Ecosystems, 1 (1998), pp.~6--18.

\bibitem{pimm1977}
{\sc S.~Pimm and J.~Lawton}, {\em Number of trophic levels in ecological
  communities}, Nature, 268 (1977), p.~329.

\bibitem{pimm1978}
{\sc S.~Pimm and J.~H. Lawton}, {\em On feeding on more than one trophic
  level}, Nature, 275 (1978), pp.~542--544.

\bibitem{pimm1984}
{\sc S.~L. Pimm}, {\em The complexity and stability of ecosystems}, Nature, 307
  (1984), p.~321.

\bibitem{potzsche2011nonautonomous}
{\sc C.~P{\"o}tzsche}, {\em Nonautonomous continuation of bounded solutions},
  Commun. Pure Appl. Anal, 10 (2011), pp.~937--961.

\bibitem{ratajczak2017}
{\sc Z.~Ratajczak, P.~D'Odorico, S.~L. Collins, B.~T. Bestelmeyer, F.~I.
  Isbell, and J.~B. Nippert}, {\em The interactive effects of press/pulse
  intensity and duration on regime shifts at multiple scales}, Ecological
  Monographs, 87 (2017), pp.~198--218.

\bibitem{reyer2015forest}
{\sc C.~P. Reyer, N.~Brouwers, A.~Rammig, B.~W. Brook, J.~Epila, R.~F. Grant,
  M.~Holmgren, F.~Langerwisch, S.~Leuzinger, W.~Lucht, et~al.}, {\em Forest
  resilience and tipping points at different spatio-temporal scales: Approaches
  and challenges}, Journal of Ecology, 103 (2015), pp.~5--15.

\bibitem{rooney2006}
{\sc N.~Rooney, K.~McCann, G.~Gellner, and J.~C. Moore}, {\em Structural
  asymmetry and the stability of diverse food webs}, Nature, 442 (2006),
  pp.~265--269.

\bibitem{iianalysing}
{\sc L.~Russo and M.~D’Errico}, {\em Analysing Resilience for better
  targeting and action, FAO Resilience Analysis No. 8}, The Food and
  Agriculture Organization of the United Nations (FAO), 2017.

\bibitem{sacker1978spectral}
{\sc R.~J. Sacker and G.~R. Sell}, {\em A spectral theory for linear
  differential systems}, Journal of Differential Equations, 27 (1978),
  pp.~320--358.

\bibitem{scheffer2020critical}
{\sc M.~Scheffer}, {\em Critical transitions in nature and society}, vol.~16,
  Princeton University Press, 2020.

\bibitem{scheffer2009}
{\sc M.~Scheffer, J.~Bascompte, W.~A. Brock, V.~Brovkin, S.~R. Carpenter,
  V.~Dakos, H.~Held, E.~H. Van~Nes, M.~Rietkerk, and G.~Sugihara}, {\em
  Early-warning signals for critical transitions}, Nature, 461 (2009),
  pp.~53--59.

\bibitem{schmid2007nonmodal}
{\sc P.~J. Schmid}, {\em Nonmodal stability theory}, Annu. Rev. Fluid Mech., 39
  (2007), pp.~129--162.

\bibitem{schultz2017}
{\sc P.~Schultz, P.~J. Menck, J.~Heitzig, and J.~Kurths}, {\em Potentials and
  limits to basin stability estimation}, New Journal of Physics, 19 (2017),
  p.~023005.

\bibitem{sell1971topological}
{\sc G.~R. Sell}, {\em Topological dynamics and ordinary differential
  equations},  (1971).

\bibitem{slyman2022rate}
{\sc K.~Slyman and C.~K. Jones}, {\em Rate and noise-induced tipping working in
  concert}, submitted,  (2022).

\bibitem{smith1968mathematical}
{\sc J.~M. Smith}, {\em Mathematical ideas in biology}, CUP Archive, 1968.

\bibitem{standish2014}
{\sc R.~J. Standish, R.~J. Hobbs, M.~M. Mayfield, B.~T. Bestelmeyer, K.~N.
  Suding, L.~L. Battaglia, V.~Eviner, C.~V. Hawkes, V.~M. Temperton, V.~A.
  Cramer, et~al.}, {\em Resilience in ecology: Abstraction, distraction, or
  where the action is?}, Biological Conservation, 177 (2014), pp.~43--51.

\bibitem{strogatz2018nonlinear}
{\sc S.~H. Strogatz}, {\em Nonlinear dynamics and chaos with student solutions
  manual: With applications to physics, biology, chemistry, and engineering},
  CRC press, 2018.

\bibitem{tamberg2022}
{\sc L.~A. Tamberg, J.~Heitzig, and J.~F. Donges}, {\em A modeler's guide to
  studying the resilience of social-technical-environmental systems},
  Environmental Research Letters, 17 (2022), p.~055005.

\bibitem{tikhonov1952systems}
{\sc A.~N. Tikhonov}, {\em Systems of differential equations containing small
  parameters in the derivatives}, Matematicheskii sbornik, 73 (1952),
  pp.~575--586.

\bibitem{trefethen1997pseudospectra}
{\sc L.~N. Trefethen}, {\em Pseudospectra of linear operators}, SIAM review, 39
  (1997), pp.~383--406.

\bibitem{vanmeerbek2021}
{\sc K.~Van~Meerbeek, T.~Jucker, and J.-C. Svenning}, {\em Unifying the
  concepts of stability and resilience in ecology}, Journal of Ecology, 109
  (2021), pp.~3114--3132.

\bibitem{van2007}
{\sc E.~H. Van~Nes and M.~Scheffer}, {\em Slow recovery from perturbations as a
  generic indicator of a nearby catastrophic shift}, The American Naturalist,
  169 (2007), pp.~738--747.

\bibitem{vincent1979}
{\sc T.~L. Vincent and L.~R. Anderson}, {\em Return time and vulnerability for
  a food chain model}, Theoretical Population Biology, 15 (1979), pp.~217 --
  231.

\bibitem{walker2004}
{\sc B.~Walker, C.~S. Holling, S.~R. Carpenter, and A.~Kinzig}, {\em
  \textit{Resilience, adaptability and transformability in social--ecological
  systems}}, Ecology and society, 9 (2004).

\bibitem{walker2012}
{\sc B.~Walker and D.~Salt}, {\em Resilience thinking: sustaining ecosystems
  and people in a changing world}, Island press, 2012.

\bibitem{webster1975}
{\sc J.~R. Webster, J.~B. Waide, and B.~C. Patten}, {\em Nutrient recycling and
  the stability of ecosystems}, in Mineral Cycling in Southeastern Ecosystems;
  Proceedings of a Symposium, 1975.

\bibitem{wieczorek2011excitability}
{\sc S.~Wieczorek, P.~Ashwin, C.~M. Luke, and P.~M. Cox}, {\em Excitability in
  ramped systems: the compost-bomb instability}, Proceedings of the Royal
  Society A: Mathematical, Physical and Engineering Sciences, 467 (2011),
  pp.~1243--1269.

\bibitem{wiley2006}
{\sc D.~A. Wiley, S.~H. Strogatz, and M.~Girvan}, {\em The size of the sync
  basin}, Chaos: An Interdisciplinary Journal of Nonlinear Science, 16 (2006),
  p.~015103.

\end{thebibliography}

\end{document}